\numberwithin{equation}{section}
\definecolor{ForestGreen}{rgb}{0.1,0.6,0.05}
\definecolor{EgyptBlue}{rgb}{0.063,0.1,0.6}
\newtheorem{thm}{Theorem}[section]
\newtheorem{lemma}[thm]{Lemma}
\newtheorem{prop}[thm]{Proposition}
\newtheorem{cor}[thm]{Corollary}
\theoremstyle{definition}
\newtheorem{remark}[thm]{Remark}
\DeclareRobustCommand\nlab{\lambda}
\DeclareRobustCommand\nlabb{\lambda}
\title{
	\vspace*{-1cm}
	Estimates on the spectral interval of validity \\of the anti-maximum principle
	\\ \medskip}
\author[1]{Vladimir Bobkov\thanks{E-mail: \texttt{bobkov@kma.zcu.cz}}}
\author[1]{Pavel Dr\'abek\thanks{E-mail: \texttt{pdrabek@kma.zcu.cz}}}
\affil[1]{{\small Department of Mathematics and NTIS, 
		Faculty of Applied Sciences, 
		University of West Bohemia, 
		Univerzitn\'i 8, 301 00 
		Plze\v{n}, Czech Republic}}
\author[2]{Yavdat Ilyasov\thanks{E-mail: \texttt{ilyasov02@gmail.com}}}
\affil[2]{{\small Institute of Mathematics, 
		Ufa Scientific Center, 
		Russian Academy of Sciences, 
		112, Chernyshevsky str., 
		450008 Ufa, Russia}}
\date{}
\begin{document}
\maketitle

\begin{abstract}
	The anti-maximum principle for the homogeneous Dirichlet problem to $-\Delta_p u = \lambda |u|^{p-2}u + f(x)$ with positive $f \in L^\infty(\Omega)$ states the existence of a critical value $\lambda_f > \lambda_1$ such that any solution of this problem with $\lambda \in (\lambda_1, \lambda_f)$ is strictly negative. 
	In this paper, we give a variational upper bound for $\lambda_f$ and study its properties. 
	As an important supplementary result, we investigate the branch of ground state solutions of the considered boundary value problem in $(\lambda_1,\lambda_2)$.

	\par
	\smallskip
	\noindent {\bf  Keywords}: anti-maximum principle, maximum principle, $p$-Laplacian, ground state, nodal solutions.
	
	\par
	\smallskip
	\noindent {\bf  MSC2010}: 
	35B50, 	
	35B09,	
	35B30,	
	35B38.	
\end{abstract}

\section{Introduction}

Consider the problem
\begin{equation}\label{D}
\tag{$\mathcal{D}_{\nlab}$}
\left\{
\begin{aligned}
-\Delta_p u &= \lambda |u|^{p-2} u + f(x) &&\text{in } \Omega,\\
u&=0 &&\text{on } \partial \Omega,
\end{aligned}
\right.
\end{equation}
where $\Delta_p u := \text{div}\left(|\nabla u|^{p-2} \nabla u\right)$, $p>1$, $\lambda \in \mathbb{R}$ is a spectral parameter, and $\Omega \subset \mathbb{R}^N$ is a bounded domain of class $C^{1,\delta}$, $\delta \in (0,1)$, with boundary $\partial \Omega$, $N \geq 1$. 
Without mentioning otherwise, we always assume that $f \in L^\infty(\Omega)$, $f \geq 0$, and $f \not\equiv 0$. 
The problem \eqref{D} is a perturbation of the nonlinear eigenvalue problem
\begin{equation}\label{E}
\left\{
\begin{aligned}
-\Delta_p u &= \lambda |u|^{p-2} u &&\text{in } \Omega,\\
u&=0 &&\text{on } \partial \Omega.
\end{aligned}
\right.
\end{equation}
Below, we will often employ the \textit{first} and \textit{second} eigenvalue of \eqref{E} which can be defined, respectively, as
$$
\lambda_1 := 
\inf\left\{
\frac{\int_\Omega |\nabla u|^p \, dx}{\int_\Omega |u|^p \, dx}:~
u \in W_0^{1,p}(\Omega) \setminus \{0\}
\right\}
$$
and
$$
\lambda_2 
:= 
\inf
\left\{
\lambda > \lambda_1:~ 
\lambda \text{ is an eigenvalue of } \eqref{E}
\right\}.
$$
Recall that $0 < \lambda_1 < \lambda_2$, \cite{anane1987}, the first eigenfunction $\varphi_1$ is unique modulo scaling, \cite{AH,Lind}, and $\varphi_1$ can be chosen strictly positive in $\Omega$, \cite{vaz}. Any second eigenfunction $\varphi_2$ has exactly two nodal domains, \cite{CDG,DR}. In particular, $\varphi_2 = \varphi_2^+ + \varphi_2^-$, where $\varphi_2^+ := \max\{\varphi_2, 0\} \not\equiv 0$ and $\varphi_2^- := \min\{\varphi_2, 0\} \not\equiv 0$. Moreover, any eigenfunction of \eqref{E}, as well as any weak solution of \eqref{D}, obeys $C^{1,\gamma}(\overline{\Omega})$-regularity for some $\gamma \in (0,1)$, \cite{anane,lieberman}.

\smallskip
Among qualitative properties of solutions of \eqref{D}, the information on a sign is of fundamental importance. 
It is well-known that the following \textit{maximum principle} is valid, \cite{FHD,vaz}: 

\begin{enumerate}[leftmargin=4em,label={$\mathcal{(MP)}$}]
	\item\label{MP}
	if $\lambda < \lambda_1$, then any solution $u$ of \eqref{D} satisfies $u>0$ in $\Omega$.
\end{enumerate}

On the other hand, it was observed by Cl\'ement \& Peletier \cite{CP} for $p=2$ and by Fleckinger et al.\ \cite{FGTT} for $p>1$ that the following \textit{anti-maximum principle} holds. Namely, there exists $\lambda_f > \lambda_1$ such that 

\begin{enumerate}[leftmargin=4em,label={$\mathcal{(AMP)}$}]
	\item\label{AMP}
	if $\lambda \in (\lambda_1, \lambda_f)$, then any solution $u$ of \eqref{D} satisfies $u < 0$ in $\Omega$.
\end{enumerate}

\noindent
We will always assume that $\lambda_f$ is the maximal value such that \ref{AMP} holds.

Although $\lambda_f>\lambda_1$ for any fixed $f$, it is known that $\lambda_f$ depends on $f$ and cannot be bounded away from $\lambda_1$ uniformly with respect to $f$, see \cite{ACG,DFG,sweers}. 
However, apart from this fact, not much is known about other properties of $\lambda_f$, and it seems that even constructive bounds for $\lambda_f$ have not been systematically studied.
We can refer only to the following lower bound obtained in \cite{FHT2014} in the case $p=2$ and $f \in L^q(\Omega)$:
$$
\lambda_f \geq \lambda_1 + \frac{K \alpha}{\left(\int_\Omega |f^\perp|^q \,dx \right)^{1/q}},
$$
where $\alpha>0$ and $f^\perp$ are defined through the $L^2(\Omega)$-orthogonal decomposition $f = \alpha \varphi_1 + f^\perp$, $q>N$ for $N \geq 2$ and $q=2$ for $N=1$, and the constant $K$ does not depend on $f$, yet $K$ is not explicitly quantified. 
In this respect, \ref{AMP} for the analogous Neumann problem
\begin{equation}
\label{N}
\left\{
\begin{aligned}
-\Delta_p u &= \mu |u|^{p-2} u + f(x) &&\text{in } \Omega,\\
\frac{\partial u}{\partial \nu} &=0 &&\text{on } \partial \Omega,
\end{aligned}
\right.
\end{equation}
is more developed. 
Note that \ref{AMP} for \eqref{N} is valid on a maximal interval $(0,\mu_f)$, \cite{ACG,CP}. It was shown in \cite{CP} that $\mu_f \geq \mu_2/4$ when $p=2$ and $N=1$, where $\mu_2$ is the second (or, equivalently, the first nonzero) eigenvalue of the Laplace operator under zero Neumann boundary conditions.
Later, it was proved in \cite{ACG} that for $p>N$, 
$$
\mu_f > 
\inf\left\{
\frac{\int_\Omega |\nabla u|^p \, dx}{\int_\Omega |u|^p \, dx}:~
u \in W^{1,p}(\Omega) \setminus \{0\}
\text{ and }
u
\text{ vanishes on some ball in }
\Omega
\right\},
$$
and no uniform lower bound is possible provided $p \leq N$.
See \cite{GGP} and \cite{TM} for generalizations.
We also refer the reader to the survey article \cite{M} for the overview of results about \ref{AMP} for \eqref{D}, \eqref{N}, and related problems.

\medskip
The aim of this paper is to provide an explicit upper bound for the maximal value $\lambda_f$ of validity of \ref{AMP} for \eqref{D}. 
Let us define the critical value
\begin{equation}\label{eq:l*}
\lambda^*_f := \inf\left\{
\frac{\int_\Omega |\nabla u|^p \, dx}{\int_\Omega |u|^p \, dx}:~
\int_\Omega f u \, dx = 0, 
~
u \in W_0^{1,p}(\Omega) \setminus \{0\}
\right\}.
\end{equation}
\begin{thm}\label{thm:1}
	Let $p>1$. Then $\lambda_f^* \in (\lambda_1, \lambda_2]$ and the following assertions hold:
	\begin{enumerate}[label={\rm(\roman*)}]
		\item\label{thm:1:2} If $\lambda_f^* < \lambda_2$, then $\lambda_f < \lambda_f^*$.
		\item\label{thm:1:1} If $p = 2$ or $N=1$, then $\lambda_f \leq \lambda_f^*$.
	\end{enumerate}
\end{thm}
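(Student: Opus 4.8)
The plan is to combine a constrained minimization characterizing $\lambda_f^*$ with the elementary identity satisfied by every solution of \eqref{D}, reading off the sign of a solution from the sign of $\int_\Omega fu\,dx$. Throughout write $\mathcal R(u):=\int_\Omega|\nabla u|^p\,dx\,/\int_\Omega|u|^p\,dx$. First I would record the basic identity: testing \eqref{D} with $u$ gives $\int_\Omega fu\,dx=\big(\mathcal R(u)-\lambda\big)\int_\Omega|u|^p\,dx$, so for a solution $u$ one has $\int_\Omega fu\,dx\ge 0\iff \mathcal R(u)\ge\lambda$. Since $f\ge0$, $f\not\equiv0$, a strictly negative $u$ forces $\int_\Omega fu\,dx<0$; hence if \ref{AMP} holds at $\lambda$ then $\mathcal R(u)<\lambda$ for every solution, and, conversely, producing a solution with $\int_\Omega fu\,dx=0$ (equivalently $\mathcal R(u)=\lambda$) shows that \ref{AMP} fails at that $\lambda$.

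Next I would establish $\lambda_f^*\in(\lambda_1,\lambda_2]$. Existence of a minimizer $u_*$ in \eqref{eq:l*} is the direct method: the constraint $\int_\Omega fu\,dx=0$ defines a closed subspace, and by the compact embedding $W_0^{1,p}(\Omega)\hookrightarrow L^p(\Omega)$ together with weak lower semicontinuity the infimum is attained on $\{\int_\Omega|u|^p\,dx=1,\ \int_\Omega fu\,dx=0\}$. Because $\int_\Omega f\varphi_1\,dx>0$, the function $\varphi_1$ is excluded from the constraint set; since $\lambda_1$ is attained in the Rayleigh quotient only by multiples of $\varphi_1$, the minimizer satisfies $\mathcal R(u_*)>\lambda_1$, i.e.\ $\lambda_f^*>\lambda_1$. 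For the upper bound I would test \eqref{eq:l*} with $w:=s\varphi_2^++t\varphi_2^-$, $s,t\ge0$: the supports of $\varphi_2^+$ and $\varphi_2^-$ are disjoint and each $\varphi_2^\pm$ is a first eigenfunction on its nodal domain, so $\int_\Omega|\nabla\varphi_2^\pm|^p\,dx=\lambda_2\int_\Omega|\varphi_2^\pm|^p\,dx$ and hence $\mathcal R(w)=\lambda_2$ for every admissible $w\ne0$. Choosing $s,t\ge0$ (not both $0$) with $s\int_\Omega f\varphi_2^+\,dx+t\int_\Omega f\varphi_2^-\,dx=0$ — always possible since the two integrals have opposite weak signs — places $w$ in the constraint set, so $\lambda_f^*\le\lambda_2$.

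For part \ref{thm:1:2} I would use the Lagrange multiplier rule for the constrained minimizer: $u_*$ solves $-\Delta_p u_*=\lambda_f^*|u_*|^{p-2}u_*+\mu f$ for some $\mu\in\mathbb{R}$ (testing with $u_*$ identifies the eigen-multiplier as $\lambda_f^*$, and the constraint qualification holds because $f$ and $|u_*|^{p-2}u_*$ are linearly independent). If $\mu=0$ then $\lambda_f^*$ is an eigenvalue of \eqref{E} in $(\lambda_1,\lambda_2]$, forcing $\lambda_f^*=\lambda_2$; thus $\lambda_f^*<\lambda_2$ gives $\mu\ne0$. Replacing $u_*$ by $-u_*$ if necessary we may take $\mu>0$, and rescaling $\hat u:=\mu^{-1/(p-1)}u_*$ turns the Euler–Lagrange equation into \eqref{D} at $\lambda=\lambda_f^*$. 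Then $\int_\Omega f\hat u\,dx=0$, so $\hat u$ is not strictly negative and \ref{AMP} fails at $\lambda_f^*$, i.e.\ $\lambda_f\le\lambda_f^*$. To upgrade this to the strict inequality I would continue $\hat u$ to parameters $\lambda$ slightly below $\lambda_f^*$ along the branch of ground states (the supplementary result announced in the abstract): by continuity the continued solutions $u_\lambda$ satisfy $\mathcal R(u_\lambda)\to\mathcal R(\hat u)=\lambda_f^*>\lambda$, whence $\int_\Omega fu_\lambda\,dx>0$ and $u_\lambda$ is not strictly negative; thus \ref{AMP} fails on a whole left neighborhood of $\lambda_f^*$ and $\lambda_f<\lambda_f^*$.

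Finally, part \ref{thm:1:1} only adds content when $\lambda_f^*=\lambda_2$, for otherwise \ref{thm:1:2} already applies. If the minimizer still has $\mu\ne0$, the argument above gives $\lambda_f\le\lambda_2=\lambda_f^*$ verbatim; the genuinely degenerate case is $\mu=0$, where $u_*=\varphi_2$ and $\int_\Omega f\varphi_2\,dx=0$, so no solution of \eqref{D} is produced directly. Here I would invoke the extra structure: for $p=2$ the condition $\int_\Omega f\varphi_2\,dx=0$ is exactly the Fredholm orthogonality making $-\Delta u-\lambda_2 u=f$ solvable, and adding a suitable multiple of $\varphi_2$ yields a sign-changing solution at $\lambda_2$; for $N=1$ the same conclusion follows from ODE shooting. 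Either way \ref{AMP} fails at $\lambda_2$ and $\lambda_f\le\lambda_2=\lambda_f^*$. I expect the main obstacles to be precisely these two points: constructing and controlling the ground-state branch on $(\lambda_1,\lambda_2)$ for the quasilinear operator, where no implicit function theorem is available, which drives the strictness in \ref{thm:1:2}; and the boundary case $\lambda_f^*=\lambda_2$ with $\mu=0$, where the minimizer degenerates into a pure eigenfunction and only the linear Fredholm alternative ($p=2$) or the phase-plane analysis ($N=1$) lets one cross to an actual solution of \eqref{D} — which is exactly why \ref{thm:1:1} is restricted to these two cases.
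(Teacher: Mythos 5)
Your preliminaries and the weak inequality are handled correctly and essentially as in the paper: the identity $\int_\Omega f u \, dx = (\mathcal{R}(u)-\lambda)\int_\Omega |u|^p\,dx$, the bounds $\lambda_1<\lambda_f^*\leq\lambda_2$ via $\varphi_1$ and combinations $s\varphi_2^++t\varphi_2^-$ (this is Lemma \ref{lem:l<l<l}\ref{lem:l<l<l:1}), and the Lagrange-multiplier step producing, when $\lambda_f^*<\lambda_2$, a solution $\hat u$ of \eqref{D} at $\lambda=\lambda_f^*$ with $\int_\Omega f\hat u\,dx=0$, whence $\lambda_f\leq\lambda_f^*$ (the paper's Lemma \ref{prop:existence_zero} and Corollary \ref{cor:l<l}). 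The genuine gap is your upgrade to the \emph{strict} inequality in \ref{thm:1:2}. You propose to continue $\hat u$ to solutions $u_\lambda$ for $\lambda$ slightly below $\lambda_f^*$ and to deduce $\mathcal{R}(u_\lambda)>\lambda$, hence $\int_\Omega f u_\lambda\,dx>0$, from $\mathcal{R}(u_\lambda)\to\lambda_f^*$. This fails twice over. First, it is a non sequitur: both $\mathcal{R}(u_\lambda)$ and $\lambda$ tend to $\lambda_f^*$, so convergence gives no comparison between them (and no continuation/continuity tool exists for $p\neq 2$ anyway, as you concede without resolving it). Second, and decisively, the conclusion you are after is \emph{false}: by the paper's Lemma \ref{lem:H<0} (equivalently Proposition \ref{prop:q}\ref{prop:q:2}), for every $\lambda\in(\lambda_1,\lambda_f^*)$ \emph{every} solution $u$ of \eqref{D} satisfies $H_\lambda(u)<0$, i.e.\ $\mathcal{R}(u)<\lambda$ and $\int_\Omega fu\,dx<0$; no branch of solutions below $\lambda_f^*$ can carry the sign you need, so failure of \ref{AMP} on $[\lambda_f,\lambda_f^*)$ cannot be detected this way. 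The paper's strictness proof (Lemma \ref{lem:l<<l}) uses an entirely different, energy-level mechanism: assuming $\lambda_f=\lambda_f^*$, the negative ground states $u_n$ at $\mu_n\nearrow\lambda_f^*$ converge weakly to a nonpositive solution $u$ at $\lambda_f^*$ with $E_{\lambda_f^*}(u)>0$ (ground states at $\lambda_f^*$ are sign-changing with zero energy, Lemma \ref{lem:ground_state_0}), while competitors $t_n^+v^++t_n^-v^-\in\mathcal{N}_{\mu_n}$ built from the sign-changing minimizer $v$ have energies tending to $0$, contradicting minimality of $u_n$ over $\mathcal{N}_{\mu_n}$ (Lemma \ref{rem:ground_state}). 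That idea is absent from your sketch.

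Part \ref{thm:1:1} also has gaps in the boundary case $\lambda_f^*=\lambda_2$. For $p=2$ you invoke the Fredholm alternative from ``$\int_\Omega f\varphi_2\,dx=0$'', but you only know this for the particular minimizer $u_*$; solvability at $\lambda_2$ requires $f$ to be orthogonal to the \emph{whole} (possibly multidimensional) second eigenspace. This is exactly what Lemma \ref{lem:l<l<l}\ref{lem:l<l<l:4} supplies in contrapositive form; for $p=2$ it can also be repaired cheaply: if some eigenfunction $\psi$ at $\lambda_2$ had $\int_\Omega f\psi\,dx\neq 0$, then $w=\psi-c\varphi_1$ with $c=\int_\Omega f\psi\,dx/\int_\Omega f\varphi_1\,dx$ is admissible for \eqref{eq:l*} and, by $L^2$- and gradient-orthogonality of $\psi$ and $\varphi_1$, satisfies $\mathcal{R}(w)<\lambda_2$, contradicting $\lambda_f^*=\lambda_2$ --- but some such argument must be made. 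For $N=1$, $p\neq 2$, ``ODE shooting'' is only a phrase: at $\lambda=\lambda_2$ the problem is resonant and may have no solution at all, so one must instead produce sign-changing solutions at parameters $\mu_k\to\lambda_2$; the paper does this via a genuine degree-theoretic bifurcation argument (the jump of the Leray--Schauder index across $\lambda_2$, following del Pino--Man\'asevich), which is the substantive content of Lemma \ref{lem:N=1} and for which your proposal offers no substitute.
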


We also state some properties of $\lambda_f^*$.
\begin{prop}\label{thm:3}
	Let $p>1$. The following assertions hold:
	\begin{enumerate}[label={\rm(\roman*)}]
		\item\label{thm:3:4} If there exists a second eigenfunction $\varphi_2$ such that $\int_\Omega f \varphi_2 \, dx \neq 0$, then $\lambda_f^* < \lambda_2$ and thus $\lambda_f < \lambda_f^* < \lambda_2$.
		\item\label{thm:3:2} There exists a sequence $\{f_n\} \subset L^\infty(\Omega)$ such that $\lambda^*_{f_n} \to \lambda_1$ as $n \to +\infty$.
	\end{enumerate}
\end{prop}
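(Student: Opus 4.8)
Throughout write $R(u) := \int_\Omega|\nabla u|^p\,dx\big/\int_\Omega|u|^p\,dx$, so that $\lambda_f^* = \inf\{R(u): \int_\Omega fu\,dx = 0,\ u\ne0\}$. The two assertions are essentially independent and I would prove them separately.

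For \ref{thm:3:4}, once $\lambda_f^* < \lambda_2$ is established the chain $\lambda_f < \lambda_f^* < \lambda_2$ is immediate from Theorem~\ref{thm:1}\ref{thm:1:2}, so the whole problem is to produce a single admissible competitor $u$ (one with $\int_\Omega fu\,dx = 0$) with $R(u) < \lambda_2$. The first point to note is that this cannot be done inside the cone spanned by $\varphi_2^+$ and $\varphi_2^-$: these have disjoint supports and each satisfies $\int_\Omega|\nabla\varphi_2^\pm|^p\,dx = \lambda_2\int_\Omega|\varphi_2^\pm|^p\,dx$, so every $u = s\varphi_2^+ + t\varphi_2^-$ has $R(u) = \lambda_2$ exactly. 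Hence the competitor must be sought outside this cone, and the role of $\int_\Omega f\varphi_2\,dx\ne0$ is precisely to make room for it.

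The plan is to enlarge one nodal domain. Write $\Omega^\pm := \{\pm\varphi_2 > 0\}$; since $\varphi_2^\pm$ is a first eigenfunction on $\Omega^\pm$ one has $\lambda_1(\Omega^+) = \lambda_1(\Omega^-) = \lambda_2$. Set $A := \int_\Omega f\varphi_2^+\,dx\ge0$ and $B := \int_\Omega f\varphi_2^-\,dx\le0$, so $A + B = \int_\Omega f\varphi_2\,dx\ne0$; assume $\int_\Omega f\varphi_2\,dx < 0$ (the opposite sign is handled by interchanging $\Omega^+$ and $\Omega^-$). I would move the nodal interface $\Gamma := \partial\Omega^+\cap\Omega$ a small distance $t>0$ into $\Omega^-$, producing $\Omega_t^+\supsetneq\Omega^+$ and $\Omega_t^- := \Omega\setminus\overline{\Omega_t^+}\subsetneq\Omega^-$; strict domain monotonicity gives $\lambda_1(\Omega_t^+) = \lambda_2 - \delta_t$ and $\lambda_1(\Omega_t^-) = \lambda_2 + \delta_t'$ with $\delta_t,\delta_t' > 0$. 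With $\psi_t^\pm\ge0$ the $L^p$-normalized first eigenfunctions on $\Omega_t^\pm$, the disjointly supported $u_t := a\psi_t^+ - b\psi_t^-$ ($a,b>0$) has the exact weighted average
\begin{equation*}
R(u_t) = \frac{a^p\lambda_1(\Omega_t^+) + b^p\lambda_1(\Omega_t^-)}{a^p + b^p} = \lambda_2 + \frac{-a^p\delta_t + b^p\delta_t'}{a^p + b^p}.
\end{equation*}
The constraint $\int_\Omega fu_t\,dx = 0$ reads $a\int_\Omega f\psi_t^+\,dx = b\int_\Omega f\psi_t^-\,dx$ and thus fixes $a/b$; as $t\to0^+$, where $\psi_t^\pm\to\varphi_2^\pm$ up to normalization, this ratio tends to $|B|\,\|\varphi_2^+\|_p/(A\,\|\varphi_2^-\|_p)$. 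The competitor beats $\lambda_2$ as soon as $(a/b)^p > \delta_t'/\delta_t$, and here the Hadamard (domain-derivative) formula enters: since $\varphi_2\in C^1(\overline\Omega)$, $|\nabla\varphi_2|$ is continuous across $\Gamma$, so the two rates share the same interface integral and $\delta_t'/\delta_t\to\|\varphi_2^+\|_p^p/\|\varphi_2^-\|_p^p$. Substituting the limiting weight ratio, the condition collapses to $|B|^p > A^p$, i.e. to $A + B < 0$, which is exactly our assumption; hence $R(u_t) < \lambda_2$ for all small $t>0$, giving $\lambda_f^* < \lambda_2$.

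The main obstacle is this last step: making $\delta_t'/\delta_t\to\|\varphi_2^+\|_p^p/\|\varphi_2^-\|_p^p$ rigorous for the $p$-Laplacian. This needs the shape-derivative formula for $\lambda_1$ under a normal deformation of the boundary together with enough regularity of $\Gamma$ to integrate against it, in particular that $\nabla\varphi_2\ne0$ on a relatively open, full-measure part of $\Gamma$, so that the deformation can be supported where $|\nabla\varphi_2|>0$. The degenerate situations (e.g. $A=0$ with $\operatorname{supp}f$ away from $\Gamma$) are in fact easier, since then $u_t = \psi_t^+$ is already admissible with $R(u_t) = \lambda_1(\Omega_t^+) < \lambda_2$. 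I expect this analytic input to be the technical heart of the argument.

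For \ref{thm:3:2} I would argue directly. Since $\varphi_1 > 0$ in $\Omega$ and $\varphi_1 = 0$ on $\partial\Omega$, one can perturb $\varphi_1$ inside a boundary layer of width $1/n$ so that the resulting $u_n\in W_0^{1,p}(\Omega)$ dips strictly below zero there while $\|u_n-\varphi_1\|_{W_0^{1,p}}\to0$; then $u_n$ changes sign and $R(u_n)\to R(\varphi_1) = \lambda_1$. Writing $P_n := \int_{\{u_n>0\}}u_n\,dx>0$ and $M_n := \int_{\{u_n<0\}}(-u_n)\,dx>0$, I set
\begin{equation*}
f_n := \mathbf 1_{\{u_n > 0\}} + \frac{P_n}{M_n}\,\mathbf 1_{\{u_n < 0\}}\ \in L^\infty(\Omega),
\end{equation*}
which is nonnegative, not identically zero, and satisfies $\int_\Omega f_nu_n\,dx = P_n - (P_n/M_n)M_n = 0$. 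Thus $u_n$ is admissible in \eqref{eq:l*} for $f_n$, so by Theorem~\ref{thm:1} we get $\lambda_1 < \lambda_{f_n}^* \le R(u_n)\to\lambda_1$, whence $\lambda_{f_n}^*\to\lambda_1$, as required.
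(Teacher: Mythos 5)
Your proof of part \ref{thm:3:2} is correct and is essentially the paper's own argument: in both cases one builds sign-changing functions $u_n$ with Rayleigh quotient tending to $\lambda_1$ (you by denting $\varphi_1$ in a boundary layer, the paper by adding a small negative bump $\beta_n\xi_n$ to compactly supported approximations $v_n$ of $\varphi_1$), then chooses a piecewise-defined $f_n \geq 0$ whose weights are tuned so that $\int_\Omega f_n u_n \, dx = 0$, and squeezes $\lambda_1 < \lambda_{f_n}^* \leq R(u_n) \to \lambda_1$. Your construction of $u_n$ should be written out (the dip must go below $\varphi_1 > 0$ while $\|u_n - \varphi_1\|_{W_0^{1,p}} \to 0$, which works, e.g., with bumps of height comparable to $\sup_{B_n}\varphi_1$ on small balls $B_n$ at distance $\sim 1/n$ from $\partial\Omega$), but this is routine.

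Part \ref{thm:3:4} is where the genuine gap lies. Your strategy is constructive: deform the nodal interface $\Gamma$ to manufacture a competitor with Rayleigh quotient strictly below $\lambda_2$, and the entire weight of the argument rests on the asymptotics $\delta_t'/\delta_t \to \|\varphi_2^+\|_p^p/\|\varphi_2^-\|_p^p$, which you explicitly leave unproved. For $p \neq 2$ this is not a technical footnote but a serious obstruction: it requires (a) a Hadamard shape-derivative formula for $\lambda_1$ of the $p$-Laplacian under deformations of $\Gamma$, (b) enough regularity of $\Gamma$ to integrate $|\nabla \varphi_2|^p$ over it, and (c) nondegeneracy of $\nabla\varphi_2$ on a substantial portion of $\Gamma$. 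None of these is available in general: the nodal domains $\Omega^\pm$ of a second eigenfunction of the $p$-Laplacian are not known to be smooth (or even Lipschitz), the critical set of $\varphi_2$ on its nodal surface is not understood for $p \neq 2$, and without boundary regularity even the phrase ``move $\Gamma$ a distance $t$'' and the claimed rates $\delta_t, \delta_t' \asymp t$ are not justified. In addition, your dispatch of the degenerate case $A = 0$ silently adds the hypothesis that $\text{supp}\, f$ stays away from $\Gamma$: if $f \equiv 0$ on $\Omega^+$ but $f > 0$ on the $\Omega^-$ side of $\Gamma$, then $\psi_t^+$, supported on the \emph{enlarged} domain, need not satisfy $\int_\Omega f \psi_t^+ \, dx = 0$. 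As it stands, the proposal for \ref{thm:3:4} is a program, not a proof.

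The paper's proof (Lemma \ref{lem:l<l<l} \ref{lem:l<l<l:4}) shows the idea you missed: one never needs a competitor strictly below $\lambda_2$. Argue by contradiction. If $\lambda_f^* = \lambda_2$, then the cone element $w = \alpha\varphi_2^+ + \varphi_2^-$, with $\alpha > 0$ chosen so that $\int_\Omega f w \, dx = 0$, is an \emph{attained minimizer} of \eqref{eq:l*} --- precisely the cone you discarded at the outset as useless because $R \equiv \lambda_2$ on it; and the hypothesis $\int_\Omega f\varphi_2 \, dx \neq 0$ forces $\alpha \neq 1$. The Lagrange multiplier rule then makes a suitable scaling of $w$ either a second eigenfunction or a solution of \eqref{D} at $\lambda = \lambda_2$; in either case $\alpha\varphi_2^+ + \varphi_2^-$ with $\alpha \neq 1$ would be of class $C^{1,\gamma}(\overline\Omega)$ alongside $\varphi_2$ itself, contradicting the Hopf maximum principle across the interface between the nodal domains, see \cite[Lemma 2.4]{DR} (the cases $\int_\Omega f\varphi_2^\pm \, dx = 0$ are handled identically with $w = \varphi_2^\pm$). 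This soft argument uses only the $C^{1,\gamma}$ regularity of eigenfunctions and solutions and requires no information at all about the nodal set, which is exactly why it closes where your constructive route cannot without substantial new analytic input.
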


Several additional basic properties of $\lambda_f$ and $\lambda_f^*$ are discussed in Lemma \ref{lem:l<l<l} below.

Apparently, the information for the linear problem ($p=2$) is more accurate. This is mainly due to the presence of the Fredholm alternative, which states, in particular, that \eqref{D} is uniquely solvable provided $\lambda \in (\lambda_1,\lambda_2)$, and \eqref{D} is solvable at $\lambda=\lambda_2$ if and only if $\int_\Omega f \varphi_2 \,dx =0$ for any second eigenfunction $\varphi_2$. While a counterpart of the Fredholm alternative for the general $p$-Laplacian is relatively well-developed when $\lambda$ is in a neighbourhood of $\lambda_1$ (see, e.g., \cite{DGTU,takac} and references therein), the situation near higher eigenvalues is more complicated due to the lack of full description of the spectrum of the $p$-Laplacian. Because of that, the inequality $\lambda_f \leq \lambda_f^*$ remains an open problem if the following three assumptions are simultaneously satisfied: $p \neq 2$, $N\geq 2$, and $\lambda_f^* = \lambda_2$.

\smallskip	
The definition of $\lambda_f^*$ and the results of Theorem \ref{thm:1} are connected with 
energy properties of solutions of \eqref{D}. 
Namely, recall that (weak) solutions of \eqref{D} are in one-to-one correspondence with critical points of the \textit{energy functional} $E_\lambda \in C^1(W_0^{1,p}(\Omega),\mathbb{R})$ given by
$$
E_\lambda(u) := \frac{1}{p} H_\lambda(u)  - \int_\Omega f u \, dx,
\quad \text{where} \quad 
H_\lambda(u) := \int_\Omega |\nabla u|^p \, dx - 
\lambda \int_\Omega |u|^p \, dx.
$$
Suppose that $u \in W_0^{1,p}(\Omega)$ is a solution of \eqref{D} with $E_\lambda(u)=0$. Since $u$ is a critical point of $E_\lambda$, we have, in particular, that $\left<E_\lambda'(u),u\right> \equiv H_\lambda(u)-\int_\Omega f u \, dx = 0$. 
It follows from $E_\lambda(u) = \left<E_\lambda'(u),u\right> = 0$ that 
$$
\lambda
=
\frac{\int_\Omega |\nabla u|^p \, dx}{\int_\Omega |u|^p \, dx}
\quad \text{and} \quad
\int_\Omega f u \, dx=0.
$$
Comparing these equalities with the definition \eqref{eq:l*} of $\lambda_f^*$, we directly get the first part of the following result.
\begin{prop}\label{prop:q}
	Let $p>1$. If $\lambda < \lambda_f^*$, then \eqref{D} has no solution $u$ such that $E_\lambda(u)=0$. 
	Moreover, the following assertions hold:
	\begin{enumerate}[label={\rm(\roman*)}]
		\item\label{prop:q:1} 
			If $\lambda < \lambda_1$, then, in addition to \ref{MP}, any solution $u$ of \eqref{D} satisfies $E_\lambda(u) < 0$.
		\item\label{prop:q:2}
			If $\lambda_1 < \lambda < \lambda_f^*$, then any solution $u$ of \eqref{D} satisfies $E_\lambda(u) > 0$.
	\end{enumerate}	
\end{prop}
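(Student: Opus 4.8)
The plan is to reduce everything to the identity satisfied by a solution. Since any solution $u$ of \eqref{D} is a critical point of $E_\lambda$, testing the equation with $u$ gives $\left<E_\lambda'(u),u\right> = H_\lambda(u) - \int_\Omega f u\,dx = 0$, i.e. $\int_\Omega f u\,dx = H_\lambda(u)$; substituting this into the definition of $E_\lambda$ yields
$$
E_\lambda(u) = \Big(\tfrac1p - 1\Big) H_\lambda(u) = -\frac{p-1}{p}\, H_\lambda(u).
$$
Because $p>1$ and $u \not\equiv 0$ (as $f \not\equiv 0$ rules out $u=0$), the sign of $E_\lambda(u)$ is opposite to that of $H_\lambda(u) = \int_\Omega f u\,dx$. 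Thus, granting the first assertion (already established in the text, it shows $H_\lambda(u) \neq 0$ whenever $\lambda < \lambda_f^*$), claims \ref{prop:q:1} and \ref{prop:q:2} amount to proving $H_\lambda(u)>0$ and $H_\lambda(u)<0$, respectively. For \ref{prop:q:1}, if $\lambda < \lambda_1$ the variational definition of $\lambda_1$ gives $\int_\Omega |\nabla u|^p\,dx \geq \lambda_1 \int_\Omega |u|^p\,dx$, whence $H_\lambda(u) \geq (\lambda_1 - \lambda)\int_\Omega |u|^p\,dx > 0$, and the conclusion follows at once (the strict sign of $u$ being furnished separately by \ref{MP}).

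For \ref{prop:q:2} I would first record the geometric meaning of $\lambda_f^*$: by \eqref{eq:l*} every $v \in W_0^{1,p}(\Omega)\setminus\{0\}$ with $\int_\Omega f v\,dx = 0$ satisfies $\int_\Omega |\nabla v|^p\,dx \geq \lambda_f^* \int_\Omega |v|^p\,dx$, so for $\lambda < \lambda_f^*$ one has $H_\lambda(v) \geq (\lambda_f^* - \lambda)\int_\Omega |v|^p\,dx > 0$ and therefore $E_\lambda(v) = \tfrac1p H_\lambda(v) > 0$ on the whole hyperplane $\{v : \int_\Omega f v\,dx = 0\}$ minus the origin. Since $\int_\Omega f u\,dx = H_\lambda(u) \neq 0$, it remains only to exclude $\int_\Omega f u\,dx > 0$. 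Testing \eqref{D} with $u^{\pm}$ and using that $\nabla u^+$ and $\nabla u^-$ have disjoint supports gives $H_\lambda(u^{\pm}) = \int_\Omega f u^{\pm}\,dx$; writing $a := \int_\Omega f u^+\,dx \geq 0$ and $b := \int_\Omega f u^-\,dx \leq 0$, we have $a+b = \int_\Omega f u\,dx$. If $u$ is nonnegative then $u>0$ in $\Omega$ by the strong maximum principle, and a standard application of Picone's inequality to the pair $(u,\varphi_1)$ yields $(\lambda_1-\lambda)\int_\Omega \varphi_1^p\,dx \geq \int_\Omega f\,\varphi_1^p u^{1-p}\,dx \geq 0$, which is impossible for $\lambda>\lambda_1$; hence no nonnegative solution exists.

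The heart of the argument is then a two-parameter test covering the sign-changing case. One checks first that $b<0$ and $a>0$: indeed, if $b=0$ with $u^-\not\equiv0$, then $u^-$ lies in the hyperplane above yet satisfies $H_\lambda(u^-)=b=0$, contradicting its strict positivity there, and symmetrically for $a$. For $\tau>0$ set $w_\tau := |b|\tau\,u^+ + a\tau\,u^-$; by construction $\int_\Omega f w_\tau\,dx = a\tau(|b|+b) = 0$ and $w_\tau \neq 0$, so $E_\lambda(w_\tau) > 0$. On the other hand the disjoint-support additivity of $H_\lambda$ gives the exact value
$$
E_\lambda(w_\tau) = \tfrac1p H_\lambda(w_\tau) = \frac{\tau^p}{p}\, a\,|b|\big(|b|^{p-1} - a^{p-1}\big),
$$
which is positive only when $|b|>a$, i.e. when $a+b<0$. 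This contradicts $a+b = \int_\Omega f u\,dx > 0$, so necessarily $\int_\Omega f u\,dx < 0$, giving $E_\lambda(u)>0$.

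The main obstacle is exactly this exclusion of solutions with $\int_\Omega f u\,dx \geq 0$: unlike the linear case, there is neither a Fredholm alternative nor uniqueness to track a single solution, so one cannot argue by continuation along one branch. What makes the argument go through for every $p>1$ is the interplay between the positivity of $E_\lambda$ on the hyperplane $\{\int_\Omega f v\,dx = 0\}$ and the exact homogeneity of $H_\lambda$ over the disjoint supports of $u^{\pm}$, which converts the sign condition $a+b>0$ into the contradictory inequality $E_\lambda(w_\tau)<0$. The only configuration not reached by this decomposition is the nonnegative solution, which is disposed of separately by Picone's inequality; I would expect the careful verification of the admissibility of the Picone test function and of the regularity of $u^{\pm}$ to be the most delicate technical points.
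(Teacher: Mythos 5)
Your proof is correct and follows essentially the same route as the paper: the identity $E_\lambda(u)=(\tfrac1p-1)H_\lambda(u)$ on the Nehari manifold, the positivity of $H_\lambda$ on the hyperplane $\{\int_\Omega f v\,dx=0\}$ for $\lambda<\lambda_f^*$, exclusion of nonnegative solutions via Picone's inequality (the paper's Lemma \ref{lem:nonnegative}), and in the sign-changing case a test function on that hyperplane built from $u^\pm$. Your $w_\tau=|b|\tau\,u^+ + a\tau\,u^-$ is just a positive rescaling of the paper's $\alpha u^+ + u^-$ with $\alpha=|b|/a\in(0,1)$ (used in Lemma \ref{lem:H<0}), so by $p$-homogeneity the key computation is identical, merely read directly rather than by contradiction.
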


More can be said if we consider the branch of ground state solutions of \eqref{D} with $\lambda \in [\lambda_f^*,\lambda_2)$.
By the \textit{ground state solution} of \eqref{D} we mean a solution $u$ which satisfies
$$
E_\lambda(u) \leq E_\lambda(v)
\quad \text{for any other solution } v \text{ of } \eqref{D}.
$$	
It can be easily seen that \eqref{D} possesses a ground state solution for any $\lambda \in [\lambda_f^*,\lambda_2)$, see, e.g., Lemma \ref{lem:ground_state} below.
In the following theorem, we provide some qualitative properties of the corresponding branch $(\lambda,u) \in [\lambda_f^*,\lambda_2) \times W_0^{1,p}(\Omega)$, which reveals the connection with Theorem \ref{thm:1}.
\begin{thm}\label{thm:2}
	Let $p>1$. The following assertions hold:
	\begin{enumerate}[label={\rm(\roman*)}]
		\item\label{thm:2:2} 
		If $\lambda = \lambda_f^* < \lambda_2$, then any ground state solution $u$ of \eqref{D} is sign-changing and satisfies $E_\lambda(u) = 0$. Moreover, $u$ is a minimizer for $\lambda_f^*$.
		\item\label{thm:2:3} 
		Let $f>0$. If $\lambda_f^* < \lambda < \lambda_2$, then any ground state solution $u$ of \eqref{D} is sign-changing and satisfies $E_\lambda(u) < 0$.
	\end{enumerate}
\end{thm}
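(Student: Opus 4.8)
The plan is to reduce both assertions to the sign of $\int_\Omega f u\,dx$ along the ground-state branch, and to read off the sign-changing property from the same quantity. The backbone is the identity already used before Proposition \ref{prop:q}: testing \eqref{D} with the solution gives $H_\lambda(u)=\int_\Omega fu\,dx$, whence for \emph{every} solution
$E_\lambda(u)=\tfrac1p H_\lambda(u)-\int_\Omega fu\,dx=-\tfrac{p-1}{p}\int_\Omega fu\,dx$. Thus $E_\lambda(u)<0$, $=0$, $>0$ exactly when $\int_\Omega fu\,dx$ is $>0$, $=0$, $<0$, and I would restate (i)--(ii) as sign statements for $\int_\Omega fu\,dx$ on ground states. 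The sign-changing claim I would extract from a Picone-type argument: a solution $u\ge0$ is strictly positive by the strong maximum principle, and Picone's inequality tested against $\varphi_1$ then forces $\lambda\le\lambda_1$, impossible since $\lambda\ge\lambda_f^*>\lambda_1$; so no ground state is nonnegative. A solution $u\le0$ has $\int_\Omega fu\,dx\le0$, so once I know $\int_\Omega fu\,dx\ge0$ (both parts) the only signed possibility is $u\le0$ with $\int_\Omega fu\,dx=0$, forcing $u\equiv0$ on $\{f>0\}$, which contradicts the equation there. Hence any ground state with $E_\lambda\le0$ is sign-changing.

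For (i) I would first produce an explicit candidate at $\lambda=\lambda_f^*$. A minimizing sequence for \eqref{eq:l*}, normalized by $\int_\Omega|u_n|^p\,dx=1$, is bounded in $W_0^{1,p}(\Omega)$; weak lower semicontinuity together with the compact embedding $W_0^{1,p}(\Omega)\hookrightarrow L^p(\Omega)$ yields a minimizer $w$ with $\int_\Omega fw\,dx=0$ and $\int_\Omega|\nabla w|^p\,dx=\lambda_f^*$. Its Euler--Lagrange equation is $-\Delta_p w=\lambda_f^*|w|^{p-2}w+\alpha f$, and this is exactly where $\lambda_f^*<\lambda_2$ enters: if $\alpha=0$ then $w$ would be an eigenfunction of \eqref{E} at the value $\lambda_f^*\in(\lambda_1,\lambda_2)$, which is impossible, so $\alpha\neq0$ and, after replacing $w$ by $-w$ if needed, $\alpha>0$. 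Rescaling $v:=\alpha^{1/(p-1)}w$ then solves \eqref{D} at $\lambda_f^*$ with $\int_\Omega fv\,dx=0$, so $E_{\lambda_f^*}(v)=0$, $v$ is a minimizer for $\lambda_f^*$, and by the previous paragraph $v$ is sign-changing. This already gives $c(\lambda_f^*):=\min_{\text{sol}}E_{\lambda_f^*}\le0$ and fixes all the qualitative conclusions provided the energy value is shown to be $0$.

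The remaining and genuinely hard point is the matching lower bound $c(\lambda_f^*)\ge0$ for (i), i.e. that \emph{every} solution at $\lambda_f^*$ satisfies $\int_\Omega fu\,dx\le0$, and the strict negativity in (ii). For (ii) I would run the branch of Lemma \ref{lem:ground_state} past $\lambda_f^*$: for $\lambda>\lambda_f^*$ the form $H_\lambda$ is negative somewhere on $\{\int_\Omega fu\,dx=0\}$ (take the $w$ above), while the test direction $\varphi_2^+$ satisfies $H_\lambda(\varphi_2^+)=(\lambda_2-\lambda)\int_\Omega|\varphi_2^+|^p\,dx>0$ and $\int_\Omega f\varphi_2^+\,dx>0$ (this uses $f>0$), so the fibering $t\mapsto E_\lambda(t\varphi_2^+)$ has a negative minimum; combined with the strict monotonicity/continuity of $c$ recorded in Lemma \ref{lem:l<l<l} this gives $c(\lambda)<c(\lambda_f^*)=0$, hence $\int_\Omega fu\,dx>0$, hence sign-changing with $E_\lambda<0$. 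For the lower bound in (i) I would argue by contradiction against Proposition \ref{prop:q}(ii): if some solution $u$ at $\lambda_f^*$ had $\int_\Omega fu\,dx>0$, continuing it to $\lambda$ slightly below $\lambda_f^*$ would produce a solution with $\int_\Omega fu\,dx>0$, contradicting Proposition \ref{prop:q}(ii).

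The main obstacle is exactly this last step for general $p$. The energy identity, the Picone argument, and the single-value construction at $\lambda_f^*$ are robust, but the lower bound $c(\lambda_f^*)\ge0$ cannot be obtained by soft semicontinuity: compactness of the branch gives lower semicontinuity of $c$, which is the wrong direction, so one really needs a continuation (nondegeneracy) argument or the structural input of Lemmas \ref{lem:ground_state}--\ref{lem:l<l<l}. When $p=2$ or $N=1$ the difficulty disappears, which is how I would dispatch those cases cleanly: \eqref{D} has a \emph{unique} solution for $\lambda\in(\lambda_1,\lambda_2)$ (Fredholm alternative, resp.\ ODE uniqueness), this solution is automatically the ground state, and for $p=2$ one computes $\int_\Omega fu\,dx=\sum_k(\int_\Omega f\varphi_k\,dx)^2/(\lambda_k-\lambda)$, which is strictly increasing in $\lambda$ and vanishes precisely at $\lambda_f^*$, yielding the three sign regimes $>0$, $=0$, $<0$ simultaneously. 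For general $p$ I expect the real work to sit in the compactness and strict monotonicity of $c(\lambda)$ along the branch, which is where the earlier lemmas should carry the load.
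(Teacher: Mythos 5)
Your skeleton for part (i) matches the paper: the identity $E_\lambda(u)=-\tfrac{p-1}{p}\int_\Omega fu\,dx$ on solutions (Lemma \ref{lem:neh} \ref{lem:neh:1}), the Lagrange-multiplier construction of a sign-changing solution with zero energy at $\lambda=\lambda_f^*$ using $\lambda_f^*<\lambda_2$ to rule out a vanishing multiplier (this is exactly Lemma \ref{prop:existence_zero}), and the exclusion of nonnegative solutions via Picone (Lemma \ref{lem:nonnegative}). But the step you yourself identify as the crux of (i) --- that \emph{every} solution of \eqref{D} at $\lambda=\lambda_f^*$ satisfies $\int_\Omega fu\,dx\le 0$ --- is left unproved: your continuation-to-$\lambda<\lambda_f^*$ argument has no foundation for $p\neq 2$ (no nondegeneracy or implicit function theorem is available for the $p$-Laplacian), and you concede as much. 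The paper closes this gap with an elementary variational trick, not a continuation: if a solution had $\int_\Omega fu\,dx>0$, it must change sign, and choosing $\alpha\in(0,1)$ with $\int_\Omega f(\alpha u^++u^-)\,dx=0$ one computes from $H_\lambda(u^\pm)=\int_\Omega fu^\pm\,dx$ that $H_\lambda(\alpha u^++u^-)=(\alpha^p-\alpha)\int_\Omega fu^+\,dx<0$, contradicting the definition \eqref{eq:l*} of $\lambda_f^*$ via Lemma \ref{lem:H>0}; the degenerate case $\int_\Omega fu^-\,dx=0$ is killed because $u^-$ would then be a nonpositive minimizer for $\lambda_f^*$, against Lemma \ref{prop:existence_zero}. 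This computation (Lemma \ref{lem:H<0}) is the missing idea. (Also your one-line dismissal of a nonpositive solution with $\int_\Omega fu\,dx=0$ --- ``contradicts the equation on $\{f>0\}$'' --- is not rigorous, since $\{f>0\}$ is only measurable and the equation cannot be evaluated pointwise there; the paper needs the Hopf boundary-point argument of Lemma \ref{lem:sign-changing}.)

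Part (ii) in your proposal is essentially not proved. The fibering minimum along $\varphi_2^+$ produces a point of the Nehari manifold $\mathcal{N}_\lambda$ with negative energy, not a \emph{solution} with negative energy, and for $\lambda>\lambda_f^*$ Nehari points say nothing about ground states: directions with $H_\lambda$ small positive but $\int_\Omega fu\,dx$ bounded away from zero drive $E_\lambda$ in \eqref{eq:tu} to $-\infty$, which is precisely why the paper restricts the characterization of ground states as Nehari minimizers to $\lambda<\lambda_f^*$ (Lemma \ref{rem:ground_state}) and introduces the nodal set $\mathcal{M}_\lambda$ for $\lambda>\lambda_f^*$. The bridge you invoke, ``strict monotonicity/continuity of $c$ recorded in Lemma \ref{lem:l<l<l}'', does not exist: that lemma concerns $\lambda_f^*$ only, and no monotonicity or continuity of the ground-state level in $\lambda$ is proved anywhere in the paper (nor is it obvious). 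What the paper actually does (Proposition \ref{prop:existence_of_sign-changing_solution}) is minimize $E_\lambda$ over $\mathcal{M}_\lambda$, where both $u^\pm$ satisfy the Nehari constraint: negativity of the infimum comes from using the sign-changing zero-energy solution of part (i) as a competitor, compactness of minimizing sequences hinges on Lemma \ref{lem:second_eigenvalue}, and --- since $\mathcal{M}_\lambda$ is not a $C^1$ manifold --- the quantitative deformation lemma of Willem is needed to show the constrained minimizer is a genuine critical point. That deformation step is the real work behind assertion \ref{thm:2:3} and has no counterpart in your outline; your closing suggestion that $p=2$ or $N=1$ are easy is beside the point (the theorem is for all $p>1$, $N\ge1$, and for $N=1$, $p\neq2$ uniqueness on $(\lambda_1,\lambda_2)$ is itself not a known fact).
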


We schematically depict the results of Theorem \ref{thm:1} \ref{thm:1:2}, Proposition \ref{prop:q}, and Theorem \ref{thm:2} (taking into account the nonexistence result \cite[Theorem 1]{FGTT} for {\renewcommand\nlab{\lambda_1}\eqref{D}}) on Figure \ref{fig1} below.
\begin{figure}[ht]
	\centering
	\includegraphics[width=0.6\linewidth]{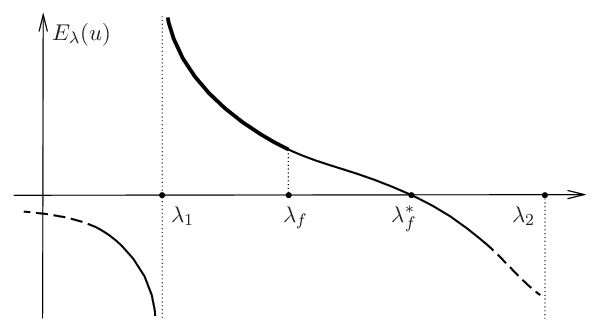}\\
	\caption{Dependence of the energy $E_\lambda(u)$ of the ground state solution $u$ of \eqref{D} on $\lambda$, provided $f>0$ and $\lambda_f^* < \lambda_2$.}
	\label{fig1}
\end{figure}

This paper is organized as follows. In Section \ref{sec:prop}, we prove Proposition \ref{thm:3} together with some other properties of $\lambda_f$ and $\lambda_f^*$. In Section \ref{sec:auxiliary}, we discuss some auxiliary results needed for the proof of Theorems \ref{thm:1} and \ref{thm:2}, and Proposition \ref{prop:q}. Then, in Section \ref{sec:proofs}, we prove three latter statements. Finally, Section \ref{sec:discuss} is devoted to some remarks and discussion.

\section{Some properties of \texorpdfstring{$\lambda_f^*$}{lambda*}}\label{sec:prop}

Let us provide several basic properties of $\lambda_f$ and $\lambda_f^*$. 
In particular, the following lemma contains Proposition \ref{thm:3} and the assertion $\lambda_f^* \in (\lambda_1, \lambda_2]$ of Theorem \ref{thm:1}.

\begin{lemma}\label{lem:l<l<l}
	Let $p>1$. 
	The following assertions hold:
	\begin{enumerate}[label={\rm(\roman*)}]
		\item\label{lem:l<l<l:0} 
		$\lambda^*_f$ possesses a minimizer;
		\item\label{lem:l<l<l:1} 
		$\lambda_1 < \lambda^*_f \leq \lambda_2$;
		\item\label{lem:l<l<l:4}
		if there exists a second eigenfunction $\varphi_2$ such that $\int_\Omega f \varphi_2 \, dx \neq 0$, then $\lambda_f^* < \lambda_2$; 
		\item\label{lem:l<l<l:45}
		if $p=2$ and $N=1$, then there exists $f$ such that $\int_\Omega f \varphi_2 \, dx = 0$ and $\lambda_f^* < \lambda_2$;
		\item\label{lem:l<l<l:2} 
		if $p=2$, then $\lambda_f = \lambda^*_f = \lambda_2$ for $f = \varphi_1$ (modulo scaling);
		\item\label{lem:l<l<l:3} 
		there exists a sequence $\{f_n\} \subset L^\infty(\Omega)$ such that $\lambda^*_{f_n} \to \lambda_1$ as $n \to +\infty$.
	\end{enumerate}
\end{lemma}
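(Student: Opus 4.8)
The plan is to establish each of the six assertions more or less independently, since they concern different aspects of $\lambda_f^*$. The unifying device throughout is the variational characterisation \eqref{eq:l*} together with the standard facts about $\lambda_1$, $\lambda_2$, and their eigenfunctions recalled in the introduction.

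\medskip

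\noindent\textbf{Existence of a minimizer (i) and the bounds (ii).}
For \ref{lem:l<l<l:0}, I would use the direct method. Take a minimizing sequence $\{u_n\}$ for \eqref{eq:l*}, normalised by $\int_\Omega |u_n|^p\,dx = 1$; then $\int_\Omega |\nabla u_n|^p\,dx$ is bounded, so along a subsequence $u_n \rightharpoonup u$ weakly in $W_0^{1,p}(\Omega)$ and strongly in $L^p(\Omega)$ by Rellich--Kondrachov. The strong convergence gives $\int_\Omega |u|^p\,dx = 1$, so $u \neq 0$, and also passes the linear constraint $\int_\Omega f u_n\,dx \to \int_\Omega f u\,dx = 0$ to the limit (here $f \in L^\infty \subset L^{p'}$). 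Weak lower semicontinuity of the Dirichlet energy then yields $\int_\Omega |\nabla u|^p\,dx \le \liminf \int_\Omega |\nabla u_n|^p\,dx = \lambda_f^*$, so $u$ is a minimizer. For \ref{lem:l<l<l:1}, the lower bound $\lambda_1 < \lambda_f^*$ follows because the constraint $\int_\Omega f u\,dx = 0$ with $f \ge 0$, $f\not\equiv 0$ rules out any positive multiple of $\varphi_1$: since $\varphi_1 > 0$, the only first eigenfunctions are signed, so a minimizer $u$ for $\lambda_f^*$ cannot be a first eigenfunction and hence the Rayleigh quotient strictly exceeds $\lambda_1$ (the strictness uses simplicity of $\lambda_1$). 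For the upper bound $\lambda_f^* \le \lambda_2$, the natural test function is a second eigenfunction $\varphi_2 = \varphi_2^+ + \varphi_2^-$: one seeks a combination $v = \alpha\varphi_2^+ + \beta\varphi_2^-$ that is admissible, i.e.\ satisfies $\int_\Omega f v\,dx = 0$, and for which the Rayleigh quotient equals $\lambda_2$. Since $\varphi_2^\pm$ have disjoint supports, both $\int_\Omega |\nabla v|^p$ and $\int_\Omega |v|^p$ split additively, and because $-\Delta_p \varphi_2 = \lambda_2 |\varphi_2|^{p-2}\varphi_2$ restricted to each nodal domain shows $\int_{\Omega}|\nabla \varphi_2^\pm|^p = \lambda_2 \int_\Omega |\varphi_2^\pm|^p$, the $p$-homogeneity forces the Rayleigh quotient of any such $v$ to equal exactly $\lambda_2$. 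It remains to choose $\alpha,\beta$ (not both zero) solving the single linear constraint $\alpha\int_\Omega f\varphi_2^+ + \beta\int_\Omega f\varphi_2^- = 0$, which is always possible.

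\medskip

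\noindent\textbf{Strict inequality (iii).}
For \ref{lem:l<l<l:4}, assuming some $\varphi_2$ with $\int_\Omega f\varphi_2\,dx \neq 0$, the idea is to produce an admissible test function with Rayleigh quotient \emph{strictly} below $\lambda_2$. I would again work with $v = \alpha\varphi_2^+ + \beta\varphi_2^-$ but now exploit that $\int_\Omega f\varphi_2^+$ and $\int_\Omega f\varphi_2^-$ cannot both vanish (their sum is nonzero). The admissibility equation $\alpha\int_\Omega f\varphi_2^+ = -\beta\int_\Omega f\varphi_2^-$ then forces $|\alpha|\neq|\beta|$ for a nontrivial solution, so the admissible combination is an \emph{unbalanced} one, $v = \alpha\varphi_2^+ + \beta\varphi_2^-$ with $\alpha \neq \pm\beta$. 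The key computation is that while the balanced eigenfunction realises $\lambda_2$, perturbing the ratio $\alpha/\beta$ away from the eigenfunction value strictly decreases the Rayleigh quotient — this is exactly the statement that $\varphi_2$ is a genuine (non-degenerate in direction) critical point of the quotient on $\mathrm{span}\{\varphi_2^+,\varphi_2^-\}$, or alternatively that $\lambda_2$ is characterised by a mountain-pass/min-max over precisely such two-dimensional families so that off the optimal ratio the quotient drops below $\lambda_2$. I expect \emph{this} step to be the main obstacle: one must show the strict drop rigorously, which for $p \neq 2$ is delicate because the functional $\int|\nabla v|^p$ does not split as nicely under scaling of the two pieces as in the quadratic case. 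A clean route is to define $g(s) = $ Rayleigh quotient of $\varphi_2^+ + s\,\varphi_2^-$, show $g$ is differentiable with $g(s_0) = \lambda_2$ at the balanced ratio $s_0$, and verify $g'(s_0) \neq 0$ using $\int_\Omega |\nabla\varphi_2^+|^p = \lambda_2\int_\Omega|\varphi_2^+|^p$ and the analogous identity for $\varphi_2^-$; then any admissible ratio, being forced off $s_0$, gives $g < \lambda_2$ on one side.

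\medskip

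\noindent\textbf{The one-dimensional example (iv), the linear case (v), and the sequence (vi).}
For \ref{lem:l<l<l:45}, in the explicit setting $p=2$, $N=1$, $\Omega=(0,L)$, everything is computable: $\varphi_1(x) = \sin(\pi x/L)$, $\varphi_2(x) = \sin(2\pi x/L)$, and $\lambda_2 = (2\pi/L)^2$. I would choose $f$ symmetric about $L/2$ so that $\int_0^L f\varphi_2 = 0$ automatically (as $\varphi_2$ is odd about the midpoint) yet $\int_0^L f\varphi_1 > 0$, and then exhibit a concrete admissible test function — for instance a suitable trigonometric or piecewise combination orthogonal to $f$ — whose Rayleigh quotient is strictly below $(2\pi/L)^2$; concreteness makes the strict inequality a direct calculation rather than an abstract argument. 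For \ref{lem:l<l<l:2}, with $p=2$ and $f=\varphi_1$, the constraint $\int_\Omega f u = \int_\Omega \varphi_1 u = 0$ means $\lambda_f^*$ is the infimum of the Rayleigh quotient over the $L^2$-orthogonal complement of $\varphi_1$; by the Courant--Fischer variational characterisation this infimum is exactly the second eigenvalue $\lambda_2$, giving $\lambda_f^* = \lambda_2$, and the equality $\lambda_f = \lambda_2$ then follows from the Fredholm-alternative discussion in the introduction (solvability at $\lambda_2$ fails once $\int_\Omega\varphi_1\varphi_2 = 0$ is combined with the structure of the problem, pinning $\lambda_f = \lambda_2$). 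Finally, for \ref{lem:l<l<l:3}, I would construct $f_n$ concentrating its mass so that the orthogonality constraint $\int_\Omega f_n u = 0$ becomes an ever weaker restriction in the relevant direction: taking $f_n$ supported on a shrinking set away from where $\varphi_1$ is large (or more precisely designing $f_n$ so that test functions close to $\varphi_1$ become admissible), one makes the constrained infimum converge down to the unconstrained infimum $\lambda_1$. The cleanest realisation is to pick $f_n$ so that one can use a test function $u_n = \varphi_1 + \varepsilon_n w_n$ with $\varepsilon_n \to 0$ satisfying $\int_\Omega f_n u_n = 0$, whence the Rayleigh quotient tends to $\lambda_1$; verifying admissibility and the limit is routine once $f_n$ is chosen appropriately.
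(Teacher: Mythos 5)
The fatal gap is in your treatment of item (iii). Your plan hinges on the claim that moving the ratio $\alpha/\beta$ in $v=\alpha\varphi_2^++\beta\varphi_2^-$ away from the balanced value strictly decreases the Rayleigh quotient, i.e.\ that $g(s)=$ quotient of $\varphi_2^++s\varphi_2^-$ has $g'(s_0)\neq 0$. This is false: because $\varphi_2^\pm$ have disjoint supports and each satisfies $\int_\Omega|\nabla\varphi_2^\pm|^p\,dx=\lambda_2\int_\Omega|\varphi_2^\pm|^p\,dx$, one has
\begin{equation*}
g(s)=\frac{\lambda_2 A+\lambda_2 s^p B}{A+s^p B}\equiv\lambda_2,
\qquad
A=\int_\Omega|\varphi_2^+|^p\,dx,\quad B=\int_\Omega|\varphi_2^-|^p\,dx,
\end{equation*}
identically in $s>0$, so $g'\equiv 0$. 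Indeed, this constancy is exactly what your own proof of (ii) uses to conclude that the admissible unbalanced combination has quotient \emph{equal} to $\lambda_2$; no test function built from $\varphi_2^\pm$ can ever detect the strict inequality. The paper's argument for (iii) is of a completely different nature: assume $\lambda_f^*=\lambda_2$; then the unbalanced admissible combination $w=\alpha\varphi_2^++\varphi_2^-$, with $\alpha\neq 1$ forced precisely by $\int_\Omega f\varphi_2\,dx\neq 0$, \emph{is a minimizer}; the Lagrange multiplier rule then shows that $w$ is either a second eigenfunction or, after scaling, a solution of the boundary value problem, and in both cases $w$ enjoys $C^{1,\gamma}(\overline{\Omega})$-regularity; but the Hopf maximum principle at the interface between the two nodal domains (see \cite[Lemma 2.4]{DR}) is incompatible with such regularity when $\alpha\neq 1$ (and, in the degenerate case $\int_\Omega f\varphi_2^+\,dx=0$, the would-be minimizer $\varphi_2^+$ vanishes on an open set, which Hopf again forbids). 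Some argument of this regularity-plus-Hopf type is unavoidable here; without it, (iii) does not go through.

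Two secondary problems. In (v), your statement that ``solvability at $\lambda_2$ fails'' is backwards: since $\int_\Omega\varphi_1\varphi_2\,dx=0$, the Fredholm alternative gives solvability at $\lambda_2$. What actually pins $\lambda_f=\lambda_2$ is: (a) for $\lambda\in(\lambda_1,\lambda_2)$ the unique solution is the explicit function $u=-\varphi_1/(\lambda-\lambda_1)<0$, whence $\lambda_f\geq\lambda_2$; (b) at $\lambda=\lambda_2$ the solutions are $u+\alpha\varphi_2$, which change sign for $|\alpha|$ large, whence $\lambda_f\leq\lambda_2$. In (vi), the admissibility you call routine is not: with $f_n\geq 0$ and $\varphi_1>0$, the constraint $\int_\Omega f_n u_n\,dx=0$ forces $u_n$ to be negative somewhere on $\mathrm{supp}\,f_n$, and a perturbation $\varepsilon_n w_n\to 0$ in $W_0^{1,p}(\Omega)$ cannot achieve this on a fixed compact subset when $p>N$ (by the Morrey embedding, small gradient norm implies small sup norm). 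The paper avoids this by first truncating: it approximates $\varphi_1$ by compactly supported $v_n\geq 0$, places a negative bump $\beta_n\xi_n$ in a ball where $v_n\equiv 0$ (so there is no positive background to overcome), and then tunes the value of $f_n$ on that ball to satisfy the constraint exactly. Items (i) and (ii) are correct and match the paper; your plan for (iv) (direct test-function computation for a symmetric $f$) is viable and differs from the paper's route (which solves the ODE explicitly and finds $\lambda_0\in(3,3.5)$ where $\int_0^\pi f u_{\lambda_0}\,dx=0$), but it remains a sketch until the explicit computation is carried out.
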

\begin{proof}
	\ref{lem:l<l<l:0} The existence of a minimizer for $\lambda_f^*$ can be easily proved by standard arguments. By definition, any minimizer $u$ satisfies $u\not\equiv 0$ and $\int_\Omega f u \,dx = 0$.
	
	\ref{lem:l<l<l:1} Since $f \geq 0$, $f \not\equiv 0$, and the first eigenfunction $\varphi_1$ is strictly positive,  \cite{vaz}, the inequality $\lambda_1 < \lambda^*_f$ directly follows from assertion \ref{lem:l<l<l:0}.
	Let us show that $\lambda^*_f \leq \lambda_2$.
	Fix any second eigenfunction $\varphi_2 = \varphi_2^+ + \varphi_2^-$. 
	Assume first that $\int_\Omega f \varphi_2^\pm \,dx \neq 0$. Then we can find $\alpha > 0$ such that
	$$
	\int_\Omega f (\alpha \varphi_2^+ + \varphi_2^-) \, dx = 
	\alpha \int_\Omega f \varphi_2^+ \, dx + 
	\int_\Omega f \varphi_2^- \, dx =
	0.
	$$
	Since $\varphi_2^\pm \in W_0^{1,p}(\Omega)$, cf.\ \cite[Lemma 7.6]{giltrud}, $\alpha \varphi_2^+ + \varphi_2^-$ is admissible for the minimization problem \eqref{eq:l*} defining $\lambda_f^*$. 
	Therefore, from
	\begin{equation}
	\label{eq:phi2}
	\int_\Omega |\nabla \varphi_2^\pm|^p \, dx = \lambda_2 \int_\Omega |\varphi_2^\pm|^p \, dx
	\end{equation}
	we deduce that
	\begin{equation}\label{eq:lf<l2}
	\lambda^*_f 
	\leq 
	\frac{\alpha^p \int_\Omega |\nabla \varphi_2^+|^p \, dx + \int_\Omega |\nabla \varphi_2^-|^p \, dx}{\alpha^p \int_\Omega |\varphi_2^+|^p \, dx + \int_\Omega |\varphi_2^-|^p \, dx} = \lambda_2.
	\end{equation}
	Assume now that $\int_\Omega f \varphi_2^+ \,dx = 0$. In this case, $\varphi_2^+$ is admissible for \eqref{eq:l*}, and hence \eqref{eq:phi2} again leads to the desired inequality $\lambda_f^* \leq \lambda_2$. The remaining case $\int_\Omega f \varphi_2^- \,dx = 0$ is similar.
	
	\ref{lem:l<l<l:4}
	Suppose, by contradiction, that there exists a second eigenfunction $\varphi_2$ such that $\int_\Omega f \varphi_2 \, dx \neq 0$ and $\lambda_f^* = \lambda_2$. 
	Assume first that $\int_\Omega f \varphi_2^\pm \, dx \neq 0$. 
	Arguing as in assertion \ref{lem:l<l<l:1}, we can find $\alpha>0$, $\alpha \neq 1$, such that $\int_\Omega f (\alpha \varphi_2^+ + \varphi_2^-) \,dx = 0$, and hence, recalling that $\lambda_f^* = \lambda_2$, we get from \eqref{eq:lf<l2} that $w = \alpha \varphi_2^+ + \varphi_2^-$ is a minimizer for $\lambda_f^*$. Thus, by the Lagrange multiplier rule, we obtain $\mu_1, \mu_2 \in \mathbb{R}$ such that $|\mu_1|+|\mu_2|>0$ and 
	\begin{equation}\label{eq:sol1}
	\mu_1 \left(\int_\Omega |\nabla w|^{p-2} (\nabla w, \nabla \xi) \, dx - \lambda^*_f \int_\Omega |w|^{p-2} w \xi \, dx \right)
	+
	\mu_2 \int_\Omega f \xi \, dx = 0,
	\quad
	\forall \xi \in W_0^{1,p}(\Omega).
	\end{equation}
	If we suppose that $\mu_1 = 0$, then $f \equiv 0$, which is impossible. 
	Therefore, $\mu_1 \neq 0$. 
	Moreover, $\mu_2 \neq 0$ as well. Indeed, if we suppose that $\mu_2=0$, then \eqref{eq:sol1} states that $\alpha \varphi_2^+ + \varphi_2^-$ is a second eigenfunction. However, since $\alpha \neq 1$, $C^{1,\gamma}(\overline{\Omega})$-regularity of both $\varphi_2 = \varphi_2^+ + \varphi_2^-$ and $\alpha \varphi_2^+ + \varphi_2^-$ contradicts the Hopf maximum principle at the boundary between nodal domains, see \cite[Lemma 2.4]{DR}.
	Thus, $\mu_1 \mu_2 \neq 0$	 implies that 
	$$
	v = \left|\frac{\mu_1}{\mu_2}\right|^\frac{1}{p-1} \text{sign}\left(\frac{\mu_1}{\mu_2}\right) (\alpha \varphi_2^+ + \varphi_2^-)
	$$
	is a solution of \eqref{D}. But this is again impossible in view of \cite[Lemma 2.4]{DR}, since $\alpha \neq 1$ and any solution of \eqref{D} belongs to $C^{1,\gamma}(\overline{\Omega})$. 
	Suppose now that $\int_\Omega f \varphi_2^+ \, dx = 0$. Thus, $w = \varphi_2^+$ is a minimizer for $\lambda_f^* = \lambda_2$ and hence it again satisfies \eqref{eq:sol1} with some Lagrange multipliers $\mu_1 \neq 0$ and $\mu_2 \in \mathbb{R}$. That is, either $w$ is a second eigenfunction (provided $\mu_2=0$) or a proper scaling of $w$ is a solution of \eqref{D} (provided $\mu_2 \neq 0$). In both cases, we get a contradiction, since the Hopf maximum principle implies $w \not\in C^{1,\gamma}(\overline{\Omega})$. 
	The case $\int_\Omega f \varphi_2^- \, dx = 0$ is similar.
	
	\ref{lem:l<l<l:45} 
	Let us consider the following simple example:
	\begin{equation}\label{eq:D-with-f}
	\left\{
	\begin{aligned}
	-u'' &= \lambda u + (1-\sin(x)) &&\text{in } (0, \pi),\\
	u(0)&=u(\pi)=0.
	\end{aligned}
	\right.
	\end{equation}
	Note that $f(x) = (1-\sin(x)) > 0$ a.e.\ in $(0,\pi)$. 
	Moreover, since $\varphi_2=\sin(2x)$, we have $\int_0^\pi f \varphi_2 \,dx =0$. 
	One can easily find an explicit solution $u_\lambda$ of \eqref{eq:D-with-f} and obtain that $\int_0^\pi f u_{\lambda_0} \, dx = 0$ for some $\lambda_0 \in (3,3.5) \subset (\lambda_1,\lambda_2)$. 
	Thus, $\lambda_f^* \leq \lambda_0 < \lambda_2$.	
	
	\ref{lem:l<l<l:2} 
	If $p=2$ and $f = \varphi_1$, then the definition \eqref{eq:l*} of $\lambda_f^*$ coincides with the definition of the second eigenvalue $\lambda_2$ by the Courant-Fisher variational principle, that is, $\lambda_f^* = \lambda_2$.
	The equality $\lambda_f = \lambda^*_f$ can be also observed easily by the Fredholm alternative. Indeed, consider the problem \eqref{D} with $p=2$ and $f=\varphi_1$, i.e., 
	\begin{equation}\label{eq:D-with-first-eigenfunction}
	\left\{
	\begin{aligned}
	-\Delta u &= \lambda u + \varphi_1(x) &&\text{in } \Omega,\\
	u&=0 &&\text{on } \partial \Omega.
	\end{aligned}
	\right.
	\end{equation}
	If we try to find a solution of \eqref{eq:D-with-first-eigenfunction} in the form $u = C \varphi_1$, where $C \in \mathbb{R}$, then we obtain that 
	$$
	u(x) = -\frac{1}{\lambda-\lambda_1} \, \varphi_1(x),
	\quad 
	x \in \Omega.
	$$
	This shows that $u$ is strictly negative whenever $\lambda > \lambda_1$. By the Fredholm alternative, $u$ is the unique solution of \eqref{eq:D-with-first-eigenfunction} for $\lambda \in (\lambda_1,\lambda_2)$, and if $\lambda = \lambda_2$, then $u + \alpha \varphi_2$ is a solution of \eqref{eq:D-with-first-eigenfunction} for any $\alpha \in \mathbb{R}$ and $\varphi_2$. Since each $\varphi_2$ is sign-changing, we can take $|\alpha_0|$ large enough in order to conclude that $u + \alpha_0 \varphi_2$ also changes sign. That is, \ref{AMP} is valid with $\lambda_f = \lambda_f^* = \lambda_2$. 
	
	\ref{lem:l<l<l:3} 
	Since $\varphi_1 \in W_0^{1,p}(\Omega)$, there exists a sequence $\{v_n\} \subset C_0^\infty(\Omega)$ such that $v_n \to \varphi_1$ strongly in $W_0^{1,p}(\Omega)$, that is,
	\begin{equation}\label{eq:convergence1}
	\int_\Omega |\nabla v_n|^p \, dx \to 
	\int_\Omega |\nabla \varphi_1|^p \, dx
	\quad \text{and} \quad
	\int_\Omega |v_n|^p \, dx \to 
	\int_\Omega |\varphi_1|^p \, dx
	\end{equation}
	as $n \to +\infty$.
	Moreover, we can assume that each $v_n \geq 0$.
	
	Since $\text{supp}\, v_n$ is compactly contained inside of $\Omega$, we can find a sufficiently small ball $B_n \subset \Omega \setminus \text{supp}\, v_n$. 
	Let $\xi_n \in C_0^\infty(\Omega)$ be such that $\xi_n \leq 0$, $\text{supp}\, \xi_n \subset B_n$, and $\int_\Omega |\nabla \xi_n|^p \, dx = 1$ for each $n \in \mathbb{N}$.
	Taking any sequence $\{\beta_n\} \subset (0,+\infty)$ such that $\beta_n \to 0$ as $n \to +\infty$, we deduce from \eqref{eq:convergence1} that 
	\begin{equation}\label{eq:limit-to-phi1}
	\frac{\int_\Omega |\nabla (v_n + \beta_n \xi_n)|^p \, dx}{\int_\Omega |v_n + \beta_n \xi_n|^p \, dx} 
	=
	\frac{\int_\Omega |\nabla v_n|^p \, dx + \beta_n^p}{\int_\Omega |v_n|^p \, dx + \beta_n^p \int_\Omega |\xi_n|^p \, dx}
	\to 
	\lambda_1
	\quad
	\text{as }
	n \to +\infty.
	\end{equation}
	
	Now, we take for each $n \in \mathbb{N}$ any function $f_n \in L^\infty(\Omega)$ such that $f_n = 1$ on $\text{supp}\, v_n$, $f_n = a_n > 0$ on $B_n$, and $f_n \geq 0$ on $\Omega \setminus \left(\text{supp}\, v_n \cup B_n\right)$. Adjusting a constant $a_n>0$ in such a way that
	$$
	\int_\Omega f_n (v_n + \beta_n \xi_n) \, dx = 
	\int_{\text{supp}\, v_n} v_n \, dx + 
	a_n \,\beta_n \int_{B_n} \xi_n \, dx = 0,
	$$
	we make $v_n + \beta_n \xi_n$ admissible for \eqref{eq:l*}. Thus, \eqref{eq:limit-to-phi1} implies that $\lambda_{f_n}^* \to \lambda_1$ as $n \to +\infty$. 
\end{proof}

\smallskip
The following important property of $\lambda^*_f$ can be obtained directly from \eqref{eq:l*}.
\begin{lemma}\label{lem:H>0}
	Let $p>1$, and let $u \in W_0^{1,p}(\Omega)\setminus \{0\}$ be such that $\int_\Omega f u \, dx = 0$. 
	If $\lambda < \lambda^*_f$, then $H_\lambda(u) > 0$.
	If $\lambda = \lambda^*_f$, then $H_\lambda(u) \geq 0$. 
\end{lemma}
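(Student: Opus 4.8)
The plan is to read the conclusion directly off the variational definition \eqref{eq:l*} of $\lambda_f^*$. The hypotheses state precisely that $u$ is admissible in the minimization problem defining $\lambda_f^*$: indeed, $u \in W_0^{1,p}(\Omega)\setminus\{0\}$ together with $\int_\Omega f u \, dx = 0$ are exactly the two constraints appearing in \eqref{eq:l*}. Hence the Rayleigh quotient of $u$ cannot lie below the infimum, so
$$
\frac{\int_\Omega |\nabla u|^p \, dx}{\int_\Omega |u|^p \, dx} \geq \lambda_f^*,
$$
where the denominator is strictly positive because $u \not\equiv 0$. Clearing the denominator yields $\int_\Omega |\nabla u|^p \, dx \geq \lambda_f^* \int_\Omega |u|^p \, dx$.

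From here I would insert $\lambda_f^*$ into $H_\lambda(u)$ and compare with $\lambda$:
$$
H_\lambda(u) = \int_\Omega |\nabla u|^p \, dx - \lambda \int_\Omega |u|^p \, dx \geq \left(\lambda_f^* - \lambda\right) \int_\Omega |u|^p \, dx.
$$
Both assertions then follow by inspecting the sign of $\lambda_f^* - \lambda$. If $\lambda < \lambda_f^*$, the factor $\lambda_f^* - \lambda$ is strictly positive and $\int_\Omega |u|^p \, dx > 0$, so the right-hand side is strictly positive, giving $H_\lambda(u) > 0$. If $\lambda = \lambda_f^*$, the factor vanishes and the lower bound reduces to $0$, giving $H_\lambda(u) \geq 0$.

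There is no genuine obstacle here: the statement is simply a reformulation of the infimum characterization of $\lambda_f^*$. The only points requiring care are that admissibility in \eqref{eq:l*} demands both the orthogonality condition $\int_\Omega f u \, dx = 0$ and $u \not\equiv 0$, each of which is furnished by hypothesis, and that $\int_\Omega |u|^p \, dx \neq 0$, which legitimizes the division above and underlies the strict inequality in the first case. Note also that attainment of the infimum (Lemma \ref{lem:l<l<l}\ref{lem:l<l<l:0}) is not needed; the defining property of the infimum alone suffices.
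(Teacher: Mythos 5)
Your proof is correct and coincides with the paper's own argument: the paper states Lemma \ref{lem:H>0} ``can be obtained directly from \eqref{eq:l*}'' and offers no further detail, and your expansion --- admissibility of $u$ in the infimum, hence $\int_\Omega |\nabla u|^p \, dx \geq \lambda_f^* \int_\Omega |u|^p \, dx$, then comparison of $\lambda$ with $\lambda_f^*$ --- is exactly that direct argument. Your closing remarks (that $\int_\Omega |u|^p \, dx > 0$ justifies the strict inequality, and that attainment of the infimum is not needed) are accurate.
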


\section{Auxiliary results}\label{sec:auxiliary}
Let us state several auxiliary results which will be used in Section \ref{sec:proofs} below. 
Note that any solution of \eqref{D} belongs to the Nehari manifold
$$
\mathcal{N}_\lambda := \left\{
w \in W_0^{1,p}(\Omega) \setminus \{0\}:~
H_\lambda(w) = \int_\Omega f w \, dx
\right\}.
$$
Since we assume that the function $f$ is fixed, we do not indicate the dependence of the Nehari manifold on $f$ by an additional index in the notation $\mathcal{N}_\lambda$. 
Let us collect some basic properties of $\mathcal{N}_\lambda$.
\begin{lemma}\label{lem:neh}
	Let $p>1$. 
	The following assertions hold:
	\begin{enumerate}[label={\rm(\roman*)}]
		\item\label{lem:neh:3}
		If $u \in W_0^{1,p}(\Omega)$ satisfies 
		\begin{equation}\label{eq:lem:neh:3}
		H_\lambda(u) 
		\cdot
		\int_\Omega f u \, dx > 0,
		\end{equation}
		then there exists a unique $t_u > 0$ such that $t_u u \in \mathcal{N}_\lambda$ and
		\begin{equation}\label{eq:tu}
		t_u = \frac{\left|\int_\Omega f u \, dx\right|^\frac{1}{p-1}}{\left|H_\lambda(u)\right|^\frac{1}{p-1}},
		\quad
		E_\lambda(t_u u) = 
		\left(\frac{1}{p}-1\right) 
		\frac{\left|\int_\Omega f u \, dx\right|^\frac{p}{p-1}}{\left|H_\lambda(u)\right|^\frac{1}{p-1}}\, \mathrm{sign}\left(H_\lambda(u)\right).
		\end{equation}
		Moreover, if $H_\lambda(u) < 0$, then $t_u$ is a point of global maximum of $E_\lambda(tu)$ with respect to $t>0$. 
		If $H_\lambda(u) > 0$, then $t_u$ is a point of global minimum of $E_\lambda(tu)$ with respect to $t>0$. 
		\item\label{lem:neh:2} 
		$\mathcal{N}_\lambda \neq \emptyset$ for any $\lambda \in \mathbb{R}$.
		\item\label{lem:neh:1}
		If $u \in \mathcal{N}_\lambda$, then $\left.\frac{\partial}{\partial t} E_\lambda(t u) \right|_{t=1} = 0$ and 
		\begin{equation}\label{eq:EonNeh}
		E_\lambda(u) 
		= \left(\frac{1}{p}-1\right) 
		H_\lambda(u)
		=
		\left(\frac{1}{p}-1\right) \int_\Omega f u \, dx.
		\end{equation}
	\end{enumerate}
\end{lemma}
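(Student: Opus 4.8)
The plan is to exploit the homogeneity of the two terms defining $E_\lambda$. Since $H_\lambda$ is $p$-homogeneous and the linear functional $u \mapsto \int_\Omega f u\,dx$ is $1$-homogeneous, for any fixed $u$ and $t > 0$ we have the fibering map $g(t) := E_\lambda(tu) = \frac{t^p}{p}\,H_\lambda(u) - t\int_\Omega f u\,dx$, a single-variable function whose analysis yields all three assertions. Throughout I abbreviate $A := H_\lambda(u)$ and $B := \int_\Omega f u\,dx$.

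For \ref{lem:neh:3}, the membership $tu \in \mathcal{N}_\lambda$ amounts to $t^p A = t B$, i.e.\ $t^{p-1} = B/A$ for $t > 0$. Since $s \mapsto s^{p-1}$ is strictly increasing on $(0,+\infty)$ for $p>1$, this equation has a unique positive root precisely when $B/A > 0$, which is exactly the hypothesis \eqref{eq:lem:neh:3}; solving gives $t_u = (|B|/|A|)^{1/(p-1)}$, matching the stated formula for $t_u$. Substituting $t_u$ into $g$ and using $A t_u^{p-1} = B$ to replace $A t_u^p$ by $B t_u$, I obtain $E_\lambda(t_u u) = (\frac{1}{p} - 1)\, B t_u$; writing $B = \operatorname{sign}(A)\,|B|$ (valid since $AB > 0$) and inserting $t_u$ produces the sign factor $\operatorname{sign}(H_\lambda(u))$ and the exponent $p/(p-1)$ in \eqref{eq:tu}. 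For the extremal character, I differentiate twice: $g''(t) = (p-1)\,t^{p-2} A$, which keeps the constant sign of $A$ on all of $(0,+\infty)$. Hence $g$ is strictly concave when $H_\lambda(u) < 0$ and strictly convex when $H_\lambda(u) > 0$; in either case $t_u$ is its unique critical point, so it is the global maximum, respectively minimum, of $E_\lambda(tu)$ over $t > 0$.

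Assertion \ref{lem:neh:2} follows from \ref{lem:neh:3} once I produce, for an arbitrary $\lambda \in \mathbb{R}$, a single $u_0$ satisfying \eqref{eq:lem:neh:3}. Here the idea is that the Rayleigh quotient is unbounded above: choosing $u_0$ to be a nonnegative bump concentrated in a small ball around a point where $f > 0$ forces $H_\lambda(u_0) > 0$ (the gradient term dominates as the support shrinks) while simultaneously $\int_\Omega f u_0\,dx > 0$. Then $A B > 0$, and \ref{lem:neh:3} supplies $t_{u_0} u_0 \in \mathcal{N}_\lambda$. Finally, \ref{lem:neh:1} is immediate: on $\mathcal{N}_\lambda$ one has $A = B$ by definition, so $g'(1) = A - B = 0$, and substituting $B = A$ into $E_\lambda(u) = \frac{1}{p} A - B$ gives $E_\lambda(u) = (\frac{1}{p} - 1) A = (\frac{1}{p}-1) B$, which is \eqref{eq:EonNeh}.

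The computations are elementary; the only points requiring care are the sign bookkeeping in \ref{lem:neh:3} --- ensuring $t_u > 0$ and correctly extracting $\operatorname{sign}(H_\lambda(u))$ --- and the explicit construction in \ref{lem:neh:2}, where one must confirm that concentrating the support makes the gradient term dominate for the given $\lambda$ while keeping $\int_\Omega f u_0\,dx$ strictly positive. I regard the latter as the main (though minor) obstacle.
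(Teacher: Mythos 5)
Your proposal is correct and takes essentially the same approach as the paper: both analyze the fibering map $t \mapsto E_\lambda(tu) = \frac{t^p}{p}H_\lambda(u) - t\int_\Omega f u \, dx$, solve $t^{p-1}H_\lambda(u) = \int_\Omega f u \, dx$ for the unique $t_u>0$ in assertion (i), produce a small-support test function with $\int_\Omega f u\,dx > 0$ for assertion (ii), and read off assertion (iii) directly from the Nehari constraint. The one point you flag as an obstacle in (ii) --- that shrinking the support forces $H_\lambda(u_0)>0$ --- is exactly what the paper settles by invoking the Poincar\'e inequality, whose constant degenerates as the support shrinks, so your argument has no genuine gap.
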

\begin{proof}
	To prove assertion \ref{lem:neh:3}, for fixed $u \in W_0^{1,p}(\Omega)$ consider the fibering functional	
	$$
	E_\lambda(tu) = \frac{t^p}{p} 
	H_\lambda(u)
	-
	t \int_\Omega f u \, dx, 
	\quad
	t > 0.
	$$
	If $u$ satisfies \eqref{eq:lem:neh:3}, then we can find $t_u > 0$ such that $\left.\frac{\partial}{\partial t}E_\lambda(tu)\right|_{t=t_u}=0$, which yields $t_u u \in \mathcal{N}_\lambda$. Other properties of $t_u$ trivially follow from the definition of $E_\lambda(tu)$.
	
	Let us prove assertion \ref{lem:neh:2}. Since $f \not\equiv 0$, we can find a function $u \in C_0^\infty(\Omega)$ with the support of arbitrarily small measure such that $\int_\Omega f u \,dx \neq 0$, or, without loss of generality, $\int_\Omega f u \,dx > 0$. 
	By the Poincar\'e inequality \cite[(7.44)]{giltrud} we have
	$$
	\int_\Omega |\nabla u|^p \,dx \geq \left(\frac{\omega_N}{|\text{supp}\, u|}\right)^\frac{p}{N} \int_\Omega |u|^p \,dx,
	$$
	where $\omega_N$ is the volume of a unit ball in $\mathbb{R}^N$.
	Therefore, for any fixed $\lambda \in \mathbb{R}$ we can ask for the support of $u$ to be of sufficiently small measure in order to get $H_\lambda(u)>0$. Then, assertion \ref{lem:neh:2} follows by applying  assertion \ref{lem:neh:3} to $u$.
	
	Assertion \ref{lem:neh:1} is trivial. 	
\end{proof}

The following lemma is a consequence of \cite[Theorem 2.1]{AH} and \cite[Theorem 5]{vaz}.
\begin{lemma}\label{lem:nonnegative}
	Let $p>1$ and $\lambda > \lambda_1$. Then \eqref{D} does not possess nonnegative solutions. That is, any solution is either nonpositive or sign-changing.
\end{lemma}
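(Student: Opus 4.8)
The plan is to argue by contradiction, combining the strong maximum principle with Picone's inequality (this is exactly what the cited results \cite[Theorem 5]{vaz} and \cite[Theorem 2.1]{AH} are designed to supply). Suppose \eqref{D} possesses a nonnegative solution $u \geq 0$. Since $f \not\equiv 0$, we must have $u \not\equiv 0$, for otherwise the equation would force $f \equiv 0$. My first step would be to upgrade $u \geq 0$ to $u > 0$ in $\Omega$. Because $\lambda > \lambda_1 > 0$ and $u \geq 0$, $f \geq 0$, the right-hand side satisfies $\lambda |u|^{p-2}u + f = \lambda u^{p-1} + f \geq 0$, so $u$ is a nontrivial nonnegative $p$-superharmonic function. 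The strong maximum principle \cite[Theorem 5]{vaz} then yields $u > 0$ in $\Omega$ together with the Hopf boundary estimate $\partial u/\partial \nu < 0$ on $\partial\Omega$.

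Next I would apply Picone's inequality \cite[Theorem 2.1]{AH} to the strictly positive functions $\varphi_1 > 0$ and $u > 0$. The test function $\varphi_1^p/u^{p-1}$ belongs to $W_0^{1,p}(\Omega)$: the $C^{1,\gamma}(\overline\Omega)$-regularity of both $\varphi_1$ and $u$, together with the Hopf estimates making $\varphi_1$ and $u$ each comparable to the distance to $\partial\Omega$ near the boundary, keeps the quotient bounded and vanishing on $\partial\Omega$. Using the weak formulation of \eqref{D} with this test function, and the identity $\int_\Omega |\nabla\varphi_1|^p\,dx = \lambda_1 \int_\Omega \varphi_1^p\,dx$, the integrated Picone inequality gives
$$
0 \leq \int_\Omega |\nabla \varphi_1|^p \, dx - \int_\Omega |\nabla u|^{p-2}\nabla u \cdot \nabla\!\left(\frac{\varphi_1^p}{u^{p-1}}\right) dx = (\lambda_1 - \lambda)\int_\Omega \varphi_1^p \, dx - \int_\Omega f\,\frac{\varphi_1^p}{u^{p-1}}\, dx.
$$

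From here the contradiction is immediate. Since $\lambda > \lambda_1$ and $\varphi_1 > 0$, the term $(\lambda_1-\lambda)\int_\Omega \varphi_1^p\,dx$ is strictly negative, while $f \geq 0$, $\varphi_1 > 0$, and $u > 0$ force $\int_\Omega f\,\varphi_1^p u^{1-p}\,dx \geq 0$; hence the right-hand side is strictly negative, contradicting the displayed inequality. Therefore \eqref{D} admits no nonnegative solution, and consequently every solution is either nonpositive or sign-changing, as claimed.

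I expect the only delicate point to be the admissibility of the test function $\varphi_1^p/u^{p-1}$ up to the boundary, i.e.\ justifying that it lies in $W_0^{1,p}(\Omega)$ and that the Picone computation is legitimate there. This is precisely where the $C^{1,\gamma}$-regularity theory and the Hopf boundary lemma built into \cite[Theorem 5]{vaz} are used. Should one wish to bypass the boundary analysis entirely, the same conclusion follows by applying Picone's inequality with $v = u + \varepsilon$ in place of $u$ (so that the denominator is bounded below by $\varepsilon^{p-1}$ and admissibility is trivial), testing \eqref{D} with $\varphi_1^p/(u+\varepsilon)^{p-1}$, and passing to the limit $\varepsilon \to 0^+$ by monotone convergence; the limiting inequality is the one displayed above, yielding the contradiction.
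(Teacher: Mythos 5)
Your proof is correct and follows exactly the route the paper intends: the paper gives no written proof, stating only that the lemma is a consequence of \cite[Theorem 2.1]{AH} (Picone's identity) and \cite[Theorem 5]{vaz} (strong maximum principle), which is precisely the argument you have fleshed out, including the standard $\varepsilon$-regularization $\varphi_1^p/(u+\varepsilon)^{p-1}$ that handles the admissibility of the test function.
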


\begin{lemma}\label{lem:sign-changing}
	Let $p>1$. If $u$ is a solution of \eqref{D} and $\int_\Omega f u \, dx = 0$, then $u$ is sign-changing.
\end{lemma}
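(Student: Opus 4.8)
The plan is to argue by contradiction, combining the two structural facts already available: being a critical point of $E_\lambda$ forces $H_\lambda(u)=\int_\Omega fu\,dx$, and Lemma~\ref{lem:nonnegative} forbids nonnegative solutions as soon as $\lambda>\lambda_1$.

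First I would record the elementary reductions. Since $f\not\equiv 0$, the trivial function is not a solution, so $u\not\equiv 0$. As $u$ is a critical point of $E_\lambda$ we have $H_\lambda(u)=\int_\Omega fu\,dx$, which by hypothesis equals $0$; hence $\int_\Omega|\nabla u|^p\,dx=\lambda\int_\Omega|u|^p\,dx$ with $u\ne 0$, giving $\lambda\ge\lambda_1$. To upgrade this to $\lambda>\lambda_1$ I would invoke Lemma~\ref{lem:H>0}: if $\lambda<\lambda_f^*$ it would yield $H_\lambda(u)>0$, contradicting $H_\lambda(u)=0$, so $\lambda\ge\lambda_f^*>\lambda_1$ by Lemma~\ref{lem:l<l<l}\ref{lem:l<l<l:1}. (Equivalently, $\lambda=\lambda_1$ would make $u$ a first eigenfunction, forcing $f\equiv 0$.) Now suppose, for contradiction, that $u$ is not sign-changing. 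By Lemma~\ref{lem:nonnegative} it cannot be nonnegative, so $u\le 0$ and $u\not\equiv 0$. Because $f\ge0$ and $u\le 0$, the vanishing of $\int_\Omega fu\,dx$ forces $fu=0$ a.e., so $f=0$ a.e.\ on $\Omega_0:=\{u<0\}$.

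Next I would localise on $\Omega_0$. If $\Omega_0=\Omega$, then $u<0$ everywhere and $fu=0$ gives $f\equiv 0$, a contradiction; hence $\Omega_0$ is a nonempty proper open subset of the connected set $\Omega$, and therefore $\partial\Omega_0\cap\Omega\ne\emptyset$. On $\Omega_0$ the term $f$ vanishes, so $w:=-u$ satisfies $-\Delta_p w=\lambda w^{p-1}\ge 0$ with $w>0$; in particular $w$ is a positive $p$-supersolution in $\Omega_0$ vanishing on $\partial\Omega_0\cap\Omega$.

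The heart of the argument, and the step I expect to be the main obstacle, is to derive a contradiction at the free boundary $\partial\Omega_0\cap\Omega$ without any a priori regularity of that boundary. Here I would combine the $C^{1,\gamma}(\overline\Omega)$-regularity of $u$ with a Hopf-type boundary-point lemma for the $p$-Laplacian. Concretely, pick $y_0\in\partial\Omega_0\cap\Omega$, a small ball $B(y_0,\rho)\subset\Omega$, and a point $x_1\in\Omega_0\cap B(y_0,\rho/2)$; the maximal ball $B:=B(x_1,r)$ with $r=\operatorname{dist}(x_1,\partial\Omega_0)$ lies in $\Omega_0$, touches $\partial\Omega_0$ at some interior point $x_0\in\Omega$, and automatically satisfies the interior ball condition there. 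Since $w>0$ in $B$, $w(x_0)=0$, and $-\Delta_p w\ge 0$ in $B$, the Hopf lemma (of the same type used in the proof of Lemma~\ref{lem:l<l<l}\ref{lem:l<l<l:4}, cf.\ \cite{vaz,DR}) gives $\partial w/\partial\nu(x_0)<0$, hence $\nabla u(x_0)=-\nabla w(x_0)\ne 0$. But $u\in C^1(\overline\Omega)$ attains its interior maximum value $0$ at $x_0\in\Omega$, so $\nabla u(x_0)=0$, a contradiction. Thus $u$ must be sign-changing. The only delicate point is producing an interior ball at an interior boundary point of the possibly irregular set $\Omega_0$, which the nearest-point construction supplies.
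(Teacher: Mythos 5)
Your proof is correct and takes essentially the same route as the paper's: establish $\lambda \geq \lambda_f^* > \lambda_1$ from $H_\lambda(u)=0$ via Lemma~\ref{lem:H>0}, exclude nonnegative solutions by Lemma~\ref{lem:nonnegative}, and rule out $u \leq 0$ by applying the Hopf boundary-point lemma on a ball inside $\{u<0\}$ touching its boundary at a point of $\Omega$, contradicting $\nabla u = 0$ at an interior maximum. Your nearest-point construction of the touching ball merely fills in a detail the paper asserts without proof.
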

\begin{proof}
	Let $u$ be a solution of \eqref{D} for some $\lambda \in \mathbb{R}$ such that $\int_\Omega f u \, dx = 0$. Since $u \in \mathcal{N}_\lambda$, we have $H_\lambda(u)=0$, and hence Lemma \ref{lem:H>0} implies that $\lambda \geq \lambda_f^* > \lambda_1$. Therefore, $u$ is either nonpositive or sign-changing, as it follows from Lemma \ref{lem:nonnegative}. 
	Suppose, by contradiction, that $u \leq 0$.
	Since $\int_\Omega f u \, dx = 0$ and $f \geq 0$, we have $f \equiv 0$ on $A:= \{x \in \Omega:\, u(x) < 0\}$.
	Due to $C^{1,\gamma}(\overline{\Omega})$-regularity of $u$, the set $A$ is open. 
	Therefore, $\lambda$ and $u$ are the first eigenvalue and the first eigenfunction of \eqref{E} on the domain $A$, respectively. 
	Since $f$ is nontrivial, we have $f \not\equiv 0$ on $\Omega \setminus A$, which implies $\partial A \cap \Omega \neq \emptyset$. 
	Let a ball $B \subset \overline{A}$ and a point $x_0 \in \Omega$ be such that $x_0 \in \partial B \cap \partial A \cap \Omega$. Then $u(x_0)=0$, and by the Hopf maximum principle $\frac{\partial u(x_0)}{\partial \nu} > 0$ (see, e.g., \cite[Theorem 5]{vaz}), where $\nu$ is an outward unit normal to $\partial B$. However, since $u \leq 0$ in the whole of $\Omega$, we obtain a contradiction with $C^{1,\gamma}(\overline{\Omega})$-regularity of $u$. 
	Therefore, $u$ changes sign.
\end{proof}

Finally, the following lemma is a consequence of the definition of the second eigenvalue, see, e.g., \cite[Remark 4]{BP}.
\begin{lemma}\label{lem:second_eigenvalue}
	Let $p>1$. If there exists $u \in W_0^{1,p}(\Omega)$ such that $u^\pm \not\equiv 0$ and $H_\lambda(u^\pm) \leq 0$, then $\lambda \geq \lambda_2$.
\end{lemma}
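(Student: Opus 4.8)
The statement is a two-dimensional, disjoint-support instance of the mountain-pass (Courant--Fischer type) characterization of the second eigenvalue. Recall that for the $p$-Laplacian one has
\begin{equation*}
\lambda_2 = \inf_{\gamma \in \Gamma}\ \max_{s \in [-1,1]} \int_\Omega |\nabla \gamma(s)|^p\,dx,
\end{equation*}
where $\mathcal{S} := \{v \in W_0^{1,p}(\Omega): \int_\Omega |v|^p\,dx = 1\}$ and $\Gamma$ is the set of continuous paths $\gamma\colon[-1,1]\to\mathcal{S}$ with $\gamma(-1)=-\varphi_1$ and $\gamma(1)=\varphi_1$; this is precisely the variational content of the definition of $\lambda_2$ invoked in \cite{BP}. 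The plan is to produce a single admissible path $\gamma\in\Gamma$ along which the Rayleigh quotient $R(v):=\int_\Omega|\nabla v|^p\,dx\big/\int_\Omega|v|^p\,dx$ never exceeds $\lambda$; this forces $\lambda_2\le\lambda$, which is the assertion.

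First I would record that, since $u^\pm\not\equiv 0$, the hypothesis $H_\lambda(u^\pm)\le 0$ is exactly $R(u^+)\le\lambda$ and $R(u^-)\le\lambda$. The core of the argument is the segment in the two-dimensional cone spanned by $u^+$ and $u^-$. For $t\in[0,1]$ set $w_t:=(1-t)\,u^+ + t\,u^-$. Because $u^+\ge 0$ and $u^-\le 0$ have disjoint supports, the gradients $\nabla u^+$ and $\nabla u^-$ vanish a.e.\ off these supports, and the coefficients $(1-t),t$ are nonnegative; hence at a.e.\ point $|\nabla w_t|^p=(1-t)^p|\nabla u^+|^p+t^p|\nabla u^-|^p$ and $|w_t|^p=(1-t)^p|u^+|^p+t^p|u^-|^p$. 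Integrating and using $\int_\Omega|\nabla u^\pm|^p\,dx\le\lambda\int_\Omega|u^\pm|^p\,dx$ gives
\begin{equation*}
R(w_t)=\frac{(1-t)^p\int_\Omega|\nabla u^+|^p\,dx+t^p\int_\Omega|\nabla u^-|^p\,dx}{(1-t)^p\int_\Omega|u^+|^p\,dx+t^p\int_\Omega|u^-|^p\,dx}\le\lambda,\qquad t\in[0,1].
\end{equation*}
Here $w_t\not\equiv 0$ for every $t$, since $\int_\Omega|w_t|^p\,dx=(1-t)^p\int_\Omega|u^+|^p\,dx+t^p\int_\Omega|u^-|^p\,dx>0$, so $t\mapsto w_t\big/(\int_\Omega|w_t|^p\,dx)^{1/p}$ is a continuous path in $\mathcal{S}$ with $R\le\lambda$ joining the nonnegative function $\hat u^+:=u^+\big/(\int_\Omega|u^+|^p\,dx)^{1/p}$ to the nonpositive function $\hat u^-:=u^-\big/(\int_\Omega|u^-|^p\,dx)^{1/p}$.

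It remains to close this segment into a path of $\Gamma$ by joining $\hat u^+$ to $\varphi_1$ and $\hat u^-$ to $-\varphi_1$ while keeping $R\le\lambda$, and then to concatenate the three pieces to obtain $\gamma\in\Gamma$ with $\max_s R(\gamma(s))\le\lambda$. The hard part will be exactly this endpoint step, because for $p\neq 2$ a naive convex interpolation to $\varphi_1$ does not control $R$. The clean way around it is the topological reformulation of the characterization: for every $c\in(\lambda_1,\lambda_2)$ the sublevel set $\{v\in\mathcal{S}:R(v)\le c\}$ splits into exactly two connected components, the one containing $\varphi_1$ and its antipode containing $-\varphi_1$, since $\pm\varphi_1$ are the only critical points of $R$ on $\mathcal{S}$ with value below $\lambda_2$ and $\lambda_1$ is simple and isolated (this is where simplicity and isolation of $\lambda_1$ enter, through a positivity-preserving negative gradient deformation sending any nonnegative sub-$c$ function to $\varphi_1$). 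Granting this, if $\lambda<\lambda_2$ held, the segment constructed above would connect a nonnegative to a nonpositive function inside a single component of $\{R\le\lambda\}\cap\mathcal{S}$, contradicting that $\hat u^+$ and $\hat u^-$ lie in the two distinct antipodal components. Therefore $\lambda\ge\lambda_2$.
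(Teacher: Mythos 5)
Your overall strategy---the mountain-pass (Cuesta--de Figueiredo--Gossez) characterization of $\lambda_2$ combined with the disjoint-support segment---is precisely the standard argument underlying the paper's treatment, which offers no proof at all but simply cites \cite[Remark 4]{BP}. Your segment computation is correct and rigorous: disjointness of the supports of $u^+$ and $u^-$ gives $R(w_t)\le\lambda$ for every $t\in[0,1]$, and the normalized segment is a continuous path in the sphere joining $\hat u^+$ to $\hat u^-$. The genuine gap is the step you yourself call ``the hard part'': joining $\hat u^+$ to $\varphi_1$ and $\hat u^-$ to $-\varphi_1$ inside the sublevel set. You settle it by asserting that for $c\in(\lambda_1,\lambda_2)$ the set $\{v:\ R(v)\le c,\ \|v\|=1\}$ has exactly two connected components, justified by ``a positivity-preserving negative gradient deformation''. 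For $p\neq 2$ this is not available off the shelf: $W_0^{1,p}(\Omega)$ is not a Hilbert space, so descent arguments use pseudo-gradient flows, and nothing guarantees that a pseudo-gradient flow preserves the cone of nonnegative functions; moreover, concluding that each component contains a critical point requires the Palais--Smale condition and a deformation lemma, none of which you address. As written, the topological claim you invoke carries all the difficulty of the statement being proved, so this step is an assumption rather than a proof.

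The gap is repairable with one known tool, the ``hidden convexity'' of the Dirichlet energy (which is also the mechanism behind the simplicity proof for $\lambda_1$, cf.\ \cite{Lind}): if $v,w\in W_0^{1,p}(\Omega)$ are nonnegative with $\|v\|=\|w\|=1$, then $\sigma_t:=\bigl((1-t)v^p+t w^p\bigr)^{1/p}$ satisfies $\|\sigma_t\|=1$ and $\int_\Omega|\nabla\sigma_t|^p\,dx\le(1-t)\int_\Omega|\nabla v|^p\,dx+t\int_\Omega|\nabla w|^p\,dx$. Applying this to the pair $(\varphi_1,\hat u^+)$, and to $(\varphi_1,-\hat u^-)$ followed by negation, yields continuous paths inside $\{R\le\lambda\}$ from $\varphi_1$ to $\hat u^+$ and from $\hat u^-$ to $-\varphi_1$ (note that $\lambda\ge R(u^+)\ge\lambda_1$, so the convex combinations of Rayleigh quotients stay below $\lambda$). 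Concatenating these with your segment produces an admissible path with $\max_s R(\gamma(s))\le\lambda$, whence $\lambda_2\le\lambda$ directly from the minimax characterization---no component count, no flow, and no case distinction needed. With that substitution your proof is complete and coincides with the standard proof of the characterization that the paper imports by citation.
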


\section{Proofs of the main results}\label{sec:proofs}

First, we prove a weak form of Theorem \ref{thm:1} \ref{thm:1:2} by showing the nonstrict inequality $\lambda_f \leq \lambda_f^*$. Then, we prove Theorem \ref{thm:1} \ref{thm:1:1}, Proposition \ref{prop:q}, and Theorem \ref{thm:2}. For technical convenience, we postpone the completion of the proof of Theorem \ref{thm:1} \ref{thm:1:2} to the end of this section.

We start with the following lemma.  
Recall that $\lambda_f^*$ defined by \eqref{eq:l*} possesses a minimizer, see Lemma \ref{lem:l<l<l} \ref{lem:l<l<l:0}.

\begin{lemma}\label{prop:existence_zero}
	Let $p>1$. If $\lambda_f^* < \lambda_2$, then any minimizer $u$ for $\lambda_f^*$ is a sign-changing solution of {\renewcommand\nlab{\lambda_f^*}\eqref{D}} and satisfies $E_{\lambda_f^*}(u) = 0$.
\end{lemma}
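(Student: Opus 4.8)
The plan is to exploit the Lagrange multiplier characterization of minimizers for $\lambda_f^*$ and then use Lemma \ref{lem:second_eigenvalue} to force the energy to vanish. Let $u$ be any minimizer for $\lambda_f^*$. By Lemma \ref{lem:l<l<l} \ref{lem:l<l<l:0}, such $u$ exists, satisfies $u \not\equiv 0$, and obeys the constraint $\int_\Omega f u \, dx = 0$. First I would apply the Lagrange multiplier rule to the constrained minimization problem \eqref{eq:l*}: normalizing $\int_\Omega |u|^p \, dx = 1$, the functional $u \mapsto \int_\Omega |\nabla u|^p \, dx$ is minimized subject to the two constraints $\int_\Omega |u|^p \, dx = 1$ and $\int_\Omega f u \, dx = 0$. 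This yields multipliers $\mu_1, \mu_2 \in \mathbb{R}$, not both zero, such that
\begin{equation}\label{eq:lagrange_minimizer}
\mu_1 \left( \int_\Omega |\nabla u|^{p-2} (\nabla u, \nabla \xi) \, dx - \lambda_f^* \int_\Omega |u|^{p-2} u \xi \, dx \right) + \mu_2 \int_\Omega f \xi \, dx = 0, \quad \forall \xi \in W_0^{1,p}(\Omega).
\end{equation}
Testing with $\xi = u$ and using $\int_\Omega f u \, dx = 0$ together with the fact that $u$ realizes the quotient $\lambda_f^*$ (so the bracketed term vanishes) shows the equation is consistent; the key observation is that $\mu_1 \neq 0$, since $\mu_1 = 0$ would force $f \equiv 0$, contradicting $f \not\equiv 0$. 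Dividing through and rescaling $u$ by an appropriate constant, as in the proof of Lemma \ref{lem:l<l<l} \ref{lem:l<l<l:4}, I would produce a genuine weak solution of {\renewcommand\nlab{\lambda_f^*}\eqref{D}}.

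Next I would verify $E_{\lambda_f^*}(u) = 0$. Since the rescaled minimizer is a solution lying on the Nehari manifold $\mathcal{N}_{\lambda_f^*}$ and still satisfies $\int_\Omega f u \, dx = 0$, Lemma \ref{lem:neh} \ref{lem:neh:1} gives $E_{\lambda_f^*}(u) = (\tfrac{1}{p} - 1)\int_\Omega f u \, dx = 0$ immediately. The sign-changing property then follows from Lemma \ref{lem:sign-changing}, which states precisely that a solution of \eqref{D} with $\int_\Omega f u \, dx = 0$ must change sign. This handles the conclusion once we know the minimizer (after rescaling) is actually a solution.

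The main obstacle is the degenerate possibility $\mu_2 = 0$ in \eqref{eq:lagrange_minimizer}, because then the rescaled $u$ would be an eigenfunction of \eqref{E} at level $\lambda_f^*$ rather than a solution of the inhomogeneous problem \eqref{D}, and the Nehari argument above would not directly apply. This is exactly where the hypothesis $\lambda_f^* < \lambda_2$ becomes essential. If $\mu_2 = 0$, then $u$ is an eigenfunction for eigenvalue $\lambda_f^*$; but $u$ changes sign (any minimizer for $\lambda_f^*$ must, since $f \geq 0$, $f \not\equiv 0$ and $\int_\Omega f u \, dx = 0$ prevent $u$ from being signed, by an argument analogous to Lemma \ref{lem:sign-changing}), so $u^\pm \not\equiv 0$ with $H_{\lambda_f^*}(u^\pm) = 0$, and Lemma \ref{lem:second_eigenvalue} forces $\lambda_f^* \geq \lambda_2$, contradicting the hypothesis. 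Hence $\mu_2 \neq 0$, the rescaling yields a bona fide solution of \eqref{D}, and the preceding steps apply. I would take care to justify that a minimizer is necessarily sign-changing before invoking Lemma \ref{lem:second_eigenvalue}, using the strict positivity of $\varphi_1$ and the sign constraint on $f$ to rule out a one-signed minimizer.
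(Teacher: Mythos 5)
Your proposal follows the paper's own proof almost step for step: the Lagrange multiplier rule, the observation that $\mu_1 = 0$ would force $f \equiv 0$, the rescaling $\left|\mu_1/\mu_2\right|^{1/(p-1)}\mathrm{sign}(\mu_1/\mu_2)\,u$ to produce a genuine solution of \eqref{D} at $\lambda = \lambda_f^*$, the Nehari identity of Lemma \ref{lem:neh} \ref{lem:neh:1} to get $E_{\lambda_f^*}(u)=0$, and Lemma \ref{lem:sign-changing} for the sign change. The only place you diverge from the paper is in excluding the degenerate case $\mu_2 = 0$, and that is where your argument has a genuine gap.

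Your parenthetical claim that any minimizer for $\lambda_f^*$ must change sign ``since $f \geq 0$, $f \not\equiv 0$ and $\int_\Omega f u \, dx = 0$ prevent $u$ from being signed, by an argument analogous to Lemma \ref{lem:sign-changing}'' is false for a function that is merely admissible in \eqref{eq:l*}: orthogonality to a nonnegative $f$ does not force a sign change, since if $f$ vanishes on some open set $\omega \subset \Omega$, every nonnegative $u \in W_0^{1,p}(\Omega)$ supported in $\omega$ satisfies the constraint. The proof of Lemma \ref{lem:sign-changing} runs through $C^{1,\gamma}(\overline{\Omega})$-regularity and the Hopf lemma, both of which require $u$ to solve an equation --- and at this stage of your proof, which equation (if any) $u$ solves is precisely what is being decided, so invoking that argument for a bare minimizer is circular. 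The gap is local and repairable, because you only need the claim inside the case $\mu_2 = 0$, where $u$ does solve the eigenvalue problem \eqref{E} with eigenvalue $\lambda_f^*$: a one-signed eigenfunction is strictly signed in $\Omega$ by the strong maximum principle \cite{vaz} (equivalently, by \cite{AH} it must be a multiple of $\varphi_1$), whence $\int_\Omega f u \, dx \neq 0$, contradicting the constraint; so $u$ changes sign, $H_{\lambda_f^*}(u^\pm) = 0$ follows by testing the equation with $u^\pm$, and your appeal to Lemma \ref{lem:second_eigenvalue} then goes through. Your closing sentence gestures at the right ingredients, but they only work once you use that $u$ is an eigenfunction, not a bare minimizer. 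Note also that the paper avoids this entire discussion: once $\mu_2 = 0$ makes $\lambda_f^*$ an eigenvalue of \eqref{E}, Lemma \ref{lem:l<l<l} \ref{lem:l<l<l:1} and the hypothesis place it in $(\lambda_1, \lambda_2)$, and no eigenvalue can lie there by the very definition of $\lambda_2$ --- an immediate contradiction requiring no sign-changing argument at all.
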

\begin{proof}
	Let $u$ be a minimizer for $\lambda_f^*$. In particular, $u \not\equiv 0$ and $\int_\Omega f u \, dx = 0$. 
	By the Lagrange multiplier rule, there exist $\nu_1, \nu_2 \in \mathbb{R}$ such that $|\nu_1|+|\nu_2|>0$ and 
	\begin{equation}\label{eq:sol}
	\nu_1 \left(\int_\Omega |\nabla u|^{p-2} (\nabla u, \nabla \xi) \, dx - \lambda^*_f \int_\Omega |u|^{p-2} u \xi \, dx \right)
	+
	\nu_2 \int_\Omega f \xi \, dx = 0,
	\quad
	\forall \xi \in W_0^{1,p}(\Omega).
	\end{equation}
	Note that $\nu_1 \neq 0$, since otherwise \eqref{eq:sol} yields $f \equiv 0$, which contradicts our assumptions on $f$. 
	Suppose that $\nu_2 = 0$. In this case, \eqref{eq:sol} implies that $\lambda^*_f$ is an eigenvalue of \eqref{E} and $u$ is an eigenfunction associated with $\lambda_f^*$. 
	Recalling that $\lambda_1 < \lambda^*_f \leq \lambda_2$ by Lemma \ref{lem:l<l<l} \ref{lem:l<l<l:1}, we conclude that $\lambda^*_f = \lambda_2$. However, it contradicts our assumption $\lambda^*_f < \lambda_2$. Therefore $\nu_2 \neq 0$, and hence we deduce from \eqref{eq:sol} that the function 
	$$
	v = \left|\frac{\nu_1}{\nu_2}\right|^\frac{1}{p-1} \text{sign}\left(\frac{\nu_1}{\nu_2}\right) u
	$$
	is a solution of {\renewcommand\nlab{\lambda_f^*}\eqref{D}}. 
	Since $\int_\Omega f v \,dx = 0$ and $v \in \mathcal{N}_\lambda$, we get from Lemma \ref{lem:neh} \ref{lem:neh:1} that $E_{\lambda_f^*}(v)=0$, and Lemma \ref{lem:sign-changing} implies that $v$ is sign-changing.
\end{proof}

The following direct corollary of Lemma \ref{prop:existence_zero} gives us a weak form of Theorem \ref{thm:1} \ref{thm:1:2}.
\begin{cor}\label{cor:l<l}
	Let $p>1$. If $\lambda_f^* < \lambda_2$, then $\lambda_f \leq \lambda_f^*$.
\end{cor}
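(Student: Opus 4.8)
The plan is to argue by contradiction, exploiting the fact that the substantive work has already been carried out in Lemma \ref{prop:existence_zero}. Suppose, contrary to the claim, that $\lambda_f > \lambda_f^*$. By Lemma \ref{lem:l<l<l} \ref{lem:l<l<l:1} we always have $\lambda_f^* > \lambda_1$, so under this assumption the value $\lambda_f^*$ lies strictly inside the open interval $(\lambda_1, \lambda_f)$ on which the anti-maximum principle \ref{AMP} holds. Consequently, every solution of {\renewcommand\nlab{\lambda_f^*}\eqref{D}} would have to be strictly negative in $\Omega$.

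On the other hand, since $\lambda_f^* < \lambda_2$ by hypothesis, Lemma \ref{prop:existence_zero} provides a sign-changing solution of {\renewcommand\nlab{\lambda_f^*}\eqref{D}} (obtained from a minimizer for $\lambda_f^*$, which exists by Lemma \ref{lem:l<l<l} \ref{lem:l<l<l:0}). This contradicts the strict negativity forced by \ref{AMP} at $\lambda = \lambda_f^*$. Hence the assumption $\lambda_f > \lambda_f^*$ is untenable, and $\lambda_f \leq \lambda_f^*$ follows.

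I do not expect a serious obstacle in the corollary itself, because the analytic content — producing an honest solution (rather than merely an eigenfunction) and checking that it changes sign — is exactly what Lemma \ref{prop:existence_zero} supplies, and that is where the assumption $\lambda_f^* < \lambda_2$ is genuinely used, namely to exclude the eigenfunction alternative in the Lagrange multiplier relation. The only point requiring care is that \ref{AMP} is asserted on the \emph{open} interval $(\lambda_1, \lambda_f)$, so to derive the contradiction one must place $\lambda_f^*$ strictly between $\lambda_1$ and $\lambda_f$; this is precisely why $\lambda_f^* > \lambda_1$ is invoked. Upgrading this nonstrict bound to the strict inequality $\lambda_f < \lambda_f^*$ of Theorem \ref{thm:1} \ref{thm:1:2} is deferred and will require a genuinely separate argument.
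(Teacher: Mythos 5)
Your proof is correct and matches the paper's intended argument: the paper presents this as a ``direct corollary'' of Lemma \ref{prop:existence_zero}, with precisely your reasoning left implicit --- a sign-changing solution of {\renewcommand\nlab{\lambda_f^*}\eqref{D}} cannot coexist with \ref{AMP} holding at $\lambda_f^*$, which would be forced if $\lambda_f > \lambda_f^* > \lambda_1$. Your remarks on where $\lambda_f^* < \lambda_2$ enters and on the openness of the \ref{AMP} interval are also accurate.
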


Let us now prove Theorem \ref{thm:1} \ref{thm:1:1}.
Hereinafter, $\|u\| := \left(\int_\Omega |u|^p \,dx \right)^{1/p}$ stands for the norm of $u$ in $L^p(\Omega)$.
\begin{lemma}\label{lem:N=1}
	Let $p>1$. If $p=2$ or $N=1$, then $\lambda_f \leq \lambda_f^*$.
\end{lemma}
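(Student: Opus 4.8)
The plan is to dispose first of everything already covered by Corollary \ref{cor:l<l}. That corollary yields $\lambda_f \le \lambda_f^*$ whenever $\lambda_f^* < \lambda_2$, and since $\lambda_f^* \le \lambda_2$ always holds (Lemma \ref{lem:l<l<l}\ref{lem:l<l<l:1}), the only remaining case is $\lambda_f^* = \lambda_2$, where the claim reduces to $\lambda_f \le \lambda_2$. I would argue this by contradiction: if $\lambda_f > \lambda_2$, then \ref{AMP} holds throughout $(\lambda_1,\lambda_2]$, so every solution of \eqref{D} with $\lambda \le \lambda_2$ is strictly negative in $\Omega$. Consequently it suffices to produce a single solution of \eqref{D} at $\lambda = \lambda_2$ that is \emph{not} strictly negative (e.g.\ sign-changing); this contradiction forces $\lambda_f \le \lambda_2 = \lambda_f^*$.

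To build such a solution I would start, as in the proof of Lemma \ref{prop:existence_zero}, from a minimizer $u$ for $\lambda_f^*$ and the Lagrange condition \eqref{eq:sol} with multipliers $\nu_1 \ne 0$ and $\nu_2$. If $\nu_2 \ne 0$ for some minimizer, then a suitable scaling of that minimizer solves \eqref{D} at $\lambda_2$, and since it satisfies $\int_\Omega f u\,dx = 0$ it is sign-changing by Lemma \ref{lem:sign-changing}, and we are done. The genuine obstruction is the case in which $\nu_2 = 0$ for every minimizer, i.e.\ all minimizers are honest second eigenfunctions. Here one should note that, by the contrapositive of Lemma \ref{lem:l<l<l}\ref{lem:l<l<l:4}, the equality $\lambda_f^* = \lambda_2$ already forces $\int_\Omega f \varphi_2\,dx = 0$ for every second eigenfunction; thus such eigenfunctions are admissible competitors and this degenerate situation is real (it occurs, e.g., for $f = \varphi_1$).

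For $p = 2$ the obstruction is removed by the Fredholm alternative. Since $f$ is $L^2$-orthogonal to the entire second eigenspace, \eqref{D} at $\lambda_2$ admits a particular solution $u_0$; taking any second eigenfunction $\varphi_2$, each $u_0 + \alpha \varphi_2$ is again a solution, and as $\varphi_2$ changes sign, $u_0 + \alpha \varphi_2$ is sign-changing for $|\alpha|$ large. This is exactly the mechanism used in Lemma \ref{lem:l<l<l}\ref{lem:l<l<l:2}, and it completes the case $p=2$.

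For $N = 1$ I would instead exploit the explicit one-dimensional spectral picture. On $\Omega = (0,L)$ the eigenvalues are simple, $\varphi_2$ has a single interior zero at the midpoint, and $\varphi_2^\pm$ are rescaled first eigenfunctions of the two half-intervals with eigenvalue $\lambda_2$; hence resonance at $\lambda_2$ on $(0,L)$ becomes resonance at the \emph{first} eigenvalue on each nodal subinterval. The plan is to construct a sign-changing solution of \eqref{D} at $\lambda_2$ by a shooting/matching argument with a free interior nodal point $c$: seek a solution positive on $(0,c)$ and negative on $(c,L)$, using that for $c$ off the midpoint one subinterval lies in the maximum-principle regime ($\lambda_2$ below its first eigenvalue) while the other is supercritical, and then choose $c$ so that the two pieces join in a $C^1$ (hence weak) manner. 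The main obstacle is precisely this existence-and-matching step when $p \ne 2$: lacking the Fredholm alternative, one must directly solve the resonant one-dimensional problem and control the interior flux balance as $c$ varies (via phase-plane/quadrature analysis, or by invoking Fredholm-type results at the first eigenvalue on each half-interval). This is also the structural reason why the case $p \ne 2$, $N \ge 2$, $\lambda_f^* = \lambda_2$ must be left open.
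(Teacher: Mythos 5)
Your reduction to the case $\lambda_f^* = \lambda_2$ via Corollary \ref{cor:l<l}, and your treatment of $p=2$ (orthogonality of $f$ to the whole second eigenspace via the contrapositive of Lemma \ref{lem:l<l<l} \ref{lem:l<l<l:4}, a particular solution from the Fredholm alternative, then adding $\alpha\varphi_2$ with $|\alpha|$ large to make it sign-changing) coincide with the paper's argument. The problem is the case $N=1$, $p\neq 2$, which is where the real work lies, and there your proposal contains a genuine gap rather than a proof: the ``existence-and-matching step'' that you yourself flag as the main obstacle \emph{is} the entire content of the lemma in this case. Your plan insists on producing a solution of \eqref{D} at the resonant value $\lambda=\lambda_2$ itself, but for $p\neq2$ no Fredholm alternative is available, so even the existence of \emph{some} solution at $\lambda_2$ is unclear; worse, your shooting scheme needs the piece living on the subinterval whose first eigenvalue lies below $\lambda_2$ to be negative, which is itself an anti-maximum-type assertion of exactly the kind the lemma is trying to establish, so the argument risks circularity. (Also, on the other subinterval the maximum principle gives positivity only where the restricted forcing is nontrivial; since $f$ is merely nonnegative, even that requires care.) Neither quadrature nor unspecified ``Fredholm-type results at the first eigenvalue on each half-interval'' closes this.

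The key observation in the paper, which your proposal misses, is that one never needs a solution at $\lambda_2$: since a sign-changing solution at any $\mu>\lambda_1$ forces $\lambda_f\leq\mu$, it suffices to produce sign-changing solutions at parameters $\mu_k\to\lambda_2$, and this can be done perturbatively. Concretely, the rescaling $u=\|v'\|^{-2}v$ transforms \eqref{D} into the problem \ref{D1d}, whose forcing term $\|u'\|^{2(p-1)}f$ is of higher order than the principal part near $u=0$; one then shows that $(\lambda_2,0)$ is a bifurcation point of \ref{D1d} using the jump of the Leray--Schauder degree of $I-R(\lambda|\cdot|^{p-2}\cdot)$ as $\lambda$ crosses $\lambda_2$ (del Pino--Man\'asevich). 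Solutions $u_k$ on the bifurcating branch satisfy $u_k/\|u_k'\|\to\varphi_2$ up to a subsequence, hence are sign-changing for large $k$, and undoing the rescaling yields sign-changing solutions of \eqref{D} at $\mu_k\to\lambda_2$, whence $\lambda_f\leq\lambda_2=\lambda_f^*$. To salvage your route you would have to prove solvability of the resonant one-dimensional problem at $\lambda_2$ together with the nodal matching --- a nontrivial result not available in the paper's references --- or else switch, as the paper does, to a bifurcation statement near $\lambda_2$ rather than an existence statement at $\lambda_2$.
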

\begin{proof}
	Assume first that $p=2$. If $\lambda_f^* < \lambda_2$, then the desired conclusion is given by Corollary \ref{cor:l<l}. Suppose that $\lambda_f^* = \lambda_2$. 
	Then Lemma \ref{lem:l<l<l} \ref{lem:l<l<l:4} implies that $\int_\Omega f \varphi_2 \, dx = 0$ for any second eigenfunction $\varphi_2$. 
	Therefore, {\renewcommand\nlab{\lambda_f^*}\eqref{D}} has a solution $w$ by the Fredholm alternative. 
	If $w$ is sign-changing, then $\lambda_f \leq \lambda_f^*$, and hence we are done. Let $w$ has a constant sign. Note that for any $\alpha \in \mathbb{R}$ and any $\varphi_2$, $w + \alpha \varphi_2$ is also a solution of {\renewcommand\nlab{\lambda_f^*}\eqref{D}}. Since any $\varphi_2$ is sign-changing, we can take $|\alpha|$ large enough in order to deduce that $w + \alpha \varphi_2$ also changes sign, and the proof for the case $p=2$ is finished.
	
	Assume now that $p>1$ and $N=1$. 
	Recall that if $\lambda_f^* < \lambda_2$, then the assertion is given by Corollary \ref{cor:l<l}. Suppose that $\lambda_f^* = \lambda_2$.
	By the definition of $\lambda_f$, the inequality $\lambda_f \leq \lambda_f^*$ will be established if we prove the existence of a sign-changing solution of \eqref{D} in an arbitrarily small neighbourhood of $\lambda_2$. 	
	Assume, without loss of generality, that $\Omega = (0,1)$. 
	We easily see that $v$ is a solution of \eqref{D} if and only if $u = \|v'\|^{-2} v$ is a solution of the problem
	\begin{equation}\label{D1d}
	\tag*{(\theequation)$_{\nlabb}$}
	\left\{
	\begin{aligned}
	-(|u'|^{p-2}u')' &= \lambda |u|^{p-2} u + \|u'\|^{2(p-1)} f(x) &&\text{in } (0,1),\\
	u(0)&=u(1)=0.
	\end{aligned}
	\right.
	\end{equation}
	To obtain the claim, we will sketchily show that $(\lambda_2, 0) \in \mathbb{R} \times W_0^{1,p}(0,1)$ is a bifurcation point for \ref{D1d}. 
	For a given $h \in W^{-1,p'}(0,1)$, let $R(h) \in W_0^{1,p}(0,1)$ be a unique solution of the problem
	\begin{equation*}\label{D1d2}
	\left\{
	\begin{aligned}
	-(|u'|^{p-2}u')' &= h &&\text{in } (0,1),\\
	u(0)=u(1)&=0.
	\end{aligned}
	\right.
	\end{equation*}
	Then $R$ seen as a map $L^{r}(0,1) \mapsto W_0^{1,p}(0,1)$ is completely continuous for any $r>1$,  \cite[p.~229]{delpinoman}.
	Note that $u$ is a solution of \ref{D1d} if and only if $u$ satisfies the operator equation 
	$$
	u=R\left(\lambda |u|^{p-2}u + F(u)\right),
	$$
	where the composition operator $F: W_0^{1,p}(0,1) \mapsto L^\infty(0,1)$ is defined by $F(u)(x) = \|u'\|^{2(p-1)} f(x)$.
	It is not hard to see that $H_\lambda(u) := R\left(\lambda |u|^{p-2}u + F(u)\right)$ defines a completely continuous map $W_0^{1,p}(0,1) \mapsto W_0^{1,p}(0,1)$. That is, $I-H_\lambda: W_0^{1,p}(0,1) \mapsto W_0^{1,p}(0,1)$ is a continuous compact perturbation of the identity $I$ in $W_0^{1,p}(0,1)$.
	
	Arguing as in \cite[Proposition 2.2]{delpinoman} (see also \cite[Theorem 4.1]{delman}), we see that for any $\delta>0$ the following index formula is satisfied:
	$$
	\text{deg}_{W_0^{1,p}(0,1)}\left(I - R(\lambda |\cdot|^{p-2} \cdot), B(0,\delta), 0\right) = 
	\left\{
	\begin{aligned}
	-&1 &\text{for } \lambda \in (\lambda_1, \lambda_2),\\
	&1 &\text{for } \lambda \in (\lambda_2,\lambda_3),
	\end{aligned}
	\right.
	$$
	where ``$\text{deg}$'' denotes the Leray-Schauder degree, $B(0,\delta)$ is a ball in $W_0^{1,p}(0,1)$ with radius $\delta$ centred at $0$, $\lambda_3$ is the third eigenvalue of the one-dimensional $p$-Laplacian, and $\lambda_2 < \lambda_3$ by, e.g., \cite[Section 3]{delman}.
	
	Using the above-mentioned facts, we can argue in much the same way as in \cite[Theorem 1.1]{delpinoman} to obtain that $(\lambda_2,0)$ is a bifurcation point for \ref{D1d}. 
	Moreover, if $\mu_k \to \lambda_2$ as $k \to +\infty$ and $u_k$ is a solution of 
	{\renewcommand\nlabb{\mu_k}\ref{D1d}}
	belonging to the bifurcation branch emanating from $(\lambda_2,0)$, then $u_k/\|u_k'\| \to \varphi_2$ in $W_0^{1,p}(0,1)$ up to a subsequence, see \cite[p.~231]{delpinoman}. Thus, since $\varphi_2$ is sign-changing, $u_k$ is also sign-changing for sufficiently large $k$. Therefore, we conclude that $v_k = \|u_k'\|^{-2} u_k$ is a sign-changing solution of {\renewcommand\nlab{\mu_k}\eqref{D}} for such $k$, and the proof is complete.	
\end{proof}

Now we are going to prove Proposition \ref{prop:q}. Assertion \ref{prop:q:1} of Proposition \ref{prop:q} directly follows from the combination of Lemma \ref{lem:neh} \ref{lem:neh:1} with the fact that $H_\lambda(v) > 0$ for any $v \in W_0^{1,p}(\Omega) \setminus \{0\}$ provided $\lambda < \lambda_1$.
Assertion \ref{prop:q:2} of Proposition \ref{prop:q} follows from the combination of Lemma \ref{lem:neh} \ref{lem:neh:1} with the first part of the following result.
\begin{lemma}\label{lem:H<0}
	Let $p>1$. If $\lambda \in (\lambda_1, \lambda^*_f)$, then any solution $u$ of \eqref{D} satisfies $H_\lambda(u) < 0$.
	If $\lambda^*_f < \lambda_2$, then any solution $u$ of {\renewcommand\nlab{\lambda_f^*}\eqref{D}} satisfies $H_{\lambda_f^*}(u) \leq 0$.
\end{lemma}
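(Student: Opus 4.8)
The plan is to compare a given solution $u$ of \eqref{D} with a carefully rescaled version of itself that is admissible for the variational problem \eqref{eq:l*} defining $\lambda_f^*$, so that Lemma \ref{lem:H>0} can be applied. The starting observation is that testing the weak form of \eqref{D} with the test functions $u^+:=\max\{u,0\}$ and $u^-:=\min\{u,0\}$ yields $H_\lambda(u^+)=\int_\Omega f u^+\,dx$ and $H_\lambda(u^-)=\int_\Omega f u^-\,dx$; since $u^\pm$ have disjoint supports one also has the additivity $H_\lambda(u)=H_\lambda(u^+)+H_\lambda(u^-)$ and, more generally, $H_\lambda(s\,u^++u^-)=s^p H_\lambda(u^+)+H_\lambda(u^-)$ for every $s>0$. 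Because $f\geq 0$, this already fixes the signs $a:=\int_\Omega f u^+\,dx\geq 0$ and $b:=\int_\Omega f u^-\,dx\leq 0$. Finally, in both assertions $\lambda>\lambda_1$ (using $\lambda_f^*>\lambda_1$ from Lemma \ref{lem:l<l<l} \ref{lem:l<l<l:1}), so Lemma \ref{lem:nonnegative} rules out nonnegative solutions and gives $u^-\not\equiv 0$.

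For the first assertion I would treat two cases. If $u\leq 0$, then $H_\lambda(u)=b\leq 0$, and $b=0$ is impossible because it would make the nonzero function $u$ admissible for \eqref{eq:l*}, forcing $H_\lambda(u)>0$ by Lemma \ref{lem:H>0}; hence $H_\lambda(u)<0$. If $u$ is sign-changing, the same application of Lemma \ref{lem:H>0} to the nonzero functions $u^\pm$ excludes $a=0$ and $b=0$, so $a>0$ and $b<0$. Then $w:=(-b/a)\,u^++u^-$ satisfies $\int_\Omega f w\,dx=0$ and $w\not\equiv 0$, whence $H_\lambda(w)>0$ by Lemma \ref{lem:H>0}. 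Using the scaling identity, $H_\lambda(w)=(-b/a)^p a+b$, and a one-line computation shows that $H_\lambda(w)>0$ is equivalent to $-b>a$, i.e.\ to $H_\lambda(u)=a+b<0$, which is the claim.

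The second assertion, at $\lambda=\lambda_f^*<\lambda_2$, runs along the same lines, but Lemma \ref{lem:H>0} now gives only the nonstrict bound $H_{\lambda_f^*}(w)\geq 0$, which yields $-b\geq a$ and hence $H_{\lambda_f^*}(u)=a+b\leq 0$. The step I expect to be the main obstacle is establishing, in the sign-changing case, the \emph{strict} inequality $b<0$ needed to form $w$: unlike before, the nonstrict Lemma \ref{lem:H>0} no longer rules out $b=0$. Here I would invoke Lemma \ref{prop:existence_zero}: if $b=0$, then $H_{\lambda_f^*}(u^-)=0$ means the Rayleigh quotient of the nonzero function $u^-$ equals $\lambda_f^*$ while $\int_\Omega f u^-\,dx=0$, so $u^-$ is a minimizer for $\lambda_f^*$; but every minimizer is sign-changing when $\lambda_f^*<\lambda_2$, contradicting $u^-\leq 0$. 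Thus $b<0$. If $a=0$ one reads off $H_{\lambda_f^*}(u)=b<0$ directly, and if $a>0$ the competitor argument above applies verbatim with the nonstrict inequality, completing the plan.
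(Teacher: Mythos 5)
Your proof is correct and takes essentially the same route as the paper: both arguments hinge on the zero-$f$-average competitor $\alpha u^+ + u^-$ with $\alpha = -\int_\Omega f u^-\,dx \big/ \int_\Omega f u^+\,dx$, the identities $H_\lambda(u^\pm)=\int_\Omega f u^\pm\,dx$, Lemma \ref{lem:H>0}, Lemma \ref{lem:nonnegative}, and---for the degenerate case $\int_\Omega f u^-\,dx=0$ at $\lambda=\lambda_f^*$---the same appeal to Lemma \ref{prop:existence_zero}. The only difference is organizational (the paper runs one global contradiction from $H_\lambda(u)\geq 0$, while you argue directly with a case split on the sign of $u$ and read off $H_\lambda(u)<0$ from the equivalence $H_\lambda(w)>0 \iff -b>a$), which is logically equivalent.
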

\begin{proof}
	Suppose first, by contradiction, that there exists $\lambda \in (\lambda_1, \lambda^*_f)$ and a solution $u$ of \eqref{D} such that $H_\lambda(u) \geq 0$.
	Actually, the equality here is impossible, since otherwise $u \in \mathcal{N}_\lambda$ gives
	$\int_\Omega f u \, dx = 0$, which contradicts Lemma \ref{lem:H>0}.
	Therefore, $H_\lambda(u) > 0$ and consequently $\int_\Omega f u \, dx > 0$ due to $u \in \mathcal{N}_\lambda$. 
	Moreover, the same assumptions occur if we argue by contradiction in the case $\lambda = \lambda^*_f < \lambda_2$. 
	
	The inequality $\int_\Omega f u \, dx > 0$ implies $\int_\Omega f u^+ \, dx > 0$, and hence $u$ cannot be nonpositive. Moreover, Lemma \ref{lem:nonnegative} implies that $u$ cannot be nonnegative, as well.
	Thus, $u$ is a sign-changing function. Assume first that $\int_\Omega f u^- \, dx < 0$. Then
	we can find $\alpha \in (0,1)$ such that
	\begin{equation}\label{eq:lem:H<0:1}
	\int_\Omega f (\alpha u^+ + u^-) \, dx = \alpha \int_\Omega f u^+ \, dx + \int_\Omega f u^- \, dx = 0.
	\end{equation}
	On the other hand, since $u$ is a solution of \eqref{D}, we get
	\begin{equation}\label{eq:lem:H<0:2}
	H_\lambda(u^\pm) = \int_\Omega f u^\pm \, dx.
	\end{equation}
	Combining \eqref{eq:lem:H<0:2} with \eqref{eq:lem:H<0:1}, we conclude that
	\begin{align*}
	H_\lambda(\alpha u^+ + u^-) 
	&= \alpha^p H_\lambda(u^+) + H_\lambda(u^-) = \\
	&= \alpha^p \int_\Omega f u^+ \, dx + \int_\Omega f u^- \, dx = 
	(\alpha^p - \alpha) \int_\Omega f u^+ \, dx < 0,
	\end{align*}
	since $\alpha \in (0,1)$. However, it contradicts Lemma \ref{lem:H>0} applied to the function $\alpha u^+ + u^-$.
	Assume now that $\int_\Omega f u^- \, dx = 0$. In this case we again get a contradiction with Lemma \ref{lem:H>0} in view of \eqref{eq:lem:H<0:2}, provided $\lambda \in (\lambda_1,\lambda_f^*)$. 
	If $\lambda=\lambda_f^* < \lambda_2$, then we see that $u^-$ is a minimizer for $\lambda_f^*$ and hence Lemma \ref{prop:existence_zero} gives a contradiction.
\end{proof}

\begin{remark}
	Note that for $\lambda \in (\lambda_1, \lambda_f)$ we have $u<0$ by \ref{AMP}, which yields $\int_\Omega f u \, dx < 0$, and hence the result of Lemma \ref{lem:H<0} simply follows from Lemma \ref{lem:neh} \ref{lem:neh:1}.
	That is, the result of Lemma \ref{lem:H<0} is nontrivial only for $\lambda \in [\lambda_f, \lambda_f^*]$. 
\end{remark}

Let us now prove Theorem \ref{thm:2}. 
First, we show that \eqref{D} has a ground state solution provided $\lambda$ is not an eigenvalue of \eqref{E}.
With a slight abuse of notation, we will write $\|\nabla u\| := \left(\int_\Omega |\nabla u|^p \,dx \right)^{1/p}$ for the norm of $u$ in $W_0^{1,p}(\Omega)$.

\begin{lemma}\label{lem:ground_state}
	Let $p>1$. If $\lambda$ is not an eigenvalue of \eqref{E}, then \eqref{D} possesses a ground state solution.
\end{lemma}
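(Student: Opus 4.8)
The plan is to realise the ground state as a minimiser of $E_\lambda$ over the \emph{entire} set $\mathcal{S}$ of weak solutions of \eqref{D}, after first showing that $\mathcal{S}$ is nonempty and compact in $W_0^{1,p}(\Omega)$. Direct minimisation of $E_\lambda$ is hopeless once $\lambda>\lambda_1$: the functional is then unbounded below (for instance $E_\lambda(t\varphi_1)=\tfrac{t^p}{p}(\lambda_1-\lambda)\|\varphi_1\|^p-t\int_\Omega f\varphi_1\,dx\to-\infty$), and, as one checks from the formula in Lemma \ref{lem:neh} \ref{lem:neh:3} by letting $H_\lambda(u)\to0^+$ while keeping $\int_\Omega fu\,dx$ bounded away from $0$, even $\inf_{\mathcal{N}_\lambda}E_\lambda=-\infty$. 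So the compactness of the solution set must be exploited instead. I would recast the problem in fixed-point form: writing $S:=(-\Delta_p)^{-1}$ for the inverse $p$-Laplacian, which is completely continuous as a map into $W_0^{1,p}(\Omega)$ (the higher-dimensional analogue of the operator $R$ used in the proof of Lemma \ref{lem:N=1}), a function $u$ solves \eqref{D} if and only if $u=T(u):=S(\lambda|u|^{p-2}u+f)$, and $T$ is completely continuous, so that $\mathcal{S}=\mathrm{Fix}(T)$.

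Next I would establish an a priori bound, namely $\mathcal{S}\subset B(0,R)$ for some $R>0$. Arguing by contradiction, suppose $u_n\in\mathcal{S}$ with $\|\nabla u_n\|\to\infty$ and set $w_n:=u_n/\|\nabla u_n\|$. Dividing the equation by $\|\nabla u_n\|^{p-1}$ gives $w_n=S\!\left(\lambda|w_n|^{p-2}w_n+\|\nabla u_n\|^{-(p-1)}f\right)$. Since $(w_n)$ is bounded, the complete continuity of $S$ upgrades weak convergence to strong convergence $w_n\to w$ with $\|\nabla w\|=1$ and $w=S(\lambda|w|^{p-2}w)$; thus $w$ is an eigenfunction of \eqref{E} with eigenvalue $\lambda$, contradicting the hypothesis that $\lambda$ is not an eigenvalue. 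The same rescaling shows that a weakly convergent sequence in $\mathcal{S}$ converges strongly to an element of $\mathcal{S}$, so $\mathcal{S}$ is closed; being closed and contained in the relatively compact set $T(B(0,R))$, with $\mathcal{S}=T(\mathcal{S})$, it is in fact compact.

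It remains to guarantee $\mathcal{S}\neq\emptyset$. Because the a priori bound rules out fixed points of $T$ on $\partial B(0,R)$, the Leray--Schauder degree $\mathrm{deg}(I-T,B(0,R),0)$ is well defined, and the homotopy $T_s(u):=S(\lambda|u|^{p-2}u+sf)$, $s\in[0,1]$, for which the identical rescaling argument excludes zeros on $\partial B(0,R)$ uniformly in $s$ (the $f$-term drops out after rescaling while $\lambda$ stays fixed), identifies this degree with the index of the single fixed point $0$ of $u\mapsto S(\lambda|u|^{p-2}u)$. This index is nonzero precisely because $\lambda$ is not an eigenvalue: it equals $+1$ for $\lambda<\lambda_1$ and $-1$ for $\lambda\in(\lambda_1,\lambda_2)$, the latter being the higher-dimensional counterpart of the degree computation carried out for $N=1$ in Lemma \ref{lem:N=1}. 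Hence $\mathrm{deg}(I-T,B(0,R),0)\neq0$ and $\mathcal{S}\neq\emptyset$.

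Finally, $E_\lambda$ is continuous on $W_0^{1,p}(\Omega)$, so it attains its minimum on the nonempty compact set $\mathcal{S}$, and by definition any such minimiser is a ground state solution of \eqref{D}. I expect the nonemptiness step to be the main obstacle: the nonvanishing of the index of the trivial solution of \eqref{E} for general $p$ and $N$ is exactly the point at which the incomplete description of the spectrum of $-\Delta_p$ intervenes. For the range of $\lambda$ actually used in this paper, $\lambda\in[\lambda_f^*,\lambda_2)\subset(\lambda_1,\lambda_2)$ together with the region $\lambda<\lambda_1$, this index is known to be nonzero, so existence---and with it the ground state---follows.
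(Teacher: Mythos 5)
Your reduction of \eqref{D} to the fixed-point equation $u=T(u)$, the a priori bound by rescaling, and the conclusion that the solution set $\mathcal{S}$ is compact (so that the continuous functional $E_\lambda$ attains its minimum there) are all sound; the rescaling step is in fact the same blow-up argument the paper uses. The genuine gap is in the non-emptiness step, and you have flagged it yourself: the lemma asserts existence for \emph{every} $\lambda$ that is not an eigenvalue of \eqref{E}, including values above $\lambda_2$, whereas your degree argument covers only $\lambda<\lambda_1$ and $\lambda\in(\lambda_1,\lambda_2)$. Moreover, even on $(\lambda_1,\lambda_2)$ the value $-1$ you assign to $\mathrm{deg}\left(I-S(\lambda|\cdot|^{p-2}\cdot),B(0,\delta),0\right)$ is, for $N\geq 2$ and $p\neq 2$, a nontrivial statement that you neither prove nor cite; the del Pino--Man\'asevich computation invoked in Lemma \ref{lem:N=1} is genuinely one-dimensional. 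So, as written, the proposal does not prove the lemma as stated.

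The missing idea is that no index computation is needed at all: once your admissible homotopy $T_s(u)=S\left(\lambda|u|^{p-2}u+sf\right)$ has removed $f$, the map $I-T_0$ is \emph{odd}, since both $-\Delta_p$ and $u\mapsto\lambda|u|^{p-2}u$ are odd. The a priori bound, which your rescaling argument gives for every non-eigenvalue $\lambda$ (not just those below $\lambda_2$), excludes zeros of $I-T_0$ on $\partial B(0,R)$, so Borsuk's theorem yields that $\mathrm{deg}(I-T_0,B(0,R),0)$ is an odd integer, hence nonzero, for \emph{every} $\lambda$ that is not an eigenvalue --- no information about the spectrum beyond $\lambda_2$ and no precise index values are required. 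This oddness mechanism is exactly what underlies the results of Pokhozhaev and of Fu\v{c}\'ik--Ne\v{c}as--Sou\v{c}ek--Sou\v{c}ek that the paper cites to dispose of existence in one line. With that repair your argument is complete and parallels the paper's: the paper obtains the ground state from a minimizing sequence of solutions, using the same rescaling for boundedness and the Palais--Smale condition to pass to the limit, which is equivalent in substance to your compactness-of-$\mathcal{S}$ argument.
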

\begin{proof}
	By the results of \cite[Theorem 3.1, p.\ 60]{FNSS} or \cite[Theorem 3]{pokhozhaev1967}, \eqref{D} possesses a solution for any $\lambda$ which is not an eigenvalue of \eqref{E}.
	If this solution is unique (as it is for $p=2$) or if there are finitely many solutions, then we are done. 
	Let us assume that for some admissible $\lambda$ there is an infinite sequence of solutions $\{v_n\} \subset W_0^{1,p}(\Omega)$ of \eqref{D} such that
	$$
	E_\lambda(v_n) 
	\to 
	\inf\left\{E_\lambda(w):~ w \text{ is a solution of } \eqref{D}\right\}.
	$$
	If $\{v_n\}$ is bounded in $W_0^{1,p}(\Omega)$, then, up to a subsequence, $\{v_n\}$ converges strongly in $W_0^{1,p}(\Omega)$ to a solution of \eqref{D} since $E_\lambda$ satisfies the Palais--Smale condition provided $\lambda$ is not an eigenvalue of \eqref{E}.
	On the other hand, if we suppose that, up to a subsequence, $\|\nabla v_n\| \to +\infty$ as $n \to +\infty$, then the normalized sequence consisted of $\tilde{v}_n:=\frac{v_n}{\|\nabla v_n\|}$ converges (again up to a subsequence) weakly in $W_0^{1,p}(\Omega)$ and strongly in $L^p(\Omega)$ to some $\tilde{v} \in W_0^{1,p}(\Omega)$.
	Moreover, each $\tilde{v}_n$ satisfies the equation
	$$
	-\Delta_p \tilde{v}_n = \lambda |\tilde{v}_n|^{p-2}\tilde{v}_n + \frac{f(x)}{\|\nabla v_n\|^{p-1}}
	\quad
	\text{in }
	\Omega,
	$$
	in the weak sense.
	By the Nehari constraint, $v_n \in \mathcal{N}_\lambda$ for any $n \in \mathbb{N}$, and
	$$
	H_\lambda(\tilde{v}_n) = 1 - \lambda \|\tilde{v}_n\|^p = \frac{\int_\Omega f \tilde{v}_n \, dx}{\|\nabla v_n\|^{p-1}} \to 0 
	\quad
	\text{as }
	n \to +\infty.
	$$
	Consequently, $\|\tilde{v}_n\|$ does not converge to $0$, which implies that $\tilde{v} \not\equiv 0$. 
	Thus, $\{\tilde{v}_n\}$ converges weakly to an eigenfunction of $-\Delta_p$, and hence $\lambda$ is the associated eigenvalue, which is impossible. 
\end{proof}

Assertion \ref{thm:2:2} of Theorem \ref{thm:2} is based on Lemmas \ref{prop:existence_zero} and \ref{lem:H<0} and follows from the following result.
\begin{lemma}\label{lem:ground_state_0}
	Let $p>1$ and $\lambda_f^* < \lambda_2$. Then any ground state solution $u$ of {\renewcommand\nlab{\lambda_f^*}\eqref{D}} is sign-changing and satisfies $E_{\lambda_f^*}(u) = 0$.	
\end{lemma}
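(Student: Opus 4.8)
The plan is to pin down the ground state energy at $\lambda=\lambda_f^*$ by squeezing it between $0$ from above and $0$ from below, and then to recover the sign-changing property from the Nehari identity. Write $c := \inf\{E_{\lambda_f^*}(v):~ v \text{ is a solution of } (\mathcal{D}_{\lambda_f^*})\}$ for the ground state energy; a ground state solution exists because $\lambda_f^* \in (\lambda_1,\lambda_2)$ (Lemma \ref{lem:l<l<l} \ref{lem:l<l<l:1} together with the assumption $\lambda_f^* < \lambda_2$) is not an eigenvalue of \eqref{E}, so Lemma \ref{lem:ground_state} applies.

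For the upper bound, I would invoke Lemma \ref{prop:existence_zero}: since $\lambda_f^* < \lambda_2$, any minimizer $u_0$ for $\lambda_f^*$ is a solution of $(\mathcal{D}_{\lambda_f^*})$ with $E_{\lambda_f^*}(u_0)=0$, so $c \leq 0$. For the lower bound, I would note that every solution $u$ lies on the Nehari manifold $\mathcal{N}_{\lambda_f^*}$, so Lemma \ref{lem:neh} \ref{lem:neh:1} yields $E_{\lambda_f^*}(u) = \left(\tfrac1p - 1\right) H_{\lambda_f^*}(u)$. Since $\lambda_f^* < \lambda_2$, the second part of Lemma \ref{lem:H<0} gives $H_{\lambda_f^*}(u) \leq 0$; as $\tfrac1p - 1 < 0$, the product is nonnegative, whence $E_{\lambda_f^*}(u) \geq 0$ for every solution and therefore $c \geq 0$.

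Combining the two bounds gives $c=0$, so any ground state solution $u$ satisfies $E_{\lambda_f^*}(u)=0$. The Nehari identity in Lemma \ref{lem:neh} \ref{lem:neh:1} then forces $H_{\lambda_f^*}(u)=0$ and, via the membership $u \in \mathcal{N}_{\lambda_f^*}$, also $\int_\Omega f u \, dx = 0$; Lemma \ref{lem:sign-changing} finally shows that $u$ is sign-changing. There is no genuinely hard analytic step remaining in this argument: all of the real work has been discharged into Lemma \ref{lem:H<0} (the sign of $H_{\lambda_f^*}$ on solutions) and Lemma \ref{prop:existence_zero} (the existence of a zero-energy minimizer). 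The only point requiring care is the sign bookkeeping of the factor $\tfrac1p-1 < 0$ against $H_{\lambda_f^*}(u) \leq 0$, which is precisely what upgrades the weak bound $c \leq 0$ to the exact value $c=0$.
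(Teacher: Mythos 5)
Your proof is correct and follows essentially the same route as the paper's: existence via Lemma \ref{lem:ground_state}, the lower bound $E_{\lambda_f^*}(u)\geq 0$ from Lemma \ref{lem:H<0} combined with Lemma \ref{lem:neh} \ref{lem:neh:1}, the upper bound from the zero-energy minimizer of Lemma \ref{prop:existence_zero}, and the sign-changing property via Lemma \ref{lem:sign-changing}. The only cosmetic difference is that you make the squeeze on the ground state energy $c$ explicit, which the paper leaves implicit.
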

\begin{proof}
	Since $\lambda_1 < \lambda_f^* < \lambda_2$, there exists a ground state solution $v$ of {\renewcommand\nlab{\lambda_f^*}\eqref{D}} by Lemma \ref{lem:ground_state}. The second part of Lemma \ref{lem:H<0} in combination with Lemma \ref{lem:neh} \ref{lem:neh:1} implies that $E_{\lambda_f^*}(v) \geq 0$. 
	Moreover, we know from Lemma \ref{prop:existence_zero} that {\renewcommand\nlab{\lambda_f^*}\eqref{D}} possesses a sign-changing solution $u$ with $E_{\lambda_f^*}(u)=0$. 
	That is, $u$ is a ground state solution. 
	Therefore, any other ground state solution $v$ of {\renewcommand\nlab{\lambda_f^*}\eqref{D}} also satisfies $E_{\lambda_f^*}(v)=0$, and hence Lemma \ref{lem:sign-changing} implies that $v$ is sign-changing.
\end{proof}

Let us complete the proof of Theorem \ref{thm:2} by obtaining assertion \ref{thm:2:3}. 
This assertion will follow if we show that for $\lambda \in (\lambda_f^*,\lambda_2)$ there exists a solution $v$ of \eqref{D} such that $E_\lambda(v) < 0$. In this case, Lemma \ref{lem:ground_state} implies that any ground state solution $u$ of \eqref{D} also satisfies $E_\lambda(u) < 0$. 
Consequently, $\int_\Omega f u \,d x > 0$ by Lemma \ref{lem:neh} \ref{lem:neh:1}, and $u$ cannot be nonnegative by Lemma \ref{lem:nonnegative}. That is, $u$ is sign-changing.

In fact, we will prove a more general result. Consider the following subset of the Nehari manifold which contains all sign-changing solutions of \eqref{D}:
$$
\mathcal{M}_\lambda
:= 
\left\{
w \in W_0^{1,p}(\Omega):~ w^\pm \not\equiv 0,~
H_\lambda(w^\pm) = \int_\Omega f w^\pm \, dx
\right\}
\subset
\mathcal{N}_\lambda.
$$
Consider also the corresponding minimization problem:
$$
\beta := \inf\{E_\lambda(w):~ w \in \mathcal{M}_\lambda\}.
$$
\begin{prop}\label{prop:existence_of_sign-changing_solution}
	Let $p>1$, $f>0$ a.e.\ in $\Omega$, and $\lambda_f^* < \lambda_2$. If $\lambda \in (\lambda_f^*, \lambda_2)$, then $\mathcal{M}_\lambda \neq \emptyset$, $\beta \in (-\infty,0)$, and $\beta$ is attained. 
	Moreover, $u$ is a minimizer for $\beta$ if and only if $u$
	is a ground state solution of \eqref{D}. 
	In particular, $u$ is sign-changing and $E_\lambda(u) < 0$.
\end{prop}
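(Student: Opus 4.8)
The plan is to run the direct method on $E_\lambda$ constrained to $\mathcal{M}_\lambda$ and then to recognize the minimizer as a genuine solution. Throughout I use that for $w\in\mathcal{M}_\lambda$ the parts $w^+,w^-$ have disjoint supports, so $E_\lambda(w)=E_\lambda(w^+)+E_\lambda(w^-)$ and each part lies on $\mathcal{N}_\lambda$; since $f>0$, $\int_\Omega f w^+\,dx>0$ and $\int_\Omega f w^-\,dx<0$, so by \eqref{eq:EonNeh} one has $H_\lambda(w^+)>0$, $H_\lambda(w^-)<0$ and $E_\lambda(w^+)<0<E_\lambda(w^-)$. For nonemptiness and $\beta<0$ I would use the solution at $\lambda_f^*$: since $\lambda_f^*<\lambda_2$, Lemma \ref{prop:existence_zero} gives a sign-changing solution $u_0$ of {\renewcommand\nlab{\lambda_f^*}\eqref{D}} with $\int_\Omega f u_0\,dx=0$; testing its equation with $u_0^\pm$ yields $H_{\lambda_f^*}(u_0^\pm)=\int_\Omega f u_0^\pm\,dx$, so $H_{\lambda_f^*}(u_0^-)<0$ while, applying Lemma \ref{lem:second_eigenvalue} at $\mu=\int_\Omega|\nabla u_0^+|^p\,dx/\int_\Omega|u_0^+|^p\,dx$, the Rayleigh quotient of $u_0^+$ is $\geq\lambda_2$. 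Hence $H_\lambda(u_0^+)>0$ and $H_\lambda(u_0^-)<0$ for every $\lambda\in(\lambda_f^*,\lambda_2)$, and Lemma \ref{lem:neh} \ref{lem:neh:3} lets me project $u_0^+,u_0^-$ separately onto $\mathcal{N}_\lambda$; the resulting $w_\lambda:=t_+u_0^++t_-u_0^-$ lies in $\mathcal{M}_\lambda$. Using \eqref{eq:tu} I check that $\lambda\mapsto E_\lambda(t_+u_0^+)$ and $\lambda\mapsto E_\lambda(t_-u_0^-)$ are both strictly decreasing (because $H_\lambda(u_0^+)$ decreases and $|H_\lambda(u_0^-)|$ increases in $\lambda$), so $E_\lambda(w_\lambda)<E_{\lambda_f^*}(u_0)=0$ and therefore $\beta<0$.

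For the lower bound, fix $w\in\mathcal{M}_\lambda$. Since $H_\lambda(w^-)<0$, Lemma \ref{lem:second_eigenvalue} applied to $w$ forces $\int_\Omega|\nabla w^+|^p\,dx\geq\lambda_2\int_\Omega|w^+|^p\,dx$; combined with the Nehari identity and $\int_\Omega f w^+\,dx\leq\|f\|_\infty|\Omega|^{1/p'}\|w^+\|$ this gives a uniform bound $\|w^+\|\leq B$, hence $H_\lambda(w^+)=\int_\Omega f w^+\,dx\leq CB$, and since $H_\lambda(w^-)<0$ we get $E_\lambda(w)=(\tfrac1p-1)H_\lambda(w)\geq(\tfrac1p-1)CB>-\infty$. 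The analogous control of $w^-$ is where $f>0$ enters decisively: on the weakly compact set $\{\psi\in W_0^{1,p}(\Omega):\|\psi\|=1,\ \int_\Omega|\nabla\psi|^p\,dx\leq\lambda\}$ the weakly continuous functional $\int_\Omega f|\psi|\,dx$ attains a positive minimum $m>0$; writing $w^-=\|w^-\|\,v$ with $\|v\|=1$, the Nehari identity reads $m\,\|w^-\|\leq\int_\Omega f|w^-|\,dx=|H_\lambda(w^-)|$ and $\|w^-\|^{p-1}=\int_\Omega f|v|\,dx/(\lambda-\int_\Omega|\nabla v|^p\,dx)\geq m/(\lambda-\lambda_1)$. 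The first inequality bounds $\|w^-\|$ in terms of $E_\lambda(w^-)$, and the second gives the uniform lower bound $\|w^-\|\geq\delta>0$ on all of $\mathcal{M}_\lambda$.

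For attainment, take a minimizing sequence $\{w_n\}\subset\mathcal{M}_\lambda$. The bounds above make $\|w_n^+\|$ bounded and, since $E_\lambda(w_n^-)$ stays bounded (as $E_\lambda(w_n)\to\beta$ and $E_\lambda(w_n^+)$ is bounded), also $\|w_n^-\|$ bounded; thus $\{w_n\}$ is bounded in $W_0^{1,p}(\Omega)$, and up to a subsequence $w_n^\pm\rightharpoonup w_*^\pm$ weakly and $w_n^\pm\to w_*^\pm$ in $L^p(\Omega)$. The uniform bound $\|w_n^-\|\geq\delta$ forces $\|w_*^-\|\geq\delta$, so $w_*^-\not\equiv0$; and if $w_*^+\equiv0$ then $H_\lambda(w_n^+)=\int_\Omega f w_n^+\,dx\to0$, whence $E_\lambda(w_n^+)\to0$ and $\beta=\lim E_\lambda(w_n^-)>0$, contradicting $\beta<0$, so $w_*^+\not\equiv0$. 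Lower semicontinuity of $\|\nabla\cdot\|^p$ with strong $L^p$-convergence gives $H_\lambda(w_*^+)\leq\lim\int_\Omega f w_n^+\,dx$ and $H_\lambda(w_*^-)\leq\lim H_\lambda(w_n^-)<0$; Lemma \ref{lem:second_eigenvalue} then yields $H_\lambda(w_*^+)>0$. Projecting $w_*^\pm$ onto $\mathcal{N}_\lambda$ via Lemma \ref{lem:neh} \ref{lem:neh:3} and using that the projected energy is monotone in $H_\lambda$, I obtain $\hat w:=t_+w_*^++t_-w_*^-\in\mathcal{M}_\lambda$ with $E_\lambda(\hat w)\leq\lim E_\lambda(w_n)=\beta$, hence $E_\lambda(\hat w)=\beta$.

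Finally, to identify minimizers with ground states, I show $\hat w$ solves \eqref{D}. The map $(t_+,t_-)\mapsto E_\lambda(t_+\hat w^++t_-\hat w^-)=E_\lambda(t_+\hat w^+)+E_\lambda(t_-\hat w^-)$ is separable, with $t_+=1$ a strict minimum and $t_-=1$ a strict maximum of the two fibers (Lemma \ref{lem:neh} \ref{lem:neh:3}); writing the Lagrange relation for the constraints $H_\lambda(w^\pm)-\int_\Omega f w^\pm\,dx=0$ and testing it against $\hat w^+$ and $\hat w^-$ forces both multipliers to vanish, because the derivative of each constraint evaluated at $\hat w^\pm$ equals $(p-1)\int_\Omega f\hat w^\pm\,dx\neq0$ while the cross terms vanish by disjointness of supports; thus $E_\lambda'(\hat w)=0$. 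A ground state solution exists by Lemma \ref{lem:ground_state} since $\lambda\in(\lambda_1,\lambda_2)$ is not an eigenvalue; any ground state has energy $\leq E_\lambda(\hat w)=\beta<0$, so by Lemma \ref{lem:nonnegative} it is sign-changing, hence lies in $\mathcal{M}_\lambda$ and has energy $\geq\beta$, which shows that ground states are exactly the minimizers of $\beta$, and in particular every minimizer is sign-changing with $E_\lambda<0$. I expect the main obstacle to be the compactness in the attainment step, namely ruling out vanishing of either $w_*^+$ or $w_*^-$, where the hypothesis $f>0$ a.e.\ is essential through the uniform lower bound $\|w^-\|\geq\delta$; the natural-constraint argument additionally relies on the $C^1$-differentiability of $w\mapsto H_\lambda(w^\pm)$, which I would treat as standard.
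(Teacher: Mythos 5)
Your Steps 1--3 and the final ground-state identification are essentially sound, but the critical-point step --- showing that a minimizer of $\beta$ over $\mathcal{M}_\lambda$ actually solves \eqref{D} --- contains a genuine gap, and it is precisely the step the paper considers the technical heart of the proposition. You apply the Lagrange multiplier rule to the two constraints $w \mapsto H_\lambda(w^\pm) - \int_\Omega f w^\pm\,dx$, and at the end you concede that this ``relies on the $C^1$-differentiability of $w\mapsto H_\lambda(w^\pm)$, which I would treat as standard.'' This is not standard; it is false in general. The maps $w \mapsto w^\pm$ are not $C^1$ from $W_0^{1,p}(\Omega)$ to itself, and the functionals $w \mapsto \int_\Omega |\nabla w^\pm|^p\,dx$ are not continuously differentiable: at a sign-changing $w$ with a regular nodal set, the directional variation of $\int_\Omega |\nabla w^+|^p\,dx$ in the direction $\phi$ picks up a surface contribution of the form $\int_{\{w=0\}} \phi\, |\nabla w|^{p-1}\, d\mathcal{H}^{N-1}$ (coming, via the coarea formula, from the part of the domain that changes sign), and this object is highly unstable in $w$ and need not even exist for general $w \in W_0^{1,p}(\Omega)$. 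Consequently $\mathcal{M}_\lambda$ is not a $C^1$ manifold and no version of the Lagrange multiplier theorem applies to it. This is exactly why the paper states that ``the constraints for $v^\pm$ given by $\mathcal{M}_\lambda$ are not necessarily differentiable'' and replaces the multiplier argument by the quantitative deformation lemma \cite[Lemma 2.3]{willem} applied over the whole space $W_0^{1,p}(\Omega)$ (following \cite{BWW}): one assumes $E_\lambda'(v)\neq 0$, deforms the curve $g(s)=v^++sv^-$, and uses an intermediate-value argument in $s$ together with the fibering properties of Lemma \ref{lem:neh} \ref{lem:neh:3} to manufacture an element of $\mathcal{M}_\lambda$ with energy strictly below $\beta$, a contradiction. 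Without this (or an equivalent nonsmooth argument), your proof does not establish that minimizers are solutions, and hence neither direction of the equivalence ``minimizer $\Leftrightarrow$ ground state'' is proved.

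For what it is worth, the parts of your argument that do work take a partly different and in places cleaner route than the paper. Your lower bound $\beta>-\infty$ is direct and quantitative: the positive minimum $m$ of $\psi\mapsto\int_\Omega f|\psi|\,dx$ over the weakly compact set $\{\|\psi\|_{L^p}=1,\ \int_\Omega|\nabla\psi|^p\,dx\le\lambda\}$ is a nice way to exploit $f>0$ a.e., where the paper instead argues by contradiction along a minimizing sequence (Step 2). Likewise, in the attainment step you do not prove strong convergence of the minimizing sequence at all: you project the parts of the weak limit back onto $\mathcal{N}_\lambda$ and use monotonicity of the fibered energy \eqref{eq:tu} in $H_\lambda$ to conclude $E_\lambda(\hat w)\le\beta$, whereas the paper derives strong convergence by a strict-inequality contradiction (Step 3); both are valid, and yours is arguably more economical. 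But these improvements do not compensate for the missing critical-point argument, which must be repaired along the lines of the paper's Step 4.
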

\begin{proof}
	\textit{Step 1.} Let us fix $\lambda \in (\lambda_f^*,\lambda_2)$. 
	We start by showing that $\mathcal{M}_\lambda \neq \emptyset$ and $\beta<0$.
	Let $u$ be a minimizer for $\lambda_f^*$ given by Lemma \ref{lem:l<l<l} \ref{lem:l<l<l:0}. 
	In view of the assumption $\lambda_f^* < \lambda_2$, Lemma \ref{prop:existence_zero} implies that $u$ is a sign-changing solution of {\renewcommand\nlab{\lambda_f^*}\eqref{D}} with $E_{\lambda_f^*}(u)=0$. 
	Then Lemma \ref{lem:neh} \ref{lem:neh:1} yields $H_{\lambda_f^*}(u)=0$, and 
	hence we conclude that
	$$
	H_{\lambda_f^*}(u^+) = \int_\Omega f u^+ \, dx > 0,
	\quad 
	H_{\lambda_f^*}(u^-) = \int_\Omega f u^- \, dx < 0,
	$$
	thanks to the assumption $f>0$.
	The continuity of $H_{\lambda}$ with respect to $\lambda$ implies the existence of $\tilde{\lambda} \in (\lambda_f^*,+\infty)$ such that $H_{\lambda}(u^+) > 0$ for all $\lambda \in (\lambda_f^*,\tilde{\lambda})$, and $H_{\tilde{\lambda}}(u^+) = 0$. 
	Note that $\tilde{\lambda} \geq \lambda_2$. Indeed, if we suppose that $\tilde{\lambda} < \lambda_2$, then $H_{\tilde{\lambda}}(u^+) = 0$ and $H_{\tilde{\lambda}}(u^-) < 0$ lead to a contradiction thanks to Lemma \ref{lem:second_eigenvalue}.
	Consequently, recalling that $\lambda \in (\lambda_f^*, \lambda_2)$, we have
	\begin{equation}\label{eq:HHHH}
	H_{\lambda_f^*}(u^+) > H_{\lambda}(u^+) > 0,
	\quad 
	H_{\lambda}(u^-) < H_{\lambda_f^*}(u^-) < 0.
	\end{equation}
	Thus, Lemma \ref{lem:neh} \ref{lem:neh:3} yields the existence of $t_{+} > 0$ and $t_{-} > 0$ such that $t_{+} u^+, t_{-} u^- \in \mathcal{N}_\lambda$, and hence $t_{+} u^+ + t_{-} u^- \in \mathcal{M}_\lambda$. 
	Therefore, $\mathcal{M}_\lambda \neq \emptyset$. 
	Moreover, by \eqref{eq:tu} and \eqref{eq:HHHH},
	\begin{equation}\label{eq:E<0}
	E_\lambda(t_{+} u^+ + t_{-} u^-) =  
	\left(
	\frac{1}{p}-1
	\right) 
	\left(
	\frac{\left|\int_\Omega f u^+ \, dx\right|^\frac{p}{p-1}}{\left|H_\lambda(u^+)\right|^\frac{1}{p-1}} 
	-
	\frac{\left|\int_\Omega f u^- \, dx\right|^\frac{p}{p-1}}{\left|H_\lambda(u^-)\right|^\frac{1}{p-1}}
	\right)
	< E_{\lambda_f^*}(u) = 0,
	\end{equation}
	which shows that $\beta < 0$.
	
	\textit{Step 2.} Let us prove that $\beta > -\infty$. 
	The fact $\mathcal{M}_\lambda \neq \emptyset$ implies the existence of a minimizing sequence $\{v_n\} \subset \mathcal{M}_\lambda$ for $\beta$, and in view of \eqref{eq:E<0} we can assume that $E_\lambda(v_n) < 0$ for each $n \in \mathbb{N}$. 
	It is enough to show that $\{v_n\}$ is bounded in $W_0^{1,p}(\Omega)$. 
	
	Below, we will denote by $\{w_n^\pm\}$ the sequence of normalized functions $w_n^\pm:=\frac{v_n^\pm}{\|\nabla v_n^\pm\|}$, $n \in \mathbb{N}$. 
	Since $\|\nabla w_n^\pm\|=1$ for each $n \in \mathbb{N}$, $\{w_n^\pm\}$ converges (up to a subsequence) weakly in $W_0^{1,p}(\Omega)$, strongly in $L^p(\Omega)$, and almost everywhere in $\Omega$ to some $w^\pm \in W_0^{1,p}(\Omega)$. In particular, $w^+ \geq 0$ and $w^- \leq 0$. Moreover, $w^- \not\equiv 0$ since $H_\lambda(w_n^-) < 0$ for all $n \in \mathbb{N}$, and 
	\begin{equation}\label{eq:Hw-<0}
	H_\lambda(w^-) \leq \liminf_{n \to +\infty} H_\lambda(w_n^-) \leq 0.
	\end{equation}	
	
	First, we show that $\{v_n^+\}$ is bounded in $W_0^{1,p}(\Omega)$. 
	Suppose, by contradiction, that $\|\nabla v_n^+\| \to +\infty$ as $n \to +\infty$, up to a subsequence.
	Then, we have
	$$
	0 < H_\lambda(w_n^+) = \frac{1}{\|\nabla v_n^+\|^{p-1}}\int_\Omega f w_n^+ \, dx \leq \frac{C}{\|\nabla v_n^+\|^{p-1}} \to 0,
	$$	
	which implies that $\|w_n^+\| > c_1$ for some $c_1>0$ and all $n \in \mathbb{N}$. That is, $w^+ \not\equiv 0$, and 
	$$
	H_\lambda(w^+) \leq \liminf_{n \to +\infty} H_\lambda(w_n^+) = 0.
	$$
	Thus, recalling \eqref{eq:Hw-<0}, Lemma \ref{lem:second_eigenvalue} applied to $w$ gives a contradiction with $\lambda < \lambda_2$, which shows that $\{v_n^+\}$ is bounded in $W_0^{1,p}(\Omega)$.
	
	Second, we show that $\{v_n^-\}$ is bounded in $W_0^{1,p}(\Omega)$. 
	By Lemma \ref{lem:neh} \ref{lem:neh:1}, we have
	\begin{equation}\label{eq:0<E}
	0 < E_\lambda(v_n^-) = \left(\frac{1}{p}-1\right) \int_\Omega f v_n^- \, dx = 
	\left(\frac{1}{p}-1\right) \|\nabla v_n^-\| \int_\Omega f w_n^- \, dx.
	\end{equation}
	Suppose, by contradiction, that $\|\nabla v_n^-\| \to +\infty$ as $n \to +\infty$, up to a subsequence.
	Note that $E_\lambda(v_n^-) < c_2 < +\infty$ for some $c_2>0$ and all $n \in \mathbb{N}$, since each $E_\lambda(v_n) < 0$, and $\{v_n^+\}$ is bounded. 
	Therefore, we deduce from \eqref{eq:0<E} and Lebesgue's dominated convergence theorem that $\int_\Omega f w^- \, dx = 0$. However, this is impossible since $w^- \leq 0$, $w^- \not\equiv 0$, and $f>0$. 	
	Thus, $\{v_n^-\}$ is bounded in $W_0^{1,p}(\Omega)$. 
	
	Therefore, we conclude that $\{v_n\}$ is bounded in $W_0^{1,p}(\Omega)$, which yields $\beta > -\infty$. 
	
	\textit{Step 3.} Let us now prove that $\beta$ is attained. 
	We may assume that the minimizing sequence $\{v_n\}$ converges (up to a subsequence) weakly in $W_0^{1,p}(\Omega)$ and strongly in $L^p(\Omega)$ to some $v = v^+ + v^- \in W_0^{1,p}(\Omega)$, since $\{v_n\}$ is bounded in $W_0^{1,p}(\Omega)$. Our claim will follow if we show that $\{v_n\}$ converges (up to a subsequence) strongly in $W_0^{1,p}(\Omega)$ and $v \in \mathcal{M}_\lambda$. 
	
	First, we show that $v^+ \not\equiv 0$. 
	Suppose that $\|\nabla v_n^+\| \to 0$.
	Since $E_\lambda(v_n) < 0$, $E_\lambda(v_n^+) < 0$, and $E_\lambda(v_n^-) > 0$ for each $n \in \mathbb{N}$, we have
	$$
	E_\lambda(v_n) = E_\lambda(v_n^+) + E_\lambda(v_n^-) > E_\lambda(v_n^+) \to 0,
	$$	
	which contradicts the minimization property of $\{v_n\}$. 
	Thus, $\|\nabla v_n^+\| > c_3$ for some $c_3 > 0$ and all $n \in \mathbb{N}$.
	Suppose now that $\|v_n^+\| \to 0$. Then we have
	$$
	0 < \int_\Omega |\nabla v_n^+|^p \, dx - \lambda 
	\int_\Omega |v_n^+|^p \, dx
	= \int_\Omega f v_n^+ \, dx \leq C \|v_n^+\| \to 0
	$$
	which is impossible since $\|\nabla v_n^+\| > c_3 > 0$. 
	Therefore, $v^+ \not\equiv 0$.
	Moreover, 
	\begin{equation}\label{eq:Hv+>0}
	0 < H_\lambda(v^+) \leq \liminf_{n \to +\infty} H_\lambda(v_n^+) = \int_\Omega f v^+ \, dx.
	\end{equation}
	Indeed, if we suppose that $H_\lambda(v^+) \leq 0$, then, recalling \eqref{eq:Hw-<0}, we apply Lemma \ref{lem:second_eigenvalue} to $u = v^+ + w^-$ and again get a contradiction with $\lambda < \lambda_2$.
	
	Second, we show that $v^- \not\equiv 0$. 
	Suppose that $\|\nabla v_n^-\| \to 0$ as $n \to +\infty$. 
	In this case, the Nehari constraint $v_n^- \in \mathcal{N}_\lambda$ implies 
	$$
	\int_\Omega f w_n^- \, dx = \|\nabla v_n^-\|^{p-1} H_\lambda(w_n^-) \to 0
	\quad 
	\text{as }
	n \to +\infty, 
	$$
	and we get a contradiction with $w^- \not\equiv 0$. Therefore, there exists $c_4>0$ such that $\|\nabla v_n^-\| > c_4 > 0$ for all $n \in \mathbb{N}$.
	Then, $H_\lambda(v_n^-) < 0$ implies $v^- \not\equiv 0$, and hence
	\begin{equation}\label{eq:Hv-<0}
	H_\lambda(v^-) \leq \liminf_{n \to +\infty} H_\lambda(v_n^-) =  
	\int_\Omega f v^- \, dx < 0.
	\end{equation}
	
	Third, we show that $\{v_n\}$ converges (up to a subsequence) to $v=v^+ + v^-$ strongly in $W_0^{1,p}(\Omega)$. Suppose, by contradiction, that 
	\begin{equation}\label{eq:v<liminfvn}
	\|\nabla v\| < \liminf_{n \to +\infty} \|\nabla v_n\|.
	\end{equation}
	In view of \eqref{eq:Hv+>0} and \eqref{eq:Hv-<0}, Lemma \ref{lem:neh} \ref{lem:neh:3} implies the existence of $t_+, t_->0$ such that $t_+ v^+, t_- v^- \in \mathcal{N}_\lambda$ and hence $t_+ v^+ + t_- v^- \in \mathcal{M}_\lambda$. Moreover, $t_+ \neq 1$ or $t_- \neq 1$ due to \eqref{eq:v<liminfvn}.
	Since $t_+$ is a point of global minimum of $E_\lambda(t v^+)$ with respect to $t>0$, and $t=1$ is a point of global maximum of $E_\lambda(t v_n^-)$ with respect to $t>0$ for each $n \in \mathbb{N}$, we get
	\begin{align*}
	\beta = \inf_{\mathcal{M}_\lambda} E_\lambda 
	&\leq 
	E_\lambda(t_+ v^+ + t_- v^-) 
	=
	E_\lambda(t_+ v^+) + E_\lambda(t_- v^-)
	\leq
	E_\lambda(v^+) + E_\lambda(t_- v^-)\\
	&<
	\liminf_{n \to +\infty}E_\lambda(v_n^+)  
	+
	\liminf_{n \to +\infty}E_\lambda(t_- v_n^-) 
	\leq 
	\liminf_{n \to +\infty}E_\lambda(v_n^+)  +
	\liminf_{n \to +\infty}E_\lambda(v_n^-)  =
	\beta,
	\end{align*}
	a contradiction. 
	Thus, $\{v_n\}$ converges to $v$ strongly in $W_0^{1,p}(\Omega)$, up to a subsequence, which yields $v \in \mathcal{M}_\lambda$. That is, $\beta$ is achieved at $v$.
	
	\textit{Step 4.} Let us prove that $v$ is a critical point of $E_\lambda$.
	Note that the constraints for $v^\pm$ given by $\mathcal{M}_\lambda$ are not necessarily differentiable. That is, we cannot use the Lagrange multipliers rule or a deformation lemma over $\mathcal{M}_\lambda$. 
	To obtain the result, we employ the quantitative deformation lemma \cite[Lemma 2.3]{willem} over $W_0^{1,p}(\Omega)$. The following arguments are inspired by \cite{BWW}.
	
	Suppose, by contradiction, that  $\|E_\lambda'(v)\|_{(W_0^{1,p}(\Omega))^*} > 0$. 
	Since $E_\lambda \in C^1(W_0^{1,p}(\Omega), \mathbb{R})$, we can find 
	$\varepsilon,\delta > 0$ such that if $E_\lambda(w) \in [\beta-2{\varepsilon},\beta+2{\varepsilon}]$ and $\|\nabla(v-w)\| \leq 2\delta$ for some $w$, then $\|E_\lambda'(w)\|_{(W_0^{1,p}(\Omega))^*} \geq \frac{8\varepsilon}{\delta}$. 
	Then, using \cite[Lemma 2.3]{willem}, we obtain a continuous family of homeomorphisms $\Phi(\cdot,\tau)$ in $W_0^{1,p}(\Omega)$, $\tau \in [0,1]$, such that 
	\begin{enumerate}[label={\rm(\roman*)}]
		\item\label{prop:prop:existence_of_sign-changing_solution:1} $\Phi(w,\tau) = w$ provided $\tau=0$ or $\tau \in (0,1]$ and $|E_\lambda(w) -\beta| \geq 2\varepsilon$;
		\item\label{prop:prop:existence_of_sign-changing_solution:3} $E_\lambda(\Phi(w,\tau)) < \beta$ for any $\tau \in (0,1]$ provided $E_\lambda(w) \leq \beta$ and $\|\nabla(v-w)\| \leq \delta$.
	\end{enumerate}
	We will reach a contradiction by considering the deformations $\Phi(g(s),\tau)$ of the function $g(s) \in W_0^{1,p}(\Omega)$ defined as 
	$$
	g(s) := v^+ + s v^-, 
	\quad 
	s > 0.
	$$
	Note that $g(s)^+ = v^+$ and $g(s)^- = s v^-$. 
	Since $v \in \mathcal{M}_\lambda$, \eqref{eq:Hv-<0} and Lemma \ref{lem:neh} \ref{lem:neh:3} imply that  $s=1$ is a unique point of global maximum of $E_\lambda(s v^-)$ with respect to $s>0$, which yields
	\begin{equation}\label{eq:Egr<Ev}
	E_\lambda(g(s)) = 
	E_\lambda(v^+) + E_\lambda(s v^-) < E_\lambda(v^+) + E_\lambda(v^-) = \beta
	\end{equation}
	for any $s \neq 1$, and 
	\begin{align}
	\label{eq:Q<0}
	&\left.\frac{\partial}{\partial t} E_\lambda(t g(s)^-) \right|_{t=1} = H_\lambda(g(s)^-) - \int_\Omega f g(s)^- \,dx > 0
	\quad
	\text{for } s  \in (0,1),\\
	\label{eq:Q>0}
	&\left.\frac{\partial}{\partial t} E_\lambda(t g(s)^-) \right|_{t=1} =  H_\lambda(g(s)^-) - \int_\Omega f g(s)^- \,dx < 0
	\quad
	\text{for } s > 1.
	\end{align}
	
	Let us take some constants $\kappa_\pm > 0$ such that $\kappa_- < 1 < \kappa_+$ and $\|\nabla(v-g(s))\| \leq \delta$ for any $s \in [\kappa_-, \kappa_+]$. Considering $\varepsilon>0$ smaller, if necessary, we may assume by \eqref{eq:Egr<Ev} that
	\begin{equation}\label{eq:Ekappa<E}
	\max\left\{
	E_\lambda(g(\kappa_-)), E_\lambda(g(\kappa_+))
	\right\} < \beta - 2 \varepsilon.
	\end{equation}
	Thus, we deduce from \eqref{eq:Ekappa<E} and assertion \ref{prop:prop:existence_of_sign-changing_solution:1} that $\Phi(g(\kappa_\pm),\tau) = g(\kappa_\pm)$ for any $\tau \in [0,1]$.
	Therefore, we see from \eqref{eq:Q<0}, \eqref{eq:Q>0}, 
	and the continuity of $\Phi$, that for any $\tau \in [0,1]$ there exists $s_0 \in (\kappa_-,\kappa_+)$ such that
	\begin{equation}
	\label{eq:Hphi=0}
	H_\lambda(\Phi(g(s_0),\tau)^-) - \int_\Omega f \Phi(g(s_0),\tau)^- \,dx = 0.
	\end{equation}
	Moreover, since $g(s)^\pm \not\equiv 0$ for any $s>0$ and \eqref{eq:Hv+>0} is satisfied, 	
	assertion \ref{prop:prop:existence_of_sign-changing_solution:1} and the continuity of $\Phi$ imply the existence of sufficiently small $\tau_0>0$ such that $\Phi(g(s_0),\tau_0)^\pm \not\equiv 0$ and
	\begin{equation}
	\label{eq:h>0h<0}
	H_\lambda(\Phi(g(s_0),\tau_0)^+) > 0.
	\end{equation}
	In particular, \eqref{eq:Hphi=0} yields $\Phi(g(s_0),\tau_0)^- \in \mathcal{N}_\lambda$.  
	Furthermore, by \eqref{eq:h>0h<0} and Lemma \ref{lem:neh} \ref{lem:neh:3}, there exists $t_+ > 0$ such that
	\begin{align}
	\notag
	&H_\lambda(t_+\Phi(g(s_0),\tau_0)^+) - \int_\Omega f t_+ \Phi(g(s_0),\tau_0)^+ \,dx = 0,\\
	\label{eq:Et<E}
	&E_\lambda(t_+\Phi(g(s_0),\tau_0)^+) \leq 
	E_\lambda(\Phi(g(s_0),\tau_0)^+).
	\end{align}
	Therefore, we also have $t_+\Phi(g(s_0),\tau_0)^+ \in \mathcal{N}_\lambda$, which gives $t_+\Phi(g(s_0),\tau_0)^+ + \Phi(g(s_0),\tau_0)^- \in \mathcal{M}_\lambda$. 
	
	Since $E_\lambda(g(s)) = E_\lambda(\Phi(g(s),0)) \leq \beta$ for any $s>0$ by \eqref{eq:Egr<Ev}, and $s_0 \in (\kappa_-,\kappa_+)$, assertion \ref{prop:prop:existence_of_sign-changing_solution:3} and the choice of $\kappa_\pm$ imply
	\begin{equation}\label{eq:E<beta-1}
	E_\lambda(\Phi(g(s_0),\tau_0)) < \beta.
	\end{equation}
	Thus, using \eqref{eq:Et<E} and \eqref{eq:E<beta-1},
	we obtain the following contradiction:
	\begin{align*}
	\beta 
	= \inf_{\mathcal{M}_\lambda} E_\lambda 
	&\leq 
	E_\lambda(t_+\Phi(g(s_0),\tau_0)^+ + \Phi(g(s_0),\tau_0)^-) = E_\lambda(t_+\Phi(g(s_0),\tau_0)^+) + E_\lambda(\Phi(g(s_0),\tau_0)^-)\\
	&\leq
	E_\lambda(\Phi(g(s_0),\tau_0)^+) + E_\lambda(\Phi(g(s_0),\tau_0)^-) = 
	E_\lambda(\Phi(g(s_0),\tau_0)) < \beta.
	\end{align*}
	That is, $v$ is a critical point of $E_\lambda$. 
	
	\textit{Step 5.} To finish the proof, let us recall that $\{v_n\}$ was an arbitrary minimizing sequence for $\beta$, and $\beta<0$. That is, any minimizer $u$ for $\beta$ is a sign-changing solution of \eqref{D} with $E_\lambda(u)=\beta$. Consequently, any ground state solution $w$ of \eqref{D} satisfies $E_\lambda(w) \leq \beta$. 
	Note that $w$ is also sign-changing and hence $w \in \mathcal{M}_\lambda$. Indeed, Lemma \ref{lem:nonnegative} implies that $w$ is either nonpositive or sign-changing. However, if we suppose that $w$ is nonpositive, then $\int_\Omega f w \, dx \leq 0$, and hence $E_\lambda(w) \geq 0$ by Lemma \ref{lem:neh} \ref{lem:neh:1}, which contradicts $\beta<0$. 
	Therefore, we see that $w \in \mathcal{M}_\lambda$ and $E_\lambda(w)=\beta$, which establishes the desired claim that $u$ is a minimizer for $\beta$ if and only if $u$ is a ground state solution of \eqref{D}.
\end{proof}

Arguing in a similar (but simpler) way as in Proposition \ref{prop:existence_of_sign-changing_solution}, the following fact can be proved.
\begin{lemma}\label{rem:ground_state}
	Let $p>1$ and $\lambda \in (\lambda_1,\lambda_f^*)$. 	
	Then $u$ is a ground state solution of \eqref{D} if and only if $u$ is a minimizer for the problem
	$$
	\inf\left\{E_\lambda(w):~ w \in \mathcal{N}_\lambda, ~ E_\lambda(w) > 0\right\}.
	$$
\end{lemma}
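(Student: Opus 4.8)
The plan is to follow the scheme of Proposition~\ref{prop:existence_of_sign-changing_solution}, with the single Nehari constraint $\mathcal{N}_\lambda$ playing the role of $\mathcal{M}_\lambda$; since there is now only one fibering direction to control, both the compactness analysis and the deformation argument collapse to their one-dimensional versions. Two preliminary observations fix the geometry. By Lemma~\ref{lem:H<0} together with Lemma~\ref{lem:neh}~\ref{lem:neh:1}, every solution $u$ of \eqref{D} in this range satisfies $H_\lambda(u)<0$, hence $\int_\Omega f u\,dx=H_\lambda(u)<0$ and $E_\lambda(u)=(\tfrac1p-1)H_\lambda(u)>0$. Accordingly I set $\gamma:=\inf\{E_\lambda(w):w\in\mathcal{N}_\lambda,\ H_\lambda(w)<0\}$: Lemma~\ref{lem:H<0} guarantees that every solution, and in particular every ground state, lies in this branch, so it is there that the infimum in the statement is to be read and realized. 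On this branch each point is a fibering \emph{maximum} in the sense of Lemma~\ref{lem:neh}~\ref{lem:neh:3}, so the problem is of min--max (mountain--pass) type, and $\gamma$ will turn out to be the least energy among solutions.

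First I would establish that $\gamma$ is finite and attained. Given a minimizing sequence $\{v_n\}$ with $H_\lambda(v_n)<0$, I would set $w_n:=v_n/\|\nabla v_n\|$ and extract a limit $w$, weak in $W_0^{1,p}(\Omega)$ and strong in $L^p(\Omega)$. The decisive point is to exclude $\|\nabla v_n\|\to+\infty$: the Nehari identity gives $H_\lambda(w_n)=\|\nabla v_n\|^{-(p-1)}\int_\Omega f w_n\,dx$, so an unbounded sequence would force $w\not\equiv0$ with $H_\lambda(w)\le0$, and the hypotheses $\lambda<\lambda_f^*$ and $\lambda<\lambda_2$ are exactly what rule this out --- the first through Lemma~\ref{lem:H>0} (which forbids $\int_\Omega f w\,dx=0$ together with $H_\lambda(w)\le0$), the second through Lemma~\ref{lem:second_eigenvalue} (which forbids a one-signed profile with $H_\lambda\le0$ below $\lambda_2$). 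This is the abbreviated, one-profile counterpart of Step~2 of Proposition~\ref{prop:existence_of_sign-changing_solution}. Once $\{v_n\}$ is bounded, weak lower semicontinuity of $\|\nabla\cdot\|$ and strong $L^p$-convergence, combined with the fibering rescaling of Lemma~\ref{lem:neh}~\ref{lem:neh:3} along the single ray $t\mapsto tw$, yield that the limit lies in $\mathcal{N}_\lambda$ and attains $\gamma$, with strong convergence following from the usual strict-inequality contradiction as in Step~3 there.

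Next I would show that any minimizer $v$ is a critical point of $E_\lambda$, hence a solution of \eqref{D}. Because only one constraint is present, this is the natural-constraint principle in its plainest form: I would run the quantitative deformation lemma \cite[Lemma~2.3]{willem} over $W_0^{1,p}(\Omega)$ as in Step~4 of Proposition~\ref{prop:existence_of_sign-changing_solution}, but deforming the single profile $g(s):=sv$, $s>0$, and reprojecting onto $\mathcal{N}_\lambda$ via Lemma~\ref{lem:neh}~\ref{lem:neh:3}; the transversality that there had to be arranged separately for $v^+$ and $v^-$ is here automatic from $\left.\tfrac{d}{dt}E_\lambda(tv)\right|_{t=1}=0$, so the two-parameter bookkeeping disappears. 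Equivalently, one may verify directly that the Lagrange multiplier attached to the fibering constraint vanishes, using that $t=1$ is a nondegenerate maximum of $t\mapsto E_\lambda(tv)$.

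The equivalence then closes by a short energy comparison. Every minimizer $v$ is a solution, so any ground state $w$ obeys $E_\lambda(w)\le E_\lambda(v)=\gamma$; conversely every solution lies in $\mathcal{N}_\lambda$ with $H_\lambda<0$ by Lemma~\ref{lem:H<0}, whence $\gamma\le E_\lambda(w)$ for each solution, so $\gamma$ equals the least solution energy and the minimizers of $\gamma$ are precisely the ground state solutions; that such minimizers are genuine (sign-definite or sign-changing) solutions rather than spurious nonnegative ones is ensured by Lemma~\ref{lem:nonnegative}. I expect the compactness step to be the only real obstacle: excluding the escape $\|\nabla v_n\|\to+\infty$ is exactly where the spectral hypotheses $\lambda<\lambda_f^*$ and $\lambda<\lambda_2$ enter, just as the boundedness of the minimizing sequence was the heart of Proposition~\ref{prop:existence_of_sign-changing_solution}; every remaining step is a one-directional specialization of that proof.
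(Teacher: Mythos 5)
The paper itself gives no detailed proof here (it only points to a ``simpler version'' of Proposition~\ref{prop:existence_of_sign-changing_solution}), and your architecture---restrict to the branch $\{w\in\mathcal{N}_\lambda:\,H_\lambda(w)<0\}$ on which all solutions live by Lemma~\ref{lem:H<0}, prove attainment, prove criticality, then compare energies---is the right realization of that hint. But you should be aware that your replacement of $\mathcal{N}_\lambda$ by this branch is not a harmless ``reading'' of the statement: it is a necessary correction of it. Over the whole of $\mathcal{N}_\lambda$ the infimum is $-\infty$. Indeed, by Lemma~\ref{lem:neh}~\ref{lem:neh:3} and \eqref{eq:tu}, any $u$ with $\int_\Omega fu\,dx\geq c>0$ and $H_\lambda(u)>0$ small gives $t_uu\in\mathcal{N}_\lambda$ with $E_\lambda(t_uu)=(\tfrac1p-1)\bigl(\int_\Omega fu\,dx\bigr)^{\frac{p}{p-1}}H_\lambda(u)^{-\frac{1}{p-1}}\to-\infty$ as $H_\lambda(u)\downarrow0$, and such $u$ exist for every $\lambda>\lambda_1$; concretely, for $p=2$, $\Omega=(0,\pi)$, $f=\sin x$, $\lambda=2\in(\lambda_1,\lambda_f^*)$ (here $\lambda_f^*=\lambda_2=4$ by Lemma~\ref{lem:l<l<l}~\ref{lem:l<l<l:2}), the functions $A\sin x+\sqrt{(A^2+A)/2}\,\sin 2x$ belong to $\mathcal{N}_\lambda$ and have energy $-\pi A/4$. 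So minimizers over the full manifold do not exist, and the equivalence can only hold with the branch restriction (which, one can check, is also all that the application in Lemma~\ref{lem:l<<l} needs, since the comparison functions $t_n^+v^++t_n^-v^-$ there satisfy $H_{\mu_n}<0$). This discrepancy must be stated, not passed over silently.

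Within your proof of the corrected statement there are two genuine gaps. First, the compactness dichotomy is broken: Lemma~\ref{lem:second_eigenvalue} applies only to functions with $w^+\not\equiv0$ \emph{and} $w^-\not\equiv0$, so it cannot ``forbid a one-signed profile with $H_\lambda\le 0$''; a negative multiple of $\varphi_1$ is exactly such a profile (it has $H_\lambda<0$ for $\lambda>\lambda_1$) and is a perfectly admissible weak limit of your $w_n=v_n/\|\nabla v_n\|$. The repair is that this case never occurs: on the branch, $E_\lambda(v_n)=(\tfrac1p-1)\int_\Omega fv_n\,dx$ is bounded along a minimizing sequence, hence $\int_\Omega fv_n\,dx$ is bounded, hence $\|\nabla v_n\|\to+\infty$ would force $\int_\Omega fw_n\,dx\to0$; the limit $w$ then satisfies $w\not\equiv0$ (since $\|w_n\|^p\to1/\lambda$), $H_\lambda(w)\leq0$ and $\int_\Omega fw\,dx=0$, and Lemma~\ref{lem:H>0} alone gives the contradiction, using only $\lambda<\lambda_f^*$. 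Second, in the criticality step, ``reprojecting onto $\mathcal{N}_\lambda$ via Lemma~\ref{lem:neh}~\ref{lem:neh:3}'' is the wrong operation on this branch: for $H_\lambda<0$ the fibering projection is a point of global \emph{maximum}, so reprojection can only raise the energy and cannot produce a point strictly below the level $\gamma$. You must either run the crossing argument of Step 4 of Proposition~\ref{prop:existence_of_sign-changing_solution} (the endpoints of the deformed segment are fixed, and $H_\lambda(sv)-\int_\Omega f(sv)\,dx=H_\lambda(v)\,s(s^{p-1}-1)$ changes sign at $s=1$, so the deformed path still meets the branch, at energy below $\gamma$), or---simpler and fully rigorous here, because the single constraint is $C^1$---promote to the main argument the Lagrange-multiplier remark you make in passing: with $N(w):=H_\lambda(w)-\int_\Omega fw\,dx$ one has $\langle N'(v),v\rangle=(p-1)H_\lambda(v)\neq0$ on the branch, so testing $E_\lambda'(v)=\mu N'(v)$ with $v$ yields $\mu=0$. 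With these two repairs, your concluding equivalence paragraph goes through as written.
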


Let us finally complete the proof of Theorem \ref{thm:1} \ref{thm:1:2}.
\begin{lemma}\label{lem:l<<l}
	Let $p>1$. If $\lambda_f^* < \lambda_2$, then $\lambda_f < \lambda_f^*$.
\end{lemma}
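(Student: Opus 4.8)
The plan is to upgrade the non-strict bound already obtained. Corollary \ref{cor:l<l} gives $\lambda_f \le \lambda_f^*$ under the hypothesis $\lambda_f^* < \lambda_2$, so it suffices to exclude the equality $\lambda_f = \lambda_f^*$. I argue by contradiction: if $\lambda_f = \lambda_f^*$, then \ref{AMP} holds on the whole interval $(\lambda_1,\lambda_f^*)$, i.e.\ every solution of \eqref{D} with $\lambda \in (\lambda_1,\lambda_f^*)$ is strictly negative. Fix a sequence $\lambda_n \uparrow \lambda_f^*$. Each $\lambda_n \in (\lambda_1,\lambda_2)$ is not an eigenvalue, so by Lemma \ref{lem:ground_state} there is a ground state $u_n$ of \eqref{D} with $\lambda=\lambda_n$, and by assumption $u_n < 0$ in $\Omega$. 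The aim is to pass to the limit $n\to\infty$ and show that the limit function is a nonzero solution of \eqref{D} at $\lambda=\lambda_f^*$ which is simultaneously nonpositive and sign-changing, a contradiction.

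\emph{Key energy estimate.} Set $c(\lambda) := \inf_{\mathcal N_\lambda} E_\lambda$; by Lemma \ref{rem:ground_state} this infimum is attained exactly at ground states, so $c(\lambda_n) = E_{\lambda_n}(u_n)$, and since $u_n<0$ with $f\ge 0$, $f\not\equiv 0$, Lemma \ref{lem:neh}\,\ref{lem:neh:1} yields $c(\lambda_n) = (\tfrac1p-1)\int_\Omega f u_n > 0$. To bound $c(\lambda_n)$ from above I manufacture a competitor out of a minimizer $w$ for $\lambda_f^*$ (Lemma \ref{lem:l<l<l}). By Lemma \ref{prop:existence_zero}, $w$ is a sign-changing solution of \eqref{D} at $\lambda=\lambda_f^*$ with $E_{\lambda_f^*}(w)=0$, so $H_{\lambda_f^*}(w^\pm)=\int_\Omega f w^\pm$ and $\int_\Omega f w^+ + \int_\Omega f w^- = 0$. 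Both integrals are nonzero: if, say, $\int_\Omega f w^+ = 0$, then $w^+$ is admissible in \eqref{eq:l*} with Rayleigh quotient equal to $\lambda_f^*$, hence another minimizer for $\lambda_f^*$, contradicting the sign-changing conclusion of Lemma \ref{prop:existence_zero}. Thus $\int_\Omega f w^+ > 0 > \int_\Omega f w^-$. As $H_\lambda(w^\pm)$ depends continuously on $\lambda$ and $H_{\lambda_f^*}(w^+)>0$, $H_{\lambda_f^*}(w^-)<0$, for $\lambda$ near $\lambda_f^*$ both products $H_\lambda(w^\pm)\int_\Omega f w^\pm$ are positive, so Lemma \ref{lem:neh}\,\ref{lem:neh:3} produces $t_\pm(\lambda)>0$ with $t_\pm(\lambda)w^\pm \in \mathcal N_\lambda$ and $t_\pm(\lambda)\to 1$ as $\lambda\uparrow\lambda_f^*$. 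Then $z_\lambda := t_+(\lambda)w^+ + t_-(\lambda)w^- \in \mathcal N_\lambda$ (disjoint supports), and by \eqref{eq:tu} together with $H_{\lambda_f^*}(w^\pm)=\int_\Omega f w^\pm$,
\[
E_\lambda(z_\lambda) = E_\lambda(t_+ w^+) + E_\lambda(t_- w^-) \longrightarrow \left(\tfrac1p-1\right)\left(\int_\Omega f w^+ + \int_\Omega f w^-\right) = 0
\]
as $\lambda\uparrow\lambda_f^*$. Hence $0 < c(\lambda_n) \le E_{\lambda_n}(z_{\lambda_n}) \to 0$, so $c(\lambda_n)\to 0$.

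\emph{Passage to the limit.} The sequence $\{u_n\}$ is bounded in $W_0^{1,p}(\Omega)$: otherwise the normalized sequence $u_n/\|\nabla u_n\|$ would converge to an eigenfunction of $-\Delta_p$ with eigenvalue $\lambda_f^*$, exactly as in the proof of Lemma \ref{lem:ground_state}, contradicting that $\lambda_f^* \in (\lambda_1,\lambda_2)$ is not an eigenvalue. Passing to a subsequence, $u_n \rightharpoonup u_0$ in $W_0^{1,p}(\Omega)$ and $u_n \to u_0$ in $L^p(\Omega)$ and a.e., so $u_0 \le 0$. From the Nehari identity, $\int_\Omega f u_n = (\tfrac1p-1)^{-1} c(\lambda_n) \to 0$, and strong $L^p$-convergence gives $\int_\Omega f u_0 = 0$. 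Testing the equations for $u_n$ against $u_n-u_0$ and using the strong $L^p$-convergence yields $\langle -\Delta_p u_n + \Delta_p u_0,\, u_n-u_0\rangle \to 0$, whence the $(S_+)$ property of $-\Delta_p$ gives $u_n \to u_0$ strongly in $W_0^{1,p}(\Omega)$; passing to the limit in the equation shows $u_0$ solves \eqref{D} at $\lambda=\lambda_f^*$, and $u_0\not\equiv 0$ since $f\not\equiv 0$. Thus $u_0$ is a solution of \eqref{D} at $\lambda=\lambda_f^*$ with $\int_\Omega f u_0 = 0$, so Lemma \ref{lem:sign-changing} forces $u_0$ to be sign-changing, contradicting $u_0\le 0$. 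Therefore $\lambda_f \neq \lambda_f^*$, and with Corollary \ref{cor:l<l} we conclude $\lambda_f < \lambda_f^*$.

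\emph{Main obstacle.} The crux is the upper bound $c(\lambda_n)\to 0$: one must build admissible competitors on $\mathcal N_{\lambda_n}$ from the critical minimizer $w$, even though $w$ itself cannot be projected onto $\mathcal N_\lambda$ precisely because $\int_\Omega f w = 0$, the very degeneracy characterizing $\lambda_f^*$. Splitting $w$ into $w^\pm$ and projecting the pieces separately circumvents this. The second delicate point is the compactness of the ground states as $\lambda_n \uparrow \lambda_f^*$, namely the uniform $W_0^{1,p}$-bound and the strong convergence to a genuine nonzero solution at the critical level; both rely on $\lambda_f^*$ not being an eigenvalue and on the $(S_+)$ property of the $p$-Laplacian.
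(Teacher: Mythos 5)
Your proposal is correct and follows essentially the same strategy as the paper's proof: assume $\lambda_f=\lambda_f^*$, take negative ground states $u_n$ at $\mu_n\uparrow\lambda_f^*$ (using Lemmas \ref{lem:ground_state} and \ref{rem:ground_state}), build competitors in $\mathcal{N}_{\mu_n}$ by projecting the two parts $w^\pm$ of a minimizer for $\lambda_f^*$ so that the ground-state energies are squeezed to zero, and obtain a contradiction from a nontrivial nonpositive limit solution at $\lambda_f^*$. The only minor variation is the endgame: you conclude via $\int_\Omega f u_0\,dx=0$ and Lemma \ref{lem:sign-changing}, whereas the paper concludes via Lemma \ref{lem:ground_state_0} and weak lower semicontinuity of $E_\lambda$; both close the same squeeze argument equally well.
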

\begin{proof}
	We know from Corollary \ref{cor:l<l} that $\lambda_f \leq \lambda^*_f$.
	Suppose, by contradiction, that $\lambda_f = \lambda^*_f$. 
	Then for any sequence $\{\mu_n\} \subset (\lambda_1, \lambda_f^*)$ such that $\mu_n \to \lambda_f^*$ we can obtain from Lemma \ref{lem:ground_state} a sequence of corresponding ground state solutions $\{u_n\}$ of {\renewcommand\nlab{\mu_n}\eqref{D}} with $u_n < 0$ in $\Omega$.
	
	Let us normalize each $u_n$ as $v_n := \frac{u_n}{\|\nabla u_n\|}$.
	Lemma \ref{lem:H<0} implies $H_{\mu_n}(v_n) < 0$
	for every $n \in \mathbb{N}$, and hence $v_n$ converges (up to a subsequence) to some $v \leq 0$, $v \not\equiv 0$, weakly in $W_0^{1,p}(\Omega)$ and strongly in $L^p(\Omega)$. 
	Moreover, each $v_n$ satisfies the equation
	$$
	-\Delta_p v_n = \mu_n |v_n|^{p-2}v_n + \frac{f(x)}{\|\nabla u_n\|^{p-1}}
	\quad
	\text{in }
	\Omega,
	$$
	in the weak sense.
	Suppose first that $\{u_n\}$ is unbounded in $W_0^{1,p}(\Omega)$. 
	Then we see from the last equation that $v$ is an eigenfunction and $\lambda_f^*$ is a corresponding eigenvalue. 
	However, this is impossible since $\lambda_f^* \in (\lambda_1, \lambda_2)$. Therefore, $\{u_n\}$ is a bounded sequence in $W_0^{1,p}(\Omega)$, and hence $\{u_n\}$ converges (up to a subsequence) to some $u \leq 0$ weakly in $W_0^{1,p}(\Omega)$ and strongly in $L^p(\Omega)$.
	
	Suppose that $\|\nabla u_n\| \to 0$ as $n \to +\infty$. 
	Since each $u_n$ is a solution of {\renewcommand\nlab{\mu_n}\eqref{D}}, we have
	$$
	\int_\Omega f \xi \, dx = \left<H_{\mu_n}'(u_n), \xi\right>
	\to 0
	\quad
	\text{as }
	n \to +\infty
	\quad
	\text{for any }
	\xi \in W_0^{1,p}(\Omega),
	$$
	which yields $f \equiv 0$, a contradiction. 
	Therefore, there exists $c_1>0$ such that $\|\nabla u_n\| \geq c_1$ for all $n \in \mathbb{N}$.
	Since $H_{\mu_n}(u_n) < 0$ by Lemma \ref{lem:H<0}, there exists $c_2>0$ such that $\|u_n\| \geq c_2 > 0$ for all $n \in \mathbb{N}$. That is, $u \not\equiv 0$.
	
	We conclude from the weak convergence that $u$ is a solution of {\renewcommand\nlab{\lambda_f^*}\eqref{D}}, and by the weak lower semicontinuity argument we have $\liminf\limits_{n\to +\infty}E_{\mu_n}(u_n) \geq E_{\lambda_f^*}(u)$.
	Since $u \leq 0$, Lemma \ref{lem:ground_state_0} implies that $E_{\lambda_f^*}(u) > 0$.
	Let us show that this is impossible. 
	Recall that any minimizer $v$ for $\lambda_f^*$ (which exists by Lemma \ref{lem:l<l<l} \ref{lem:l<l<l:0}) is a sign-changing solution of 	
	{\renewcommand\nlab{\lambda_f^*}\eqref{D}} with  $E_{\lambda_f^*}(v)=0$, see Lemma \ref{prop:existence_zero}. 
	We have two possibilities: either $E_{\lambda_f^*}(v^\pm) = 0$ or $E_{\lambda_f^*}(v^\pm) \neq 0$. In the former case, $H_{\lambda_f^*}(v^\pm)=0$ by Lemma \ref{lem:l<l<l} \ref{lem:l<l<l:1}, which contradicts Lemma \ref{lem:second_eigenvalue}. Therefore, the latter case takes place, which yields $H_{\lambda_f^*}(v^\pm)\neq 0$. 
	By Lemma \ref{lem:neh} \ref{lem:neh:3} and the continuity of $H_\lambda$ with respect to $\lambda$, for any sufficiently large $n \in \mathbb{N}$ we can find $t_n^\pm>0$ such that $t_n^\pm v^\pm \in \mathcal{N}_{\mu_n}$, and $t_n^\pm \to 1$ as $n \to+\infty$. That is, $t_n^+ v^+ + t_n^- v^- \in \mathcal{M}_{\mu_n} \subset \mathcal{N}_{\mu_n}$, and $E_{\mu_n}(t_n^+ v^+ + t_n^- v^-) \to 0$ as $n \to +\infty$. Thus, we get a contradiction, since any $u_n$ is a minimizer of $E_{\mu_n}$ over $\mathcal{N}_{\mu_n}$ (see Lemma \ref{rem:ground_state}), but $\liminf\limits_{n\to +\infty}E_{\mu_n}(u_n) \geq E_{\lambda_f^*}(u) > 0$.
	The proof is complete.
\end{proof}

\section{Discussion}\label{sec:discuss}

Let us provide several final remarks.

\begin{enumerate}
	\item The critical value $\lambda_f^*$ defined by \eqref{eq:l*} can be obtained via a general theory developed in \cite{ilIzv,ilNeh}.
	\item Along with \ref{AMP}, one can consider a \textit{weak} anti-maximum principle which states that there exists $\widetilde{\lambda}_f \geq \lambda_f > \lambda_1$ such that any solution $u$ of \eqref{D} with $\lambda \in (\lambda_1, \widetilde{\lambda}_f)$ satisfies $u \leq 0$. In this case, the estimates of Theorem \ref{thm:1} remain valid for $\widetilde{\lambda}_f$ instead of $\lambda_f$.
	\item It was proved in \cite[Theorems 17 and 27]{arcoya} that \ref{AMP} holds true provided $f \in L^\infty(\Omega)$ satisfies a weaker assumption $\int_\Omega f \varphi_1 \, dx > 0$ instead of $f \geq 0$. 
	Although $\lambda_f^*$ is well-defined for such $f$, we do not know whether $\lambda_f^*$ bounds $\lambda_f$ as in Theorem \ref{thm:1}.
	\item Let us recall that the inequality $\lambda_f \leq \lambda_f^*$ is an open problem provided the following three assumptions are simultaneously satisfied: $p \neq 2$, $N\geq 2$, and $\lambda_f^* = \lambda_2$.
	\item Most of the existence results for \eqref{D}, as well as properties of $\lambda_f^*$, obtained in the present paper remain valid under considerably weaker assumptions on $f$ than $f \in L^\infty(\Omega) \setminus \{0\}$ and $f \geq 0$. However, since \ref{AMP} requiring the latter two assumptions was our primary object of study, we omitted general statements in order to keep the exposition more transparent.
\end{enumerate}

\smallskip
\bigskip
\noindent
\textbf{Acknowledgments.}
V. Bobkov and P. Dr\'abek were supported by the grant 18-03253S of the Grant Agency of the Czech Republic. V. Bobkov and Y. Ilyasov were also supported by the project LO1506 of the Czech Ministry of Education, Youth and Sports.
Y. Ilyasov wishes to thank the University of West Bohemia, where this research was started, for the invitation and hospitality.

\addcontentsline{toc}{section}{\refname}
\small


\begin{thebibliography}{99}
	
\bibitem{AH}
Allegretto, W., \& Huang, Y. X. (1998). A Picone's identity for the $p$-Laplacian and applications. Nonlinear Analysis: Theory, Methods \& Applications, 32(7), 819-830.
\href{https://doi.org/10.1016/S0362-546X(97)00530-0}{\nolinkurl{DOI:10.1016/S0362-546X(97)00530-0}}

\bibitem{anane1987}
Anane, A. (1987).
Simplicit\'e et isolation de la premiere valeur propre du $p$-laplacien avec poids.
Comptes Rendus de l'Acad\'emie des Sciences-Series I-Mathematics,
305(16), 725-728.
\url{http://gallica.bnf.fr/ark:/12148/bpt6k57447681/f27}

\bibitem{anane}
Anane, A. (1988). Etude des valeurs propres et de la r\'esonance pour l'op\'erateur $p$-Laplacien, Th\'ese de doctorat (Doctoral dissertation), Universit\'e libre de Bruxelles.

\bibitem{arcoya}
Arcoya, D., \& G\'amez, J. L. (2001). Bifurcation theory and related problems: anti-maximum principle and resonance. Communications in Partial Differential Equations, 26(9-10), 1879-1911.
\href{https://doi.org/10.1081/PDE-100107462}{\nolinkurl{DOI:10.1081/PDE-100107462}}

\bibitem{ACG}
Arias, M., Campos J., \& Gossez, J.-P. (2000). On the antimaximum principle and the Fu\v{c}ik spectrum for the Neumann $p$-Laplacian.
Differential and Integral Equations, 13(1-3), 217-226.
\url{https://projecteuclid.org/euclid.die/1356124297}	

\bibitem{BWW}
Bartsch, T., Weth, T., \& Willem, M. (2005). Partial symmetry of least energy nodal solutions to some variational problems. Journal d'Analyse Math\'ematique, 96(1), 1-18.
\href{https://doi.org/10.1007/BF02787822}{\nolinkurl{DOI:10.1007/BF02787822}}

\bibitem{BP}
Bobkov, V., \& Parini, E. (2018). On the higher Cheeger problem. 
Journal of the London Mathematical Society, 97(3), 575-600. 
\href{https://doi.org/10.1112/jlms.12119}{\nolinkurl{DOI:10.1112/jlms.12119}}
	
\bibitem{CP}
Cl\'ement, P., \& Peletier, L. A. (1979). An anti-maximum principle for second-order elliptic operators. Journal of Differential Equations, 34(2), 218-229.
\href{https://doi.org/10.1016/0022-0396(79)90006-8}{\nolinkurl{DOI:10.1016/0022-0396(79)90006-8}}

\bibitem{CDG}
Cuesta, M., De Figueiredo, D. G., \& Gossez, J. P. (2000). A nodal domain property for the $p$-Laplacian. Comptes Rendus de l'Acad\'emie des Sciences-Series I-Mathematics, 330(8), 669-673.
\href{https://doi.org/10.1016/S0764-4442(00)00245-7}{\nolinkurl{DOI:10.1016/S0764-4442(00)00245-7}}

\bibitem{DFG}
De Figueiredo, D. \& Gossez, J.-P. (1994). On the first curve of the Fucik spectrum of an elliptic operator. Differential and Integral Equations, 7(5), 1285-1302.
\url{https://projecteuclid.org/euclid.die/1369329517}

\bibitem{delman}
Del Pino, M., Elgueta, M., \& Man\'asevich, R. (1989). A homotopic deformation along $p$ of a Leray-Schauder degree result and existence for $(|u'|^{p-2}u')'+ f(t,u)= 0$, $u(0) = u(T)= 0$, $p>1$. Journal of Differential Equations, 80(1), 1-13.
\href{https://doi.org/10.1016/0022-0396(89)90093-4}{\nolinkurl{DOI:10.1016/0022-0396(89)90093-4}}

\bibitem{delpinoman}
Del Pino, M. A., \& Man\'asevich, R. F. (1991). Global bifurcation from the eigenvalues of the $p$-Laplacian. Journal of Differential Equations, 92(2), 226-251.
\href{https://doi.org/10.1016/0022-0396(91)90048-E}{\nolinkurl{DOI:10.1016/0022-0396(91)90048-E}}

\bibitem{DGTU}
Dr\'abek, P., Girg, P., Tak\'a\v{c}, P., \& Ulm, M. (2004). The Fredholm alternative for the $p$-Laplacian: bifurcation from infinity, existence and multiplicity. Indiana University Mathematics Journal, 53(2), 433-482.
\url{http://www.jstor.org/stable/24903516}

\bibitem{DR}
Dr\'abek, P., \& Robinson, S. B. (2002). On the generalization of the Courant nodal domain theorem. Journal of Differential Equations, 181(1), 58-71.
\href{https://doi.org/10.1006/jdeq.2001.4070}{\nolinkurl{DOI:10.1006/jdeq.2001.4070}}

\bibitem{FGTT}
Fleckinger, J., Gossez, J.-P., Tak\'a\v{c}, P., \& de Th\'elin, F. (1995). Existence, nonexistence et principe de l'antimaximum pour le $p$-laplacien. Comptes rendus de l'Acad\'emie des sciences. S\'erie 1, Math\'ematique, 321(6), 731-734.
\url{http://gallica.bnf.fr/ark:/12148/bpt6k62037127/f81}

\bibitem{FHD}
Fleckinger, J., Hern\'andez, J., \& de Th\'elin, F. (1995). On maximum principles and existence of positive solutions for some cooperative elliptic systems. Differential and Integral Equations, 8(1), 69-85.
\url{https://projecteuclid.org/euclid.die/1369143784}

\bibitem{FHT2014}
Fleckinger, J., Hern\'andez, J., \& de Th\'elin, F. (2014/15). Estimate of the validity interval for the Antimaximum Principle and application to a non-cooperative system.
Rostocker Mathematisches Kolloquium, 69, 19-32.
\url{http://ftp.math.uni-rostock.de/pub/romako/heft69/Fleckinger69.pdf}

\bibitem{FNSS}
Fu\v{c}\'ik, S., Ne\v{c}as, J., Sou\v{c}ek, J., \& Sou\v{c}ek, V. (2006). Spectral analysis of nonlinear operators (Vol.\ 346). Springer.
\href{https://doi.org/10.1007/BFb0059360}{\nolinkurl{DOI:10.1007/BFb0059360}}

\bibitem{giltrud}
Gilbarg, D., \& Trudinger, N. S. (2001). Elliptic partial differential equations of second order (Vol.\ 224). Springer.
\href{https://doi.org/10.1007/978-3-642-61798-0}{\nolinkurl{DOI:10.1007/978-3-642-61798-0}}

\bibitem{GGP}
Godoy, T., Gossez, J.-P., \& Paczka, S. (2002). On the antimaximum principle for the $p$-Laplacian with indefinite weight. Nonlinear Analysis: Theory, Methods \& Applications, 51(3), 449-467.
\href{https://doi.org/10.1016/S0362-546X(01)00839-2}{\nolinkurl{DOI:10.1016/S0362-546X(01)00839-2}}

\bibitem{ilIzv} 
Il'yasov, Y. S. (2002). Non-local investigation of bifurcations of solutions of non-linear elliptic equations. Izvestiya: Mathematics, 66(6), 1103-1130.
\href{http://dx.doi.org/10.1070/IM2002v066n06ABEH000408}{\nolinkurl{DOI:10.1070/IM2002v066n06ABEH000408}}

\bibitem{ilNeh} Il'yasov, Y. (2017). On extreme values of Nehari manifold method via nonlinear Rayleigh's quotient. Topological Methods in Nonlinear Analysis, 49(2), 683-714. \href{http://dx.doi.org/10.12775/TMNA.2017.005}{\nolinkurl{DOI:10.12775/TMNA.2017.005}}

\bibitem{lieberman}
Lieberman, G. M. (1988). Boundary regularity for solutions of degenerate elliptic equations. Nonlinear Analysis: Theory, Methods \& Applications, 12(11), 1203--1219.
\href{http://dx.doi.org/10.1016/0362-546x(88)90053-3}{\nolinkurl{DOI:10.1016/0362-546x(88)90053-3}}

\bibitem{Lind}
Lindqvist, P. (1990). On the equation $\text{div}(|\nabla u|^{p-2} \nabla u) + \lambda |u|^{p-2}u=0$. Proceedings of the American Mathematical Society, 109(1), 157-164.
\href{https://doi.org/10.1090/S0002-9939-1990-1007505-7}{\nolinkurl{DOI:10.1090/S0002-9939-1990-1007505-7}}

\bibitem{M}
Mawhin, J. (2011). Partial differential equations also have principles: Maximum and antimaximum. Contemporary Mathematics, 540, 1-13.
\href{https://doi.org/10.1090/conm/540}{\nolinkurl{DOI:10.1090/conm/540}}

\bibitem{pokhozhaev1967}
Pokhozhaev, S. I. (1967). Solvability of nonlinear equations with odd operators. Functional Analysis and Its Applications, 1(3), 227-233.
\href{https://doi.org/10.1007/BF01076907}{\nolinkurl{DOI:10.1007/BF01076907}}

\bibitem{sweers}
Sweers, G. (1997). $L^n$ is sharp for the anti-maximum principle. Journal of Differential Equations, 134(1), 148-153.
\href{https://doi.org/10.1006/jdeq.1996.3211}{\nolinkurl{DOI:10.1006/jdeq.1996.3211}}

\bibitem{takac}
Tak\'a\v{c}, P. (2002). On the Fredholm alternative for the $p$-Laplacian at the first eigenvalue. Indiana University Mathematics Journal, 51(1), 187-237.
\url{http://www.jstor.org/stable/24902120}

\bibitem{TM}
Tanaka, M. (2012). The antimaximum principle and the existence of a solution for the generalized $p$-Laplace equations with indefinite weight. Differential Equations \& Applications, 4(4), 581-613.
\href{https://doi.org/10.7153/dea-04-34}{\nolinkurl{DOI:10.7153/dea-04-34}}

\bibitem{vaz}
V\'azquez, J. L. (1984). A strong maximum principle for some quasilinear elliptic equations. Applied Mathematics and Optimization, 12(1), 191-202.
\href{https://doi.org/10.1007/BF01449041}{\nolinkurl{DOI:10.1007/BF01449041}}

\bibitem{willem}
Willem, M. (1997). Minimax theorems (Vol. 24). Springer.
\href{https://doi.org/10.1007/978-1-4612-4146-1}{\nolinkurl{DOI:10.1007/978-1-4612-4146-1}}

\end{thebibliography}
\end{document}